\xdef\csname v\a\endcsname {
		{\noexpand\boldsymbol{\a}}
	}
\newcommand{\vone}{{\boldsymbol{1}}}
\newcommand{\vzero}{{\boldsymbol{0}}}
\newcommand{\nper}{{n_{\text{per}}}}
\theoremstyle{plain}
\newtheorem{theorem}{Theorem}[section]
\newtheorem{lemma}[theorem]{Lemma}
\newtheorem{prop}[theorem]{Proposition}
\newtheorem{corollary}[theorem]{Corollary}
\theoremstyle{remark}
\newtheorem{remark}{Remark}
\theoremstyle{assumption}
\newtheorem{assumption}{Assumption}
\theoremstyle{definition}
\newtheorem{definition}{Definition}[section]
\newtheorem{example}{Example}[section]
\begin{document}

\title[]{Networks of reinforced stochastic processes:\\ a complete
  description of the first-order asymptotics}

\author[G. Aletti]{Giacomo Aletti}
\address{ADAMSS Center,
  Universit\`a degli Studi di Milano, Milan, Italy}
\email{giacomo.aletti@unimi.it}

\author[I. Crimaldi]{Irene Crimaldi}
\address{IMT School for Advanced Studies Lucca, Lucca, Italy}
\email{irene.crimaldi@imtlucca.it}

\author[A. Ghiglietti]{Andrea Ghiglietti}
\address{Universit\`a degli Studi di Milano-Bicocca, Milan, Italy}
\email{andrea.ghiglietti@unimib.it (Corresponding author)}

\begin{abstract}
We consider a finite collection of reinforced stochastic processes
with a general network-based interaction among them.  We provide {\em
  sufficient and necessary} conditions in order to have some form of
{\em almost sure asymptotic synchronization}, which could be roughly
defined as the almost sure long-run uniformization of the behavior of
interacting processes. Specifically, we detect 
a regime of {\em complete} synchronization, where 
all the processes converge toward the same random variable, 
a second regime where the system almost surely converges, 
but there exists no form of almost sure asymptotic
synchronization, and another regime where the system does not converge
with a strictly positive probability. In this latter case,
partitioning the system in cyclic classes according to the period of
the interaction matrix, we have an almost sure
asymptotic synchronization within the
cyclic classes, and, with a strictly positive probability, an 
asymptotic periodic behavior of these classes.\\

\noindent {\em Key-words:} interacting random systems,
network-based dynamics, reinforced stochastic processes, urn models, 
spectral theory, synchronization, polarization, opinion dynamics. 
\end{abstract}

\maketitle

\section{Introduction}
\label{intro}

The random evolution of systems composed by agents who interact among
each other has always been of great interest in several scientific
fields. For example, many studies in Biology focus on the interactions
between different sub-systems. Moreover, Economics and Social sciences
deal with agents that take decisions under the influence of other
agents. In social life, opinions are partly transmitted by means of
various forms of social interaction and are driven by the tendency of
individuals to become more similar when they interact
(e.g. \cite{alb-bar, new, hof}). Sometimes, a collective behavior,
usually called ``synchronization'', reflects the result of the
interactions among different individuals (we refer to \cite{are} for a
detailed and well structured survey on this topic, rich of examples
and references). \\ 
\indent In particular, in mathematical
literature, there exists a growing interest in systems of {\em
  interacting urn models} (see the subsection below 
  devoted to a
literature review). The present work is placed in the stream of this
scientific literature, which is mainly motivated by the attempt of
understanding the role of the reinforcement mechanism in
synchronization phenomena.\\ \indent Specifically, the present work
deals with the class of the so-called {\em interacting reinforced
  stochastic processes} introduced in~\cite{ale-cri-ghi,
  cri-dai-lou-min}.  Generally speaking, by (self-)reinforcement in a
stochastic dynamics we mean any mechanism for which the probability
that a given event occurs has an increasing dependence on the number
of times that the same event occurred in the past. This {\em
  ``reinforcement mechanism''}, also known as {\em ``preferential
  attachment rule''} or {\em ``Rich get richer rule''} or {\em
  ``Matthew effect''}, is a key feature governing the dynamics of many
random phenomena in different scientific areas, such as Biology,
Economics and Social sciences (see e.g. \cite{pem} for a general
survey).  Formally, in~\cite{ale-cri-ghi}, it is given the following
definition.
\begin{definition}\label{def-single}\rm
  A Reinforced Stochastic Process (RSP) is a sequence $X=(X_n)_{n\geq 1}$ of
  random variables with values in $\{0,1\}$ such that the predictive mean 
  $$ Z_n=E[X_{n+1}\vert  Z_0, X_1,\ldots, X_n]=P(X_{n+1}=1 \vert  Z_0,
  X_1,\ldots, X_n),\quad n\geq 0,$$ satisfies the dynamics
$$
Z_{n+1}=(1-r_n)Z_n+r_nX_{n+1},\quad\mbox{where } 0<r_n<1,
$$
and $Z_{0}$ is a random variable with values in $[0,1]$. 
\end{definition}
We can image that the process $X$ describes a sequence of actions
along the time-steps and, if at time-step $n$, the ``action 1'' has
taken place, i.e. $X_n=1$, then for ``action 1'' the probability of
occurrence at time-step $(n+1)$ increases.  Therefore, the larger
$Z_{n-1}$, the higher the probability of having $X_n=1$, and so the
higher the probability of having $Z_n$ greater than $Z_{n-1}$. As a
consequence, the larger the number of times in which $X_k=1$ with
$1\leq k\leq n$, the higher the probability $Z_n$ of observing
$X_{n+1}=1$.  \\ \indent For example, if we consider the sequence
$(X_n)_n$ of extractions associated to a time-dependent P\'olya urn
(see \cite{pemantle-time-dependent, sidorova}), with $\alpha_n>0$
the number of balls (of the color equal to the extracted one) added to
the urn at each time-step, and $s_0$ the initial number of balls in
the urn, the proportion of balls of the color, say $A$, associated to
the value $1$ for $X_n$, is $Z_n= (Z_0+\sum_{k=1}^n \alpha_k
X_k)/(s_0+\sum_{k=1}^n \alpha_k)$ and so
 $$
 Z_{n+1}=(1-r_n)Z_n+r_n X_{n+1},\quad\mbox{with}\qquad
 r_n=\frac{\alpha_{n+1}}{s_0+\sum_{k=1}^{n+1} \alpha_k}\,.
 $$ (The standard Eggenberger-P\'{o}lya urn \cite{egg-pol,mah}
 corresponds to $\alpha_n=\alpha$ for each $n$).  Therefore, $(X_n)_n$
 is a RSP. But, it is also true the converse (see \cite{ale-cri-ghi-barriers} for the details). 
 Therefore, the two notions are equivalent from a purely mathematical aspect, although
the dynamics of a RSP in Definition~\ref{def-single} has the merit to highlight the key role
played by the sequence $(r_n)_n$. In the sequel, we will refer to
   $(r_n)_n$ as the {\em reinforcement sequence}, since it regulates
   the reinforcement in the dynamics of $(Z_n)_n$, that is the weight 
   of the ``new information" (i.e. $X_{n+1}$) used to define the new status of the
process (i.e. $Z_{n+1}$). As we will see, the reinforcement sequence will 
be pivotal in the obtained results.
\\ 
   
  \indent In the present work we are interested in a
 system of $N\geq 2$ reinforced stochastic processes that interact
 according to a given set of relationships among them. More precisely,
 we suppose to have a finite directed graph $G=(V,\, E)$, with
 $V=\{1,...,N\}$ as the set of vertices and $E\subseteq V\times V$ as
 the set of edges.  Each edge $(l_1,l_2)\in E$ represents the fact
 that the vertex $l_1$ has a direct influence on the vertex $l_2$. We
 also associate a weight $w_{l_1,l_2}\geq 0$ to each pair
 $(l_1,l_2)\in V\times V$ in order to quantify how much $l_1$ can
 influence $l_2$.  A weight equal to zero means that the edge is not
 present. We define the matrix $W$, called in the sequel {\em
   interaction matrix}, as $W=[w_{l_1,l_2}]_{l_1,l_2\in V\times V}$
 and we assume the weights to be normalized so that $\sum_{l_1=1}^N
 w_{l_1,l_2}=1$ for each $l_2\in V$.  Hence, $w_{l,l}$ represents how
 much the vertex $l$ is influenced by itself and $\sum_{l_1=1,l_1\neq
   l}^N w_{l_1,l}\in [0,1]$ quantifies how much the vertex $l$ is
 influenced by the other vertices of the graph.  Finally, we suppose
 to have at each vertex $l$ a reinforced stochastic process described
 by $(X_{n,l})_{n\geq 1}$ such that, for each $n\geq 0$, the random
 variables $\{X_{n+1,l}:\,l\in V\}$ take values in $\{0,1\}$ and are
 conditionally independent given ${\mathcal F}_{n}$ with
\begin{equation}\label{interacting-1-intro}
P(X_{n+1,l}=1\, \vert \, {\mathcal F}_n)=\sum_{l_1=1}^N w_{l_1,l} Z_{n,l_1},
\end{equation}
where, for each $l\in V$,
\begin{equation}\label{interacting-2-intro}
Z_{n,l}=(1-r_{n-1})Z_{n-1,l}+r_{n-1}X_{n,l},
\end{equation}
with $0\leq r_n<1$, $\{Z_{0,l}:\,l\in V\}$ random variables with
values in $[0,1]$ and ${\mathcal F}_n=\sigma(Z_{0,l}: l\in V)\vee
\sigma(X_{k,l}: 1\leq k\leq n,\,l\in V )$.\\ \indent To express the
above dynamics in a compact form, let us define the vectors
$\boldsymbol{X}_{n}=[X_{n,1},..,X_{n,N}]^{\top}$ and
$\boldsymbol{Z}_{n}=[Z_{n,1},..,Z_{n,N}]^{\top}$.  Hence, for $n\geq
0$, the dynamics described by \eqref{interacting-1-intro} and
\eqref{interacting-2-intro} can be expressed as follows:
\begin{equation}\label{eq:cond-mean}
E[\boldsymbol{X}_{n+1}\vert \mathcal{F}_{n}]=W^{\top}\,\boldsymbol{Z}_{n},
\end{equation}
and 
\begin{equation}\label{eq:dynamics}
\boldsymbol{Z}_{n+1}\ =
\left(1-r_{n}\right)\boldsymbol{Z}_{n}\ +\ r_{n}\boldsymbol{X}_{n+1}.
\end{equation}
Moreover, the assumption about the normalization of the matrix $W$ can
be written as $W^{\top}\boldsymbol{1}=\boldsymbol{1}$, where
$\boldsymbol{1}$ denotes the vector with all the entries equal to $1$.
\\ \indent In order to fix ideas, 
we can imagine that $G=(V, E)$ represents a
network of $N\geq 2$ individuals that at each time-step have to make a
choice between two possible actions $\{0,1\}$. For any $n\geq 1$, the
random variables $\{X_{n,l}:\,l\in V\}$ describe the actions adopted by the agents of the network 
 at time-step $n$; while each random variable $Z_{n,l}$ takes values in
$[0,1]$ and it can be interpreted as the ``personal inclination'' of
the agent $l$ of adopting ``action 1''. Thus, the probability that the
agent $l$ adopts ``action 1'' at time-step $(n+1)$ is given by a
convex combination of $l$'s own inclination and the inclination of the
other agents at time-step $n$, according to the ``influence-weights''
$w_{l_1,l}$ as in \eqref{interacting-1-intro}. Note that, from a
mathematical point of view, we can have $w_{l,l}\neq 0$ or
$w_{l,l}=0$. In both cases we have a reinforcement mechanism for the
personal inclinations of the agents: indeed, by
\eqref{interacting-2-intro}, whenever $X_{n,l}=1$, we have
a strictly positive increment in the personal
  inclination of the agent $l$, that is $Z_{n,l}> Z_{n-1,l}$, provided
  $Z_{n-1,l}<1$. However, only in the case $w_{l,l}>0$ (which is the
  most usual in applications), this fact results in a greater
  probability of having $X_{n+1,l}=1$ according to
  \eqref{interacting-1-intro}. Therefore, if $w_{l,l}>0$, then we have
  a ``true self-reinforcing'' mechanism; while, in the opposite case,
  we have a reinforcement property only in the own inclination of the
  single agent, but this does not affect the probability
  \eqref{interacting-1-intro} of the action taken by this
  agent.\\ 

\indent In the considered
setting, the main goals are:
\begin{itemize}
\item[(1)] to understand whether and when a (complete or partial)
  almost sure asymptotic synchronization (that could
    be roughly defined as 
    the propensity of interacting agents to uniformize their behavior) 
    can emerge;
\item[(2)] to discover and to characterize which regimes may appear
  when the complete almost sure asymptotic synchronization does not
  hold.
\end{itemize}
All the above goals are achieved by performing a detailed analysis on
the interplay between the asymptotic behavior of the system and the
properties of the interaction matrix $W$ and of the reinforcement
sequence $(r_n)_n$.\\ \indent First of all, the present paper provides 
the sufficient and necessary conditions in order to have the
almost sure asymptotic synchronization of the entire system ({\em
  complete almost sure asymptotic synchronization}), that is the
almost sure converge toward zero of all the differences
$(Z_{n,l_1}-Z_{n,l_2})_n$, with $l_1,\,l_2\in V$. Firstly, we observe
that, in the considered setting, the complete almost sure asymptotic
synchronization of the system is equivalent to the almost sure
convergence of all the RSPs $(Z_{n,l})_n$, with $l\in V$, toward a
certain common random variable $Z_\infty$. Then, under the assumption
that $W$ is irreducible (i.e.~$G=(V,E)$ is a strongly connected graph)
and $P(\vZ_0=\vzero)+P(\vZ_0=\vone)<1$\footnote{Similarly to the
  notation $\vone$ already mentioned above, the symbol $\vzero$
  denotes the vector with all the entries equal to $0$.} (in order to
exclude the trivial cases), we prove that:
\begin{itemize}
\item[(i)] when $W^{\top}$ is aperiodic, we have complete almost sure asymptotic
  synchronization if and only if $\sum_n r_n =+\infty$;
\item[(ii)] when $W^{\top}$ is periodic, we have complete almost sure asymptotic
  synchronization if and only if $\sum_n r_n(1-r_n) =+\infty$.
\end{itemize}
Therefore $\sum_n r_n=+\infty$ results to be a necessary conditions on the
reinforcement sequence $(r_n)_n$ for the complete almost sure asymptotic
synchronization. Indeed, when $\sum_n r_n<+\infty$,
all the stochastic processes 
$\{(Z_{n,l})_n:\,l\in V\}$ trivially converge almost surely,
but for any pair of distinct nodes 
we have a strictly positive probability of non-synchronization
in the limit.  More interesting is the regime when $W^{\top}$ is
periodic, $\sum_n r_n=+\infty$ and $\sum_n r_n(1-r_n)<+\infty$ 
(which actually means that $r_n$ is either stays very close to $0$ or very close to $1$). 
In this case, we
show that a {\em partial almost sure asymptotic synchronization} holds: 
indeed, the RSPs positioned at the network vertices almost surely synchronize in
the limit inside each cyclic class of $W^{\top}$, that is the
difference $(Z_{n,l_1}-Z_{n,l_2})_n$ converges almost surely to zero
whenever $l_1$ and $l_2$ belong to the same cyclic class, but the
convergence of $(Z_{n,l})_n$ is not guaranteed. Indeed, we prove that 
there exists a strictly positive probability that $(\vZ_n)_n$ does not converge, 
because of an asymptotic periodic behavior of the cyclic  classes 
(see Theorem~\ref{teo:PeriodicCycle}). Moreover, the limit set of each cyclic class 
is given by the barrier-set $\{0,1\}$, that is,  
for a large time-step, there are some cyclic classes in which the processes 
are very close to one and other classes in which the processes are very close to zero 
(see Theorem~\ref{th-limit-class-0-1}). Further, in this regime,  
we have a clockwise dynamics also for the agents' actions 
(i.e. for the random variables $X_{n,l}$) and   
their synchronization inside the cyclic classes at the clock-times (see again  Theorem~\ref{teo:PeriodicCycle}). 
The behavior we have in this last regime seems to 
well describe some social phenomena where the shift between different groups' polarizations is present 
along time.
\\
\indent  It is worthwhile to point out that these results have been achieved by means of a 
 suitable decomposition of $\vZ_n$ in terms of three components (see
Theorem~\ref{teo:sp-project}): the first related to the eigenvalue $1$
(i.e. the Perron-Frobenius or leading eigenvalue) of $W^\top$, the
second related to the cyclic classes of $W^\top$, that is to the
eigenvalues of $W^\top$, different from $1$, but with modulus equal to
one, and the third component related to the other eigenvalues of
$W^\top$, that is those with modulus strictly less than one. We study
the behavior of all the three components and how they affect the
behavior of the system (see Theorems~\ref{teo:thirdpart} and
Theorem~\ref{teo:secondpart}). The first-order asymptotic behavior of $(\vZ_ n)_n$ for the different regimes considered is completely
described and summarized in Table~\ref{tab:conv-synchro}.
\begin{table}[h]
\resizebox{\textwidth}{!}{
\begin{tabular}{|c||c|c|}
\hline
 & 
$\nper = 1$ & 
$\nper \geq 2$ \\
\hline
\hline
$\sum_n r_n < +\infty$ &
\begin{tabular}{c} $\bullet$ a.s. convergence \\ $\bullet$ non-a.s. synchro.  \\ (neither complete nor partial)\end{tabular} &
\begin{tabular}{c} $\bullet$ a.s. convergence \\ $\bullet$ non-a.s. synchro.  \\ (neither complete nor partial)\end{tabular} 
\\
\hline
\begin{tabular}{c} $\sum_n r_n(1-r_n) = +\infty$ \\ (and so $\sum_n r_n = +\infty$) \end{tabular} &
\begin{tabular}{c} $\bullet$ a.s. convergence \\ $\bullet$ complete a.s. synchro.\end{tabular} &
\begin{tabular}{c} $\bullet$ a.s. convergence \\ $\bullet$ complete a.s. synchro.\end{tabular} 
\\
\hline
\begin{tabular}{c} $\sum_n r_n = +\infty$ \\ $\sum_n r_n(1-r_n) < +\infty$ \end{tabular} &
\begin{tabular}{c} $\bullet$ a.s. convergence \\ $\bullet$ complete a.s. synchro.\end{tabular} &
\begin{tabular}{c} $\bullet$ non-a.s. convergence\\
(asymp. periodic behavior of\\ the cyclic classes) \\ $\bullet$ non-a.s. complete synchro.
  \\ $\bullet$ a.s. synchro. within \\ the cyclic classes \end{tabular} \\
\hline
\end{tabular}
}
\caption{Summary of the possible first-order 
asymptotic behavior of $(\vZ_n)_n$,
  under  the assumption $P(\vZ_0=\vzero)+P(\vZ_0=\vone)<1$. The symbol
  $\nper$ denotes the period of the matrix $W^{\top}$.}
\label{tab:conv-synchro}
\end{table}
We also discuss how the assumption of $W^\top\vone=\vone$ can be
relaxed through a natural generalization of the theory presented in
this work.  \\ 

\indent We also note that, when $(\vZ_n)_n$ almost surely converges toward a
random vector $\vZ_\infty$ (equal to $Z_\infty\vone$ in the case of
complete almost sure asymptotic synchronization), the average of times
in which the agents adopt ``action 1'', that is the vector of the
empirical means $N_{n,l}=\sum_{k=1}^nX_{k,l}/n$, with $l\in V$, almost
surely converges toward $W^\top\vZ_\infty$ (equal again to
$Z_\infty\vone$ in the case of complete almost sure asymptotic
synchronization). This result is important from an applicative point
of view, because the observable processes are typically the processes
$(X_{n,l})_n$ of the performed actions and not the processes
$(Z_{n,l})_n$ of the inclinations. Therefore, in a statistical
framework, the vector of the empirical means can be used as a strongly
consistent estimator of the random vector $W^\top \vZ_\infty$. When we
only have the almost sure asymptotic synchronization of the
inclinations $Z_{n,l}$ within the cyclic classes, we also have the
same form of asymptotic synchronization for the empirical means
$N_{n,l}$. \\ \indent Regarding the assumption of irreducibility, we
observe that when $W$ is reducible, using a suitable decomposition of
$W^\top$, we obtain a natural decomposition of the graph $G=(V,E)$ in
different sub-graphs $\{G_s;\,1\leq s\leq m\}\cup G_f$ (where $m$ is
the multiplicity of the eigenvalue $1$ of $W^\top$) such that:
\begin{itemize}
\item[(i)] the nodes in each sub-graph $G_s$ are
not influenced by the nodes of the rest of the network, and hence the
dynamics of the processes in $G_s$ can be fully established by
considering only the correspondent irreducible sub-matrix of $W^\top$ and so by
applying the results presented in the present paper to each sub-graph $G_s$;
\item[(ii)] the nodes in the sub-graph $G_{f}$ are affected by the
  behavior of the rest of the network and hence they have to be
  treated according to the specific situation considered.
\end{itemize} 
Finally, when the assumption of normalization for $W$, that
is $W^\top\vone = 1$, is not verified, the first-order asymptotic dynamics of the system 
become trivial: indeed, we have the compleyte almost sure asymptotic synchronization 
toward zero, as the result of the action of a "forcing input".


\subsection*{Literature review}
Regarding the previous literature, it is important to mention that
there are other works concerning models of interacting urns, that
consider reinforcement mechanism and/or interacting mechanisms
different from the model presented in this paper.  For instance, the
model studied in \cite{mar-val} describes a system of interacting
units, modeled by P\'olya urns, subject to perturbations and which
occasionally break down. The authors consider a system of interacting
P\'olya urns arranged on a $d$-dimensional lattice. Each urn contains
initially $b$ black balls and $1$ white ball. At each time step an urn
is selected and a ball is drawn from it: if the ball is white, a new
white ball is added to the urn; if it is black a ``fatal accident''
occurs and the urn becomes unstable and it ``topples'' coming back to
the initial configuration.  The toppling mechanism involves also the
nearby urns.\\ \indent In \cite{pag-sec} a class of discrete time
stochastic processes generated by interacting systems of reinforced
urns is introduced and its asymptotic properties analyzed. Given a
countable set of urns, at each time a ball is independently sampled
from every urn in the system and in each urn a random number of balls
of the same color of the extracted ball is added. The interaction
arises since the number of added balls depends also on the colors
generated by the other urns as well as on a common random factor.
\\ \indent In \cite{fortini} a notion of partially conditionally
identically distributed (c.i.d.) sequences has been studied as a form
of stochastic dependence, which is equivalent to partial
exchangeability in the presence of stationarity. A natural example of
partially c.i.d. construction is given by a countable collection of
stochastic processes with reinforcement, and possibly infinite state
space, with an interaction among them obtained by inserting in their
dynamics some stocastically dependent random weights. This example
contains as a special case the model in \cite{pag-sec}. \\ \indent
Interacting two-colors urns have been considered in \cite{lau2,
  lau1}. Their main results are proved when the probability of drawing
a ball of a certain color is proportional to $\rho^k$, where $\rho>1$
and $k$ is the number of balls of this color. The interaction is of
the \emph{mean-field} type. More precisely, the interacting
reinforcement mechanism is the following: at each step and for each
urn draw a ball from either all the urns combined with probability
$p$, or from the urn alone with probability $1-p$, and add a new ball
of the same color to the urn. The higher is the interacting parameter
$p$, the larger is the memory shared between the urns. The main
results can be informally stated as follows: if $p\geq 1/2$, then all
the urns fixate on the same color after a finite time, and, if
$p<1/2$, then some urns fixate on a unique color and others keep
drawing both colors.  \\ \indent In \cite{cir} the authors consider a
network of interacting urns displaced over a lattice. Every urn is
P\'olya-like and its reinforcement matrix is not only a function of
time (time contagion) but also of the behavior of the neighboring urns
(spatial contagion), and of a random component, which can represent
either simple fate or the impact of exogenous factors. In this way a
non-trivial dependence structure among the urns is built, and the
given construction is used to model different phenomena characterized
by cascading failures such as power grids and financial networks.
\\ \indent In \cite{ben, che-luc, lima} a graph-based model, with urns
at each vertex and pair-wise interactions, is considered. Given a
finite connected graph, an urn is placed at each vertex. Two urns are
called a pair if they share an edge. At discrete times, a ball is
added to each pair of urns. In a pair of urns, one of the urns gets
the ball with probability proportional to its current number of balls
raised by some fixed power $\alpha>0$. The authors characterize the
limiting behavior of the proportion of balls in the bins for different
values of the parameter $\alpha$.  \\ \indent In \cite{super-urn-1,
  super-urn-2, super-urn-3} another graph-based model, with P\'olya
urns at each vertex, is provided in order to model the diffusion of an
epidemics. Given a finite connected graph, an urn is placed at each
vertex and, in order to generate spatial infection among neighboring
nodes, instead of drawing solely from its own urn, each node draws
from a ``super urn'', whose composition is the union of the
composition of its own urn and of those of its neighbors' urns. The
stochastic properties and the asymptotic behavior of the resulting
network contagion process are analyzed.  \\ 
\indent In
\cite{cri-dai-min, dai-lou-min, sah} the authors consider interacting
urns (precisely, \cite{cri-dai-min, dai-lou-min} deal with standard
P\'olya urns and \cite{sah} regards Friedman's urns) in which the
interaction is of the mean-field type: indeed, the urns interact among
each other through the average composition of the entire system, tuned
by the interaction parameter $\alpha$, and the probability of drawing
a ball of a certain color is proportional to the number of balls of
that color. Asymptotic synchronization and central limit theorems for
the urn proportions have been proved for different values of the
tuning parameter $\alpha$, providing different convergence rates and
asymptotic variances. In \cite{cri-dai-lou-min} the same mean-field
interaction is adopted, but the analysis has been extended to the
general class of reinforced stochastic processes (in the sense of
Definition~\ref{def-single}), providing almost sure asymptotic
synchronization of the entire system and central limit theorems, also
in functional form, in the case $\lim_n n^{\gamma}r_n=c$ with $c>0$
and $1/2<\gamma\leq 1$.  Differently from these works, the model
proposed in \cite{ale-ghi} concerns with a system of generalized
Friedman's urns with irreducible mean replacement matrices based on a
general interaction matrix. Combining the information provided by the
mean replacement matrices and by the interaction matrix, first and
second-order asymptotic results of the urn proportions have been
established.  In this framework the non-synchronization is a natural
phenomenon since the mean replacement matrices are irreducible (and so
not diagonal) and are allowed to be different among the nodes, hence
the limits of the urn proportions are deterministic and possibly
different.  \\ \indent The paper \cite{ale-cri-ghi} joins the class of
reinforced stochastic processes studied in \cite{cri-dai-lou-min} with
the general interacting framework, driven by the interaction matrix,
adopted in \cite{ale-ghi}. After proving complete almost sure
asymp\-totic synchronization for an irreducible diagonalizable
interacting matrix and for $\lim_n n^{\gamma}r_n=c$ with $c>0$ and
$1/2<\gamma\leq 1$, \cite{ale-cri-ghi} provides the rates of
synchronization and the second-order asymptotic distributions, in
which the asymptotic variances have been expressed as functions of the
parameters governing the reinforced dynamics and the eigen-structure
of the interaction matrix. These results lead to the construction of
asymptotic confidence intervals for the common limit random variable
of the processes $\{(Z_{n,l})_n:\,l\in V\}$ and to the design of
statistical tests to make inference on the topology of the interaction
network given the observation of the processes $\{(Z_{n,l})_n:\, l\in
V \}$.  Finally, in \cite{ale-cri-ghi} the non-synchronization
phenomenon is discussed only as a consequence of the
non-irreducibility of the interaction matrix, which leads the system
to be decomposed in sub-systems of processes evolving with different
behaviors.  \\ 
\indent The previous quoted papers focus on the
asymptotic behavior of the stochastic processes of the personal
inclinations $\{(Z_{n,l})_n:\, l\in V \}$ of the agents, while
\cite{ale-cri-ghi-MEAN, ale-cri-ghi-WEIGHT-MEAN} study different
averages of times in which the agents adopt ``action 1'', i.e. the
stochastic processes of the empirical means and weighted empirical
means associated with the random variables $\{(X_{n,l})_n:\, l\in
V\}$.\\ 
\indent In addition, inspired by models for coordination games,
technological or opinion dynamics, the paper \cite{cri-lou-min} deals
with stochastic models where, even if a mean-field interaction among
agents is present, the absence of asymptotic synchronization may
happen due to the presence of an individual non-linear reinforcement.
\\
\indent Finally, in the recent paper \cite{KauSah21}, a model for 
interacting balanced urns is introduced and analyzed. The balanced 
reinforcement matrix is assumed the same for all the urns and the interaction 
among the urns is based on a finite directed graph (not weighted), 
in the sense that each urn positioned at a node of the graph reinforces all the urns 
in its out-neighbours. The authors show conditions on the reinforcement matrix, the topology of 
the graph and the initial configuration  in order to have the almost sure convergence of the system and 
a form of almost sure asymptotic synchronization. They also provide some fluctuation theorems.\\ 
\indent The present paper have some issues in common with
\cite{ale-cri-ghi}, although at the same time several significant
differences can be pointed out and, in particular, the intent of the
this work is different.  Indeed, here we are not only interested in
providing sufficient assumptions for the complete almost sure
asymptotic synchronization of the system and proving theoretical
results under these assumptions, but we aim at getting a complete
description of the first-order asymptotic behavior of the system,
which consists in finding sufficient and also necessary conditions on
the interaction matrix and on the reinforcement sequence for the
(complete or partial) almost sure asymptotic synchronization and also
describing the behavior of the system in the non-synchronization and
non-convergence regimes. In particular, it is very important to
underline that, with respect to \cite{ale-cri-ghi}, in the present
paper:
\begin{itemize}
\item[(1)] we eliminate the technical assumption that the interaction
  matrix is diagonalizable (indeed, this condition is difficult to be
  checked in practical applications and so it may lead to consider the
  symmetric interaction matrices as the only class of ``applicable''
  matrices);
\item[(2)] the results provided here include not only the case when
  $\lim_n n^{\gamma}r_n=c$ with $c>0$ and $1/2<\gamma\leq 1$
  considered in the previous papers mentioned above, but also the case
  with $0<\gamma\leq 1/2$;
\item[(3)] we do not limit ourselves to the case of reinforcement sequences
  $(r_n)_n$ such that $\lim_n n^{\gamma}r_n=c$ with $0<\gamma\leq 1$
  (indeed, as we will show, this assumption always implies the
  convergence of the system and so it excludes other possible behaviors
  that the system can present);
\item[(4)] we describe the behavior of the system in the regimes where
  the complete almost sure asymptotic synchronization does not hold
  (this includes the regime with almost sure convergence without any
  form of almost sure synchronization and the regime with a periodic
  dynamics and a partial almost sure synchronization, specifically
  synchronization within the cyclic classes).
\end{itemize}

\indent Regarding the methodology, we point out that there exists a vast literature 
where the asymptotics for discrete-time processes, in particular urn processes, 
are proven through ordinary differential equation (ODE) method and 
stochastic approximation theory
that may seem to be applicable to the dynamics considered in this work as well 
(see e.g \cite{benaim-sem, Sch01} and references therein). 
However, with these techniques it would be impossible to fully characterize all the 
possible regimes shown by the system.
For instance, the non-convergent periodic dynamics we present in Theorem \ref{teo:PeriodicCycle}
is not contemplated in an ODE framework,
which shows that this theory fails in our context.


\subsection*{Structure of the paper} The sequel of the paper is so structured.
In Section \ref{sec:decomp-conv-sincro} we suitably decompose the
process $(\vZ_n)_n$ and, after characterizing the (complete or
partial) almost sure asymptotic synchronization of the system in terms
of the components of this decomposition, we provide sufficient and
necessary conditions related with this phenomenon. In
Section~\ref{sec:non-sincro} we deal with the regimes where the
complete almost sure asymptotic synchronization of the system is not
guaranteed, discussing the case of almost sure convergence without any
form of almost sure asymptotic synchronization and the case of
non-almost sure convergence (nor complete synchronization) of the
system.
Section~\ref{sec:additional} contains a remark on the behavior
of the (weighted) empirical means associated to the random variables
$\{(X_{n,l})_n:\, l\in V\}$, a discussion on how to handle the case of
a reducible interaction matrix and a final comment on
  what happens when the assumption of normalization for $W$, that is
  $W^\top\vone=\vone$, is not verified. 
  Finally, Section~\ref{sec:proofs}
  contains the main ideas and the sketches of the proofs of the results stated in the present work. 
  All the details of the proofs can be found in a separate supplementary material
\cite{ale-cri-ghi-supplSPA1}, along with some recalls and auxiliary results.


\section{Almost sure asymptotic synchronization}
\label{sec:decomp-conv-sincro}

Consider a system of $N\geq 2$ Reinforced Stochastic Processes (RSPs)
with a network-based interaction as defined
in~\eqref{interacting-1-intro} and \eqref{interacting-2-intro} or,
equivalently, in~\eqref{eq:cond-mean} and \eqref{eq:dynamics}, where
the matrix $W$ has non-negative entries, is irreducible (i.e.~the
underlying graph~$G=(V,E)$ is strongly connected) and is such that
$W^{\top}\boldsymbol{1}=\boldsymbol{1}$ (where $\vone$ denotes the
vector with all the entries equal to $1$).\\

In this section, we use the assumed properties of $W^\top$ in order to
decompose the process $(\vZ_n)_n$ into three components. We will claim
that the behavior of these components depends only on the summability
of the sequences $(r_n)_n$ and $(r_n(1-r_n))_n$, according to the period of
$W^\top$.  To this end, we recall that the Perron-Frobenius
Theorem ensures that:
\begin{enumerate}
\item the eigenvalue $1$ of $W^\top$ has multiplicity one and
it is called the {\em Perron-Frobenius or leading eigenvalue},
as all the other eigenvalues have real part strictly less than 1. 
Moreover, there exists a (unique) left eigenvector
  $\vv $ associated to the leading eigenvalue of $W^\top$ with all the
  entries in $(0,+\infty)$ and such that $\vv^{\top}\boldsymbol{1}=1$;
\item if $W^\top$ is periodic with period ${\nper} \geq 2$, 
\begin{itemize}
\item[(i)] all the complex ${\nper}$-roots of the unity are eigenvalues 
  of $W^\top$ with multiplicity one;
\item[(ii)] there exists an equivalence relationship $\sim_c$ that
  separates the set of vertices $V=\{1,\ldots,N\}$ (i.e. the processes
  $\{(Z_{n,l})_n:\,l\in V\}$) into $\nper$ classes of equivalence,
  numbered from $0$ to $\nper-1$ and called \emph{cyclic classes}, such
  that $[W^\top]_{l_1,l_2}=0$ when $l_1$ belongs to the $h$-th cyclic
  class and $l_2$ does not belong to the $(h+1)$-th cyclic class
  (the numbers of the classes being defined modulus $\nper$). They
  correspond to the communicating classes of the matrix
  $(W^{\top})^{\nper}$;
\end{itemize}
\item except the complex ${\nper}$-roots of the unity, all the other
  eigenvalues have modulus strictly less than $1$.
\end{enumerate}

\indent The \textbf{first component of the decomposition} of the
process $(\vZ_n)_n$ concerns the eigenspace associated to the
Perron-Frobenius eigenvalue.  More precisely, the spectral
non-orthogonal projection of $W^\top$ corresponding to the leading
eigenvalue is the matrix $\vone \vv^\top$, that we apply to $\vZ_n$ to
obtain the process
\begin{equation}\label{def:Z_1}
{\vZ}_n^{(1)} = \vone \vv^\top \vZ_n = \widetilde{Z}_n \vone ,
\end{equation}
where $(\widetilde{Z}_n )_n$ is the bounded martingale defined as
$\widetilde{Z}_n=\vv^{\top}\vZ_n$ for each $n$ and with dynamics
\begin{equation}\label{Z-tilde-dynamics}
\widetilde{Z}_0=\boldsymbol{v}^{\top}\boldsymbol{Z}_0,\qquad
\widetilde{Z}_{n+1}=(1-r_n)\widetilde{Z}_n+r_nY_{n+1},
\end{equation}
where $Y_{n+1}=\boldsymbol{v}^\top\boldsymbol{X}_{n+1}$ takes values
in $[0,1]$ and $E[Y_{n+1}\vert {\mathcal F}_n]=\boldsymbol{v}^\top
W^{\top}\boldsymbol{Z}_n=\widetilde{Z}_n$.  This martingale plays a
central role: indeed, whenever all the stochastic processes
$\{(Z_{n,l})_n,\, l\in V\}$ converge almost surely to the same limit random
variable, say $Z_\infty$, we trivially have that $Z_\infty$ coincides
with the almost sure limit of $(\widetilde{Z}_n)_n$.
\\

\indent The \textbf{second component of the decomposition} of the
process $(\vZ_n)_n$ exists only when $\nper\geq 2$ and it concerns the
cyclic classes.
Specifically, the process ${\vZ}_n^{(2)}$ is defined as
\begin{equation}\label{def:Z_2}
{\vZ}_n^{(2)} = {\vZ}_n^{(C)} - \widetilde{Z}_n \vone,
\qquad\mbox{where}\qquad
{Z}_{n,l}^{(C)} = 
\sum_{l_1 \sim_c l} \frac{v_{l_1}}{
\sum_{l_2 \sim_c l} v_{l_2}
} Z_{n,l_1}, 
\qquad l \in V.
\end{equation}
Note that the process ${\vZ}_n^{(C)}$, and so ${\vZ}_n^{(2)}$, is
constant on each cyclic class as the terms $\sum_{l_1 \sim_c l}
v_{l_1}Z_{n,l_1}$ and $\sum_{l_2 \sim_c l} v_{l_2}$ are the same for
all the vertices $l$ in the same cyclic class. We will prove that this
part may be defined by means of the spectral non-orthogonal projection
on the eigenspaces corresponding to the complex $(\nper-1)$ roots of the 
unity different from $1$ (see Theorem~\ref{teo:sp-project}).  \\

\indent The \textbf{third component of the decomposition} of the
process $(\vZ_n)_n$ is the remaining part and it concerns the spectral
non-orthogonal projection on the eigenspaces corresponding to the
roots of the characteristic polynomial that have modulus strictly less
than $1$ (see Theorem~\ref{teo:sp-project}).  \\

\indent The full decomposition then reads
\begin{equation}\label{eq:decomposZ}
\vZ_n = 
{\vZ}_{n}^{(1)} +
{\vZ}_{n}^{(2)} +
{\vZ}_{n}^{(3)} .
\end{equation}

Analyzing separately these three components, we will obtain sufficient
and necessary conditions for the (complete or partial) almost sure
asymptotic synchronization of the system, in the sense of the
following definition:

\begin{definition}[Complete or partial almost sure asymptotic synchronization]
  \label{def-sincro}  
Given a system of $N\geq 2$ interacting RSPs as defined above, we say
that we have the {\em complete almost sure asymptotic synchronization}
of the system (or {\em the entire system almost surely synchronizes in
  the limit}) if, for each pair $(l_1,l_2)$ of indices in
$V=\{1,\dots,N\}$, we have
\begin{equation}\label{eq:def-synchro}
(Z_{n,l_2}-Z_{n,l_1})\stackrel{a.s.}\longrightarrow 0.
\end{equation}
We say that we have the {\em almost sure asymptotic synchronization}
of the system {\em within each cyclic class} (and so a {\em partial}
almost sure asymptotic synchronization of the system) if
\eqref{eq:def-synchro} holds true for each pair $(l_1,l_2)$ of indices
such that $l_1\sim_c l_2$, i.e~belonging to the same cyclic class. (Of
course this partial synchronization is interesting only in the case
$\nper\neq N$, that is when at least one cyclic class has more than
one element.)
\end{definition}

Since, by definition, the term ${\vZ}_{n}^{(3)}$ is the only one of
the three parts of the decomposition that differs between the
processes within the same cyclic class, we have the characterization
of the almost sure asymptotic synchronization of the system inside
each cyclic class as the almost sure convergence toward zero of
$(\vZ_n^{(3)})_n$. In the next theorem we state that the three
components of the decomposition of the process $(\vZ_n)_n$ are
linearly independent and so we get the characterization of the
complete almost sure asymptotic synchronization of the system as the
almost sure convergence toward zero of $(\vZ_n^{(2)})_n$ and of
$(\vZ_n^{(3)})_n$. The proofs of the following and the other results
of this section are collected more ahead in Section~\ref{sec:proofs}.

\begin{theorem}\label{teo:sp-project}
Let ${\vZ}_{n}^{(1)} ,
{\vZ}_{n}^{(2)} ,
{\vZ}_{n}^{(3)} 
$ as in
\eqref{eq:decomposZ}.
Then, we have\footnote{For the cases $\nper=1$ and $\nper=N$, we use
  the usual convention $\sum_{j=1}^0\dots=0$.}
\begin{equation*}
  \begin{split}
  {\vZ}_{n}^{(1)} &= \vone \vv^\top \vZ_n
  \in Span\{\vone\},
\qquad
{\vZ}_{n}^{(2)} = 
\sum_{j=1}^{\nper-1} \vq_{j}\vv_j^\top\vZ_n
\in Span\{\vq_1,\dots,\vq_{\nper-1}\},
\\
{\vZ}_{n}^{(3)} &= 
\sum_{j=1}^{N-\nper} \vr_{j} \vp_{j}^\top\vZ_n
\in Span\{\vr_1,\dots,\vr_{N-\nper}\},
  \end{split}
  \end{equation*}
 where $\vq_1,\dots,\vq_{\nper-1}$ (resp. $\vv_1,\dots,
 \vv_{\nper-1}$) are right (resp. left) eigenvectors of $W^\top$
 related to the eigenvalues with modulus equal to $1$, but different
 from $1$ (i.e. the complex $\nper$-roots of the unit different from
 $1$) and $\vr_1,\dots,\vr_{\nper-1}$
 (resp. $\vp_1,\dots,\vp_{N-\nper}$) are right (resp. left), possibly
 generalized, eigenvectors of $W^\top$ related to the eigenvalues with
 modulus strictly smaller than $1$.  \\ \indent As a consequence, the
 three components $\vZ_n^{(i)}$, $i=1,2,3$, are linear independent and
 so the complete almost sure asymptotic synchronization of the system
 holds if and only if both ${\vZ}_{n}^{(2)}$ and ${\vZ}_{n}^{(3)}$
 converge almost surely toward zero.
\end{theorem}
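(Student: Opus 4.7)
The plan is to identify each component $\vZ_n^{(i)}$ with the image of $\vZ_n$ under a spectral projector of $W^\top$ and then read off the stated claims. Partition the spectrum of $W^\top$ into the three disjoint groups $\sigma_1=\{1\}$, $\sigma_2=\{\zeta_1,\dots,\zeta_{\nper-1}\}$ (the complex $\nper$-th roots of unity different from $1$) and $\sigma_3$ (the eigenvalues of modulus strictly less than~$1$). The Jordan decomposition of $W^\top$ yields three complementary spectral projectors $P_1,P_2,P_3$ with $P_iP_j=\delta_{ij}P_i$ and $P_1+P_2+P_3=I$; since Perron--Frobenius makes $1$ and each $\zeta_j$ simple, one has $P_1=\vone\vv^\top$ and $P_2=\sum_{j=1}^{\nper-1}\vq_j\vv_j^\top$, with right/left eigenvector pairs normalized so that $\vv_j^\top\vq_k=\delta_{jk}$, whereas $P_3=\sum_{j=1}^{N-\nper}\vr_j\vp_j^\top$ may involve generalized eigenvectors. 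The identity $P_1\vZ_n=\widetilde{Z}_n\vone=\vZ_n^{(1)}$ is then immediate from $\widetilde{Z}_n=\vv^\top\vZ_n$.

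The main obstacle I anticipate is matching the explicit formula for $\vZ_n^{(C)}$ in~\eqref{def:Z_2} with the spectral projector $P_1+P_2$. I would pass to $(W^\top)^{\nper}$: by the cyclic class structure recalled at the beginning of Section~\ref{sec:decomp-conv-sincro}, after reordering the nodes by cyclic class, $(W^\top)^{\nper}$ becomes block-diagonal with diagonal blocks $W_C^\top$ indexed by the cyclic classes $C$, each an irreducible aperiodic stochastic matrix on $C$. From $\vv^\top W^\top=\vv^\top$ and the block structure, $(v_l/V_C)_{l\in C}$ with $V_C=\sum_{l\in C}v_l$ is the unique stationary distribution $\boldsymbol{\mu}_C$ of $W_C^\top$, so the aperiodic Perron--Frobenius theorem gives $(W_C^\top)^k\to \vone_C\boldsymbol{\mu}_C^\top$ and hence the spectral projector of $(W^\top)^{\nper}$ onto its eigenvalue $1$ is the block-diagonal matrix $M$ whose blocks are $\vone_C\boldsymbol{\mu}_C^\top$. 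A direct computation yields $(M\vZ_n)_l=\sum_{l_2\sim_c l}(v_{l_2}/V_C)Z_{n,l_2}=Z_{n,l}^{(C)}$, so $M\vZ_n=\vZ_n^{(C)}$. On the other hand, because $\lambda^{\nper}=1$ iff $\lambda$ is a $\nper$-th root of unity and each such $\lambda$ is a simple eigenvalue of $W^\top$, the eigenspace of $(W^\top)^{\nper}$ for the eigenvalue $1$ coincides with $\mathrm{Range}(P_1)\oplus\mathrm{Range}(P_2)$, so $M=P_1+P_2$. Combining, $\vZ_n^{(2)}=\vZ_n^{(C)}-\widetilde{Z}_n\vone=P_2\vZ_n$, and then \eqref{eq:decomposZ} forces $\vZ_n^{(3)}=P_3\vZ_n$, which lies in $\mathrm{Span}\{\vr_1,\dots,\vr_{N-\nper}\}$.

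Linear independence of the three components is then immediate from the direct sum $\mathbb{C}^N=\mathrm{Range}(P_1)\oplus\mathrm{Range}(P_2)\oplus\mathrm{Range}(P_3)$. For the final equivalence: if complete almost sure synchronization holds, then $\vZ_n\to Z_\infty\vone$ a.s.\ for some random $Z_\infty$; applying $\vv^\top$ and using $\vv^\top\vone=1$ yields $\widetilde{Z}_n\to Z_\infty$ a.s., hence $\vZ_n^{(1)}\to Z_\infty\vone$ and $\vZ_n^{(2)}+\vZ_n^{(3)}\to\vzero$ a.s., whence the continuous linear maps $P_2$ and $P_3$ applied to this sum separately give $\vZ_n^{(2)}\to\vzero$ and $\vZ_n^{(3)}\to\vzero$ a.s. Conversely, if both $\vZ_n^{(2)}$ and $\vZ_n^{(3)}$ converge almost surely to $\vzero$, the bounded-martingale convergence of $(\widetilde{Z}_n)_n$ to some random $Z_\infty$, noted after \eqref{Z-tilde-dynamics}, combined with $\vZ_n=\vZ_n^{(1)}+\vZ_n^{(2)}+\vZ_n^{(3)}$ yields $\vZ_n\to Z_\infty\vone$ a.s., i.e.\ complete almost sure asymptotic synchronization.
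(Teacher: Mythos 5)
Your proof is correct, but it establishes the key identification $\vZ_n^{(C)}=(P_1+P_2)\vZ_n$ by a genuinely different route than the paper. The paper proceeds by explicit computation: it first derives closed formulas for the peripheral left and right eigenvectors (namely $[\vv_j^\top]_{l}=v_{l}\lambda_{1,j}^{-h}$ and $[\vq_j]_{l}=\lambda_{1,j}^{h}$, with $h$ the cyclic class of $l$), proves that $\sum_{l\in C}v_l=1/\nper$ for every cyclic class, and then evaluates the matrix $\vone\vv^\top+\sum_{j=1}^{\nper-1}\vq_j\vv_j^\top$ entrywise via the geometric sum of the roots of unity, finding that its $(l_1,l_2)$ entry is $\nper v_{l_2}$ if $l_1\sim_c l_2$ and $0$ otherwise, which matches the definition \eqref{def:Z_2}. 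You instead pass to $(W^\top)^{\nper}$: you use the block-diagonal cyclic-class structure, identify the normalized restriction of $\vv$ to each class as the stationary law of the corresponding irreducible aperiodic stochastic block, let powers converge to identify the eigenvalue-one spectral projector of $(W^\top)^{\nper}$ with the block matrix $M$, and conclude $M=P_1+P_2$ by spectral mapping. Both arguments are sound; yours is softer and avoids all explicit computation (you do not even need the mass lemma, since \eqref{def:Z_2} already carries its own normalization), while the paper's explicit eigenvector formulas are not only a means to this theorem but are reused later (e.g.\ in the proofs of Theorem~\ref{teo:secondpart} and Lemma~\ref{lem:modulusSpectral}), so the paper obtains them here ``for free'' whereas your route would still have to establish them separately for those results. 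Two small points to tighten: when deducing $M=P_1+P_2$, note that the two projections share not only the same range but also the same kernel (the sum of the generalized eigenspaces of $W^\top$ for the modulus-$<1$ eigenvalues, which is the complementary spectral subspace of $(W^\top)^{\nper}$); and in the forward direction of the final equivalence, the assertion that synchronization forces $\vZ_n\to Z_\infty\vone$ deserves the one-line justification $Z_{n,l}-\widetilde Z_n=\sum_{l'}v_{l'}(Z_{n,l}-Z_{n,l'})\stackrel{a.s.}\to 0$ together with the bounded-martingale convergence of $(\widetilde Z_n)_n$; alternatively, as in the paper, one can bypass $Z_\infty$ entirely by observing that synchronization is equivalent to $\vZ_n-\widetilde Z_n\vone=\vZ_n^{(2)}+\vZ_n^{(3)}\stackrel{a.s.}\to\vzero$ and then applying the projectors $P_2$ and $P_3$.
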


Before analyzing the behavior of the three terms ${\vZ}_n^{(i)}$,
$i=1,2,3$, in the general case, let us first consider the trivial
scenario in which initially all the processes $\{Z_{n,l}:\, l\in V\}$
start from the same barrier ($0$ or $1$) with probability one, that
is~$P(T_0)=1$ with $T_0 = \{\vZ_0=\vzero\} \cup \{\vZ_0=\vone\}$,
where, similarly to $\vone$, the symbol $\vzero$ denotes the vector
with all the entries equal to zero. Since $\{\vZ_0=\vzero\}$ implies
$\{\vZ_n=\vzero,\ \forall n\geq 0\}$ (and, analogously,
$\{\vZ_0=\vone\}$ implies $\{\vZ_n=\vone,\ \forall n\geq 0\}$), when
$P(T_0)=1$, we have ${\vZ}_{n}^{(2)}\equiv{\vZ}_{n}^{(3)}\equiv
\vzero$ and ${\vZ}_{n}^{(1)}\equiv{\vZ}_{n}\equiv\vZ_0$ with
probability one, and trivially the process $(\vZ_n)_n$ converges
almost surely and the entire system almost surely synchronizes.  For
this reason, {\em from now on we will consider the non-trivial initial
  condition $P(T_0)<1$}.  \\

\indent Consider now the first term, i.e. ${\vZ}_{n}^{(1)}$. Since
$(\widetilde{Z}_n)_n$ is a bounded martingale and so convergent almost
surely to a random variable, from Definition \ref{def-sincro} and
Theorem~\ref{teo:sp-project}, we get that the entire system almost
surely synchronizes in the limit if and only if there exists a random
variable $Z_\infty$, taking values in $[0,1]$, such that
$$
\vZ_n\stackrel{a.s.}\longrightarrow Z_\infty\vone.
$$ In other words, the complete almost sure asymptotic synchronization
of the system implies the almost sure convergence of all the
stochastic processes $\{(Z_{n,l})_n:\, l\in V\}$ to the same limit
random variable $Z_\infty$.  \\

\indent We now investigate the conditions that characterize the regime
of (complete or partial) almost sure asymptotic synchronization.  In
particular, in the next results we will show that, whenever $\sum_n
r_n<+\infty$, there is a strictly positive probability that
${\vZ}_{n}^{(2)}$ or ${\vZ}_{n}^{(3)}$ do not vanish
asymptotically. This fact, since Theorem~\ref{teo:sp-project}, means
that the system does not synchronize (completely nor partially) with
probability one.  An heuristic explanation of why this occurs when
$\sum_n r_n<+\infty$ is that, when at time $n_0$ the remaining series
$\sum_{n>n_0} r_n$ gets very low, the single processes
$\{(Z_{n,l})_n:\, l\in V\}$ cannot converge too far from their values
$\{Z_{n_0,l}:\, l\in V\}$, respectively, and this avoids them to
converge to the same limit almost surely when their distance at time
$n_0$ can be large with strictly positive probability.\\

\indent Let us now present the result regarding the third
component ${\vZ}_{n}^{(3)}$. 

\begin{theorem}\label{teo:thirdpart}
  When $\nper=N$, we have
  ${\vZ}_{n}^{(3)}\equiv \vzero$.  When $\nper<N$, we have
  ${\vZ}_{n}^{(3)}\stackrel{a.s.}\longrightarrow\vzero$ if and only if
  $\sum_n r_n = +\infty$.
\end{theorem}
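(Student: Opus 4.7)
My plan is to work on the invariant subspace $V^{(3)} := \mathrm{Im}(P^{(3)})$, where $P^{(3)} := \sum_{j=1}^{N-\nper} \vr_j \vp_j^\top$ is the spectral projection provided by Theorem~\ref{teo:sp-project}. The case $\nper = N$ is immediate from the empty-sum convention. For $\nper < N$, since $P^{(3)}$ commutes with $W^\top$, applying it to the dynamics and decomposing $\vX_{n+1} = W^\top\vZ_n + \Delta \vM_{n+1}$, with $(\Delta\vM_{n+1})_n$ a bounded martingale-difference sequence, gives the closed recursion on $V^{(3)}$,
\begin{equation*}
\vZ_{n+1}^{(3)} \,=\, A_n\vZ_n^{(3)} + r_n P^{(3)}\Delta\vM_{n+1},
\qquad A_n := (1-r_n)I + r_n W_3,
\end{equation*}
where $W_3$ is the restriction of $W^\top$ to $V^{(3)}$ and has every eigenvalue of modulus strictly less than $1$. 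The key contraction bound, obtained by direct computation for $r_n \in [0,1)$, is $|1-r_n(1-\lambda)|^2 \leq 1-r_n(1-|\lambda|^2)$ for every eigenvalue $\lambda$ of $W_3$; choosing a norm $\|\cdot\|_\star$ on $V^{(3)}$ in which $\|W_3\|_\star < 1$ (which exists since its spectral radius is less than $1$) then gives $\|A_n\|_\star \leq 1 - \epsilon r_n$ for some $\epsilon > 0$.

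For sufficiency ($\Leftarrow$), assume $\sum_n r_n = +\infty$. Unrolling the recursion yields
\begin{equation*}
\vZ_n^{(3)} = U_n\vZ_0^{(3)} + \sum_{k=0}^{n-1} r_k\,U_{n,k+1}\,P^{(3)}\Delta\vM_{k+1},
\qquad U_{n,k} := \prod_{j=k}^{n-1} A_j,
\end{equation*}
and the bound $\|U_{n,k}\|_\star \leq \exp(-\epsilon \sum_{j=k}^{n-1} r_j)$ forces the deterministic part to vanish. For the stochastic part, I would apply a Robbins--Siegmund-type argument to the Lyapunov function $V_n := \|\vZ_n^{(3)}\|_\star^2$: taking conditional expectations, the cross term vanishes by the martingale-difference property and the boundedness of $\vX_{n+1}$ yields
\begin{equation*}
E[V_{n+1}\mid\mathcal{F}_n] \,\leq\, (1-\epsilon r_n)V_n + C r_n^2,
\end{equation*}
whence, using boundedness of $\vZ_n^{(3)}$ together with $\sum_n r_n = +\infty$, one concludes $V_n \to 0$ almost surely.

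For necessity ($\Rightarrow$), assume $\sum_n r_n < +\infty$. The increments $\|\vZ_{n+1}-\vZ_n\| \leq r_n\|\vX_{n+1}-\vZ_n\|=O(r_n)$ are summable, so $\vZ_n^{(3)} \to \vZ_\infty^{(3)}$ almost surely. To show $P(\vZ_\infty^{(3)}\neq\vzero) > 0$, I would take conditional expectations in the recursion and pass to the limit to get $E[\vZ_\infty^{(3)}\mid\mathcal{F}_0] = U_\infty \vZ_0^{(3)}$, where $U_\infty := \lim_n U_n$ is invertible on $V^{(3)}$ because, under $\sum_n r_n<+\infty$, every factor $1-r_n(1-\lambda)$ stays bounded away from $0$ and $\prod_n|1-r_n(1-\lambda)| > 0$. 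If $P(\vZ_0^{(3)}\neq\vzero)>0$, invertibility of $U_\infty$ already yields $P(\vZ_\infty^{(3)}\neq\vzero) > 0$. Otherwise $\vZ_0^{(3)} = \vzero$ almost surely, i.e.\ $\vZ_0 \in V^{(1)}\oplus V^{(2)}$ is constant on each cyclic class; then the non-vanishing of $\vZ_\infty^{(3)}$ has to come from the martingale part, and the assumption $P(T_0) < 1$ is used to produce a positive-probability event on which some component of $\vZ_n$ lies in $(0,1)$, so that the conditional covariance of $P^{(3)}\Delta \vM_{n+1}$ is non-degenerate and contributes non-trivially to the limit.

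The hard part will be this last case of the necessity argument: translating the non-triviality condition $P(T_0) < 1$ into a strictly positive lower bound on the conditional variance of the martingale limit within $V^{(3)}$. Specifically, when $\vZ_0^{(3)}=\vzero$ almost surely, $P(T_0)<1$ forces, with positive probability, either $\vZ_0 = c\vone$ with $c\in(0,1)$ (yielding $\mathrm{Var}(X_{1,l}\mid\mathcal{F}_0) = c(1-c) > 0$ at every vertex) or $\vZ_0$ taking the values $0$ and $1$ on different cyclic classes (in which case $\vZ_1\notin\{\vzero,\vone\}$, so the conditional covariance at the next step becomes non-zero); in either situation one shows that $P^{(3)}$ does not annihilate the resulting covariance, and under $\sum_n r_n<+\infty$ the processes do not drift quickly toward the barriers, so this positive-variance contribution survives in $\vZ_\infty^{(3)}$.
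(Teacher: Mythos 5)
Your plan has the right skeleton (the case $\nper=N$, the closed recursion on the slow eigenspace, the norm with $\Vert W_3\Vert_\star<1$, which plays the same role as the paper's rescaled Jordan basis), but the sufficiency argument has a genuine gap. You bound the conditional second moment of the noise by a \emph{constant} times $r_n^2$, arriving at $E[V_{n+1}\mid\mathcal F_n]\le(1-\epsilon r_n)V_n+Cr_n^2$, and then invoke a Robbins--Siegmund-type conclusion. This only works when $\sum_n r_n^2<+\infty$, which is \emph{not} assumed: the theorem claims the equivalence under the sole condition $\sum_n r_n=+\infty$, which allows $\sum_n r_n^2=+\infty$ (e.g.\ $r_n\equiv r$ constant). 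In that regime your recursion does not force $V_n\to0$ (with $r_n\equiv r$ it only yields $\limsup_n V_n\lesssim Cr/\epsilon$), and the almost-supermartingale convergence also fails because the perturbation $Cr_n^2$ is not summable. The paper's proof hinges on a sharper bound: the conditional variance is at most $\hat q\,r_n^2V_n$ with $V_n=(\vone-W^\top\vZ_n)^\top W^\top\vZ_n$, and Lemma~\ref{lemma-tec} shows $\sum_n r_n^2V_n<+\infty$ a.s.\ and $\sum_n r_n^2E[V_n]<+\infty$, by comparing $r_n^2V_n$ with the increments of the predictable compensator of the bounded martingale $\widetilde Z_n=\vv^\top\vZ_n$ (using $E[(Y_{n+1}-\widetilde Z_n)^2\mid\mathcal F_n]\ge v_{\min}^2V_n$). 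Without this self-normalizing estimate your argument only covers the ``standard'' square-summable case, which is strictly weaker than the statement.

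The necessity direction is also incomplete: the branch you yourself flag as hard (when $\vZ_0^{(3)}=\vzero$ a.s.) is left as a plan, and the assertion that the injected variance ``survives'' in $\vZ_\infty^{(3)}$ is precisely what needs proving; moreover your case split there is not exhaustive, since $\vZ_0^{(3)}=\vzero$ a.s.\ only means $\vZ_0$ is constant on each cyclic class, with arbitrary values in $[0,1]$ per class. The paper takes a much more elementary route that bypasses second moments entirely: using irreducibility and $P(T_0)<1$, pick $n_0$ with $P(0<Z_{n_0,l_2}\le Z_{n_0,l_1}<1)>0$ for two vertices $l_1\sim_c l_2$ in the same (non-singleton) cyclic class, force $X_{n,l_1}=1$ and $X_{n,l_2}=0$ for $n_0<n\le n_1$ on an event of positive probability so that $Z_{n_1,l_1}-Z_{n_1,l_2}>\epsilon$, and choose $n_1$ so large that $\sum_{n\ge n_1}r_n<\epsilon/3$, so the gap can never close; since $\vZ^{(1)}_n+\vZ^{(2)}_n$ is constant on each cyclic class, this difference lives entirely in $\vZ^{(3)}_n$, giving $P(\vZ^{(3)}_n\not\to\vzero)>0$. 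If you want to keep your $L^2$ route for necessity, you would need to propagate a lower bound of the type $E[\Vert\vZ_{n+1}^{(3)}\Vert^2\mid\mathcal F_n]\ge\sigma_{\min}(A_n)^2\Vert\vZ_n^{(3)}\Vert^2$ together with a positive variance injection at some finite time, in the spirit of what the paper does for the second component; as written, that step is missing.
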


As observed before, Theorem~\ref{teo:thirdpart} is providing necessary
and sufficient conditions for a {\em partial} almost sure asymptotic
synchronization of the system, that is for the {\em almost sure
  asymptotic synchronization within each cyclic class}.  This fact is
stated in the following result.

\begin{corollary}[Almost sure asymptotic synchronization within each cyclic class]
  \label{th-sincro-periodic}
Excluding the trivial case $\nper=N$ (i.e. all the cyclic classes have
only one element), we have almost sure asymptotic synchronization of
the system within each cyclic class if and only if $\sum_n r_n =
+\infty$.
\end{corollary}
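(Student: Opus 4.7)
The plan is to reduce Corollary~\ref{th-sincro-periodic} to Theorem~\ref{teo:thirdpart} via the spectral decomposition in Theorem~\ref{teo:sp-project}. The reduction consists in proving the equivalence
$$
\text{partial a.s. synchronization within the cyclic classes} \;\Longleftrightarrow\; \vZ_n^{(3)}\stackrel{a.s.}\longrightarrow\vzero.
$$
Once this equivalence is in hand, the conclusion is immediate: under the standing assumption $\nper<N$, Theorem~\ref{teo:thirdpart} says that the right-hand side holds if and only if $\sum_n r_n=+\infty$.

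The first implication of the equivalence is essentially bookkeeping. By Theorem~\ref{teo:sp-project}, $\vZ_n^{(1)}=\widetilde{Z}_n\vone$ has identical entries across all vertices, and $\vZ_n^{(2)}=\vZ_n^{(C)}-\widetilde{Z}_n\vone$ is constant on each cyclic class by the very definition of $\vZ_n^{(C)}$ in~\eqref{def:Z_2}. Hence for any pair $l_1\sim_c l_2$,
$$
Z_{n,l_2}-Z_{n,l_1} \;=\; Z_{n,l_2}^{(3)}-Z_{n,l_1}^{(3)},
$$
so partial synchronization within the cyclic classes is the same as the almost sure vanishing of all within-class differences of $\vZ_n^{(3)}$. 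The converse implication — that vanishing of these within-class differences forces $\vZ_n^{(3)}\to\vzero$ — rests on the linear-algebraic fact that the only vector of $Span\{\vr_1,\dots,\vr_{N-\nper}\}$ which is constant on each cyclic class is the zero vector.

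To justify this fact, I would identify the subspace of vectors constant on each cyclic class with $Span\{\vone,\vq_1,\dots,\vq_{\nper-1}\}$. It has dimension $\nper$ and is spanned by the indicator vectors of the $\nper$ cyclic classes; the spectral condition $[W^\top]_{l_1,l_2}=0$ unless $l_1$ and $l_2$ lie in consecutive cyclic classes ensures that $W^\top$ acts on this subspace as a cyclic permutation of the class indicators, so its eigenvalues there are precisely the complex $\nper$-roots of unity, and the subspace therefore coincides with $Span\{\vone,\vq_1,\dots,\vq_{\nper-1}\}$. The direct-sum decomposition asserted in Theorem~\ref{teo:sp-project} then yields $Span\{\vr_j\}\cap Span\{\vone,\vq_j\}=\{\vzero\}$. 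A finite-dimensional norm-equivalence argument upgrades the almost sure vanishing of within-class differences of $\vZ_n^{(3)}$ to $\vZ_n^{(3)}\stackrel{a.s.}\longrightarrow\vzero$, which completes the equivalence. All the analytic difficulty has already been absorbed into Theorem~\ref{teo:thirdpart}; the only delicate step here is the identification of the cyclic-class-constant subspace with $Span\{\vone,\vq_j\}$, but this is a purely linear-algebraic consequence of the Perron–Frobenius facts recalled at the start of Section~\ref{sec:decomp-conv-sincro}, and I do not expect any genuine obstacle.
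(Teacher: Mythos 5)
Your proposal is correct and follows essentially the same route as the paper: within each cyclic class the components $\vZ_n^{(1)}+\vZ_n^{(2)}$ are constant, so the within-class differences of $\vZ_n$ coincide with those of $\vZ_n^{(3)}$, and the complementarity of $Span\{\vr_1,\dots,\vr_{N-\nper}\}$ with the class-constant subspace $Span\{\vone,\vq_1,\dots,\vq_{\nper-1}\}$ from Theorem~\ref{teo:sp-project} lets one conclude via Theorem~\ref{teo:thirdpart}. Your explicit norm-equivalence step even tightens the paper's slightly informal remark about the impossibility of a nonzero class-constant limit of $\vZ_n^{(3)}$, but the underlying argument is the same.
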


Regarding the above result, it is important to underline that the
almost sure convergence of the single processes $\{(Z_{n,l})_n:\, l\in V\}$
is not guaranteed under the condition $\sum_n r_n=+\infty$, as we can
only affirm the almost sure convergence toward zero of the difference
between two processes belonging to the same cyclic class.

\begin{remark}\label{rem:cond-nec}
  Since Theorem \ref{teo:sp-project}, Theorem~\ref{teo:thirdpart}
  shows that, at least when $\nper<N$, the condition $\sum_n r_n =
  +\infty$ is necessary for the complete almost sure asymptotic
  synchronization.  More ahead we will see that it is actually
  necessary also in the case $\nper=N$ in order to let
  ${\vZ}_{n}^{(2)}$ asymptotically vanishes as well.
\end{remark}

Condition $\sum_n r_n=+\infty$ is not sufficient in order to guarantee
the complete almost sure synchronization of the system because, by
Theorem~\ref{teo:sp-project}, we have to control the second component
$\vZ_n^{(2)}$ too. 
Regarding this issue, 
the following example will result to be useful in order
to understand the sufficient and necessary conditions for the almost
sure convergence toward $\vzero$ of $\vZ_n^{(2)}$.
    
\begin{example}\label{example:no-sincro-r_n-infinite}
  Consider the case of $N=3$ and
  $$W^\top=\left(\begin{smallmatrix} 0 & 2/3 & 1/3 \\ 1 &0 & 0 \\ 1 &
    0 & 0\end{smallmatrix}\right),$$ whose eigenvalues are
    $\{1,0,-1\}$ and so the period is $\nper=2$. For
      simplicity, suppose $P(Z_{0,l}\in (0,1), \forall l\in V)>0$
      (otherwise, we have to work starting from a time-step $n_0$ such
      that $P(Z_{n_0,l}\in (0,1), \forall l\in V)>0$, which always exists by
      the irreducibility of $W$ and the assumption $P(T_0)<1$). Set
      $A_n=\{X_{n,1}=0\} \cap \{X_{n,2}=1,X_{n,3}=1\}$ for $n$ even
      and $A_n=\{X_{n,1}=1\} \cap \{X_{n,2}=0,X_{n,3}=0\}$ for $n$
      odd, so that we have
\begin{align*}
& Z_{2k,1} = (1-r_{2k})Z_{2k-1,1} \leq (1-r_{2k}),
\\
& Z_{2k+1,1} = (1-r_{2k+1})Z_{2k,1} + r_{2k+1} \geq r_{2k+1},
\\
& Z_{2k,2} = (1-r_{2k})Z_{2k-1,2} + r_{2k} \geq r_{2k},
\\
& Z_{2k+1,2} = (1-r_{2k+1})Z_{2k,2} + r_{2k+1} \leq (1-r_{2k+1}),
\\
& Z_{2k,3} = (1-r_{2k})Z_{2k-1,3} + r_{2k} \geq r_{2k},
\\
& Z_{2k+1,3} = (1-r_{2k+1})Z_{2k,3} + r_{2k+1} \leq (1-r_{2k+1})
.
\end{align*}
Then, we will prove that, when $\sum_n (1-r_n) < +\infty$, there is a
strictly positive probability that $\boldsymbol{Z}_n$ does not
converge. More specifically, we will show that the probability of the
event $A=\cap_{n=1}^{\infty} A_n$ is strictly positive if
$\sum_n (1-r_n) < +\infty$. To this end, let $\bar{A}_n =
\cap_{k=0}^{n} A_k$ and notice that
$$
P(A) = P(\cap_{n=1}^{\infty} A_n) = \prod_{n=1}^{\infty}
P(A_n \vert  \cap_{k=0}^{n-1} A_k) = \prod_{n=1}^{\infty} P(A_n \vert  \bar{A}_{n-1}).
$$
Then, take $n$ even and note that
$$
P(A_n \vert  \bar{A}_{n-1}) = 
P(X_{n,1}=0 \vert  \bar{A}_{n-1}) 
\cdot P(X_{n,2}=1 \vert  \bar{A}_{n-1})
\cdot P(X_{n,3}=1 \vert  \bar{A}_{n-1}).
$$ Now, consider $P(X_{n,1}=0 \vert  \bar{A}_{n-1})$ and notice that, since
$\{X_{n-1,2}=0,X_{n-1,3}=0\}\subset\bar{A}_{n-1}$, we have on
$\bar{A}_{n-1}$,
\begin{align*}
  Z_{n-1,2} &= (1-r_{n-1})Z_{n-2,2} + r_{n-1}X_{n-1,2} =
  (1-r_{n-1})Z_{n-2,2} \leq (1-r_{n-1}),
\\
Z_{n-1,3} &= (1-r_{n-1})Z_{n-2,3} + r_{n-1}X_{n-1,3} =
(1-r_{n-1})Z_{n-2,2} \leq (1-r_{n-1}),
\end{align*}
which implies 
$$
P(X_{n,1}=0 \vert  \bar{A}_{n-1}) = 1- \tfrac{2}{3} Z_{n-1,2} - \tfrac{1}{3} Z_{n-1,2}
\geq r_{n-1}.
$$ Analogously, consider $P(X_{n,2}=1 \vert  \bar{A}_{n-1})$ (the same is
for $X_{n,3}$) and notice that, since
$\{X_{n-1,1}=1\}\subset\bar{A}_{n-1}$, we have on $\bar{A}_{n-1}$, 
$$
Z_{n-1,1} = (1-r_{n-1})Z_{n-2,1} + r_{n-1}X_{n-1,1} \geq r_{n-1},
$$
which implies 
$$
P(X_{n,2}=1 \vert  \bar{A}_{n-1}) = Z_{n-1,2} \geq r_{n-1}.
$$
Then we have 
$P(A_n \vert  \bar{A}_{n-1})\geq r_{n-1}^3$, which implies 
$$
P(A) = \prod_{n=1}^{\infty} P(A_n \vert  \bar{A}_{n-1}) \geq 
\Big( 
\prod_{n=1}^{\infty} r_{n-1} 
\Big)^3
= 
\Big( 
\prod_{n=1}^{\infty}\left( 1 - (1-r_{n-1})\right)
\Big)^3 >0, 
$$ if $\sum_n (1-r_{n}) < +\infty$. Therefore, we have found a
non-negligible event $A$ on which $ \vZ_{2n} \to (0,1,1)^\top $ and
$ \vZ_{2n+1} \to (1,0,0)^\top $, that is on $A$ we have neither
asymptotic synchronization of the entire system nor almost sure
convergence of $(\vZ_n)_n$. Let us now discuss the asymptotic behavior
of the three components of the decomposition~\eqref{eq:decomposZ}. A
direct calculation shows that $\vv^\top = (1/2,1/3,1/6)$ is the Perron
eigenvector and
\[
\vZ^{(1)}_n = 
\frac{3 Z_{n,1} + 2 Z_{n,2} + Z_{n,3}}{6} 
\begin{pmatrix}
1
\\
1
\\
1
\end{pmatrix}
, \quad
\vZ^{(2)}_n = 
\vZ^{(C)}_n
- \vZ^{(1)}_n, 
\quad
\vZ^{(C)}_n = 
\begin{pmatrix}
Z_{n,1} 
\\
\tfrac{2 Z_{n,2} + Z_{n,3}}{3} 
\\
\tfrac{2 Z_{n,2} + Z_{n,3}}{3}  
\end{pmatrix}.
\]
Analogously, we could obtain $\vZ^{(1)}_n$, $\vZ^{(2)}_n$ and
$\vZ^{(3)}_n$ by using the spectral representation of
Theorem~\ref{teo:sp-project}, which in this case is the following:
\[
\vone \vv^\top=\left(\begin{smallmatrix} 1/2 & 1/3 & 1/6 \\ 1/2 & 1/3 & 1/6 \\ 
1/2 & 1/3 & 1/6\end{smallmatrix}\right),
\qquad
\vq_{1}\vv_1^\top=\left(\begin{smallmatrix} 1/2 & -1/3 & -1/6 \\ -1/2 & 1/3 & 1/6 \\ 
-1/2 & 1/3 & 1/6\end{smallmatrix}\right),
\]
\[
\vr_{1} \vp_{1}^\top=\left(\begin{smallmatrix}  0 &0 & 0 \\  0 & 1/3 & -1/3 \\
0 & -2/3 & 2/3\end{smallmatrix}\right).
\]
On $A$, the first term $\vZ^{(1)}_n $ almost surely converges to $
\vZ^{(1)}_\infty = \tfrac{1}{2}\vone$. The process $(\vZ_n^{(C)})_n$,
and so $(\vZ_n^{(2)})_n$, does not converge on $A$. Indeed, on $A$ we
have $\vZ^{(C)}_{2n}\to (0,1,1)^{\top}$ and $\vZ^{(C)}_{2n+1}\to
(1,0,0)^{\top}$.  Finally, it is easy to check that on $A$ the third
component $(\vZ_n^{(3)})_n$ almost surely converges to zero (in
agreement with Theorem~\ref{teo:thirdpart}, since $\sum_n
(1-r_n)<+\infty$ trivially implies $\sum_n r_n = +\infty$).  It is
interesting to note that, in this framework, we have $
\vZ_n=\vZ^{(1)}_n+\vZ_n^{(2)}+\vZ_n^{(3)} \stackrel{a.s.}\sim
\vZ^{(1)}_n+\vZ_n^{(2)}=\vZ_n^{(C)}= (Z_{n,1} ,\tfrac{2 Z_{n,2} +
  Z_{n,3}}{3} ,\tfrac{2 Z_{n,2} + Z_{n,3}}{3} )^\top, $ which is
constant on each cyclic class and, if we consider the
$\nper$-dimensional process $ \vZ_n^{(c)} = ( Z_{n,1} ,\tfrac{2
  Z_{n,1} + Z_{n,3}}{3} )^\top $ defined by taking a single entry of
$\vZ_n$ for each cyclic class, then $\vZ_n^{(c)}$ is still not
convergent on $A$ (since $\vZ_{2n}^{(c)}\to (0,1)^\top$ and
$\vZ_{2n+1}^{(c)}\to (1,0)^\top$), but its norm almost surely
converges on $A$ (we have $ \vert\vert \vZ_n^{(c)}\vert\vert ^2\to 1 $). This is a
general fact that we will prove in the sequel.  \qed
\end{example}

We will now formally provide the general theory inspired by the above
Example~\ref{example:no-sincro-r_n-infinite}. In particular, we notice
that the non-convergence of $\vZ_n$ in the example has been induced
in the case $\sum_n (1-r_n) < +\infty$ by the
component $\vZ_n^{(2)}$, which is present because of the period
$\nper\geq 2$ of $W^{\top}$.  This suggests that, for a periodic
matrix $W^\top$, a necessary condition in order to have the complete
almost sure asymptotic synchronization of the system is $\sum_n
(1-r_n)=+\infty$. However, this is not sufficient. Indeed, we need a
stronger condition as stated in the following result.

\begin{theorem}\label{teo:secondpart}
When $\nper=1$, we have
${\vZ}_{n}^{(2)}\equiv \vzero$.  When $\nper\geq 2$, we have
${\vZ}_{n}^{(2)}\stackrel{a.s.}\longrightarrow \vzero$ if and only if
$\sum_n r_n(1-r_n)=+\infty$.
\end{theorem}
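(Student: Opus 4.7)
The case $\nper = 1$ is immediate from Theorem~\ref{teo:sp-project}: the defining sum $\sum_{j=1}^{\nper-1}\vq_j\vv_j^\top\vZ_n$ is empty. For $\nper\geq 2$, my plan is to reduce the problem to scalar complex recursions via the spectral representation. For each $\nper$-th root of unity $\lambda=e^{i\theta}\neq 1$, let $\vv_\lambda$ denote the corresponding left eigenvector of $W^\top$ and set $Y_{n,\lambda}:=\vv_\lambda^\top\vZ_n$; by linear independence of $\vq_1,\dots,\vq_{\nper-1}$, it suffices to show $Y_{n,\lambda}\to 0$ a.s.\ for each such $\lambda$ under the stated dichotomy. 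A direct computation from $\vZ_{n+1}=(1-r_n)\vZ_n+r_n\vX_{n+1}$ and $E[\vX_{n+1}\mid\mathcal{F}_n]=W^\top\vZ_n$ yields
\begin{equation*}
Y_{n+1,\lambda}=\bigl(1-r_n(1-\lambda)\bigr)Y_{n,\lambda}+r_n M_{n+1,\lambda},
\end{equation*}
where $M_{n+1,\lambda}:=\vv_\lambda^\top(\vX_{n+1}-E[\vX_{n+1}\mid\mathcal{F}_n])$ is bounded and a martingale-difference. Expanding the modulus yields the crucial identity
\begin{equation*}
|1-r_n(1-\lambda)|^2=1-2(1-\cos\theta)\,r_n(1-r_n),
\end{equation*}
which exposes the role of $\sum_n r_n(1-r_n)$. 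Note also that $\vv_\lambda^\top\vone=0$ (since $W^\top\vone=\vone$ and $\lambda\neq 1$), so $Y_{n,\lambda}$ is insensitive to the leading component $\widetilde{Z}_n\vone$.

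For the sufficiency direction, assume $\sum_n r_n(1-r_n)=+\infty$. I would apply a Robbins--Siegmund style argument to the bounded, nonnegative process $V_n:=|Y_{n,\lambda}|^2$, which satisfies
\begin{equation*}
E[V_{n+1}\mid\mathcal{F}_n]=\bigl(1-2(1-\cos\theta)r_n(1-r_n)\bigr)V_n+r_n^2\sigma_n^2,
\end{equation*}
with $\sigma_n^2:=E[|M_{n+1,\lambda}|^2\mid\mathcal{F}_n]$ uniformly bounded. When $\sum_n r_n^2<+\infty$ the error is summable and Robbins--Siegmund gives $V_n\to 0$ at once. The delicate case is $\sum_n r_n^2=+\infty$: here I would rewrite $r_n^2=r_n-r_n(1-r_n)$ to split off a telescoping contribution and exploit that in this regime the system is forced toward the boundary $\{0,1\}^N$, so that the Bernoulli parameters $[W^\top\vZ_n]_l$ approach $\{0,1\}$ and $\sigma_n^2\to 0$ a.s., allowing the leftover error to be absorbed. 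An alternative route is to normalize $V_n$ by $\prod_{k<n}|1-r_k(1-\lambda)|^2$ and combine $L^2$-martingale convergence with Kronecker's lemma.

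For the necessity direction, assume $\sum_n r_n(1-r_n)<+\infty$, which forces $r_n$ to concentrate near $\{0,1\}$. I would generalize Example~\ref{example:no-sincro-r_n-infinite}. By irreducibility of $W$ and $P(T_0)<1$, pick $n_0$ with $P(\vZ_{n_0}\in(0,1)^N)>0$, and construct cyclic events $A_n$ specifying that, for each $n\geq n_0$, $\vX_n$ equals $\vone$ on exactly one cyclic class of $W^\top$ and $\vzero$ on the others, with the chosen class cycling through the $\nper$ classes as $n$ varies modulo $\nper$. On $\bigcap_{n\geq n_0}A_n$ the process $\vZ_n$ is driven toward $\nper$ configurations that are constant and $\{0,1\}$-valued on each cyclic class and oscillate periodically in $n$, so $Y_{n,\lambda}$ cannot vanish in the limit. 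A conditional-probability estimate in the spirit of the example yields a lower bound $P(A_n\mid\bigcap_{n_0\leq k<n}A_k)\geq 1-c\,r_{n-1}(1-r_{n-1})$, so that $P(\bigcap_{n\geq n_0}A_n)>0$ precisely when $\sum_n r_n(1-r_n)<+\infty$. The principal obstacle is the sufficiency argument in the regime $\sum_n r_n^2=+\infty$, where the naive Robbins--Siegmund bound leaves a non-summable remainder that must be absorbed using the a priori vanishing of $\sigma_n^2$; the necessity direction is guided by the example but requires care in adapting the cyclic combinatorics to general $\nper\geq 2$.
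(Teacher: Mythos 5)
Your reduction to the scalar processes $\eta_n=\vv_{j}^\top\vZ_n$ and the identity $\vert 1-r_n(1-\lambda)\vert^2=1-2(1-\cos\theta)\,r_n(1-r_n)$ coincide with the paper's starting point, but both directions of your argument have genuine gaps. For sufficiency, the entire difficulty is to show that the noise term $\sum_n r_n^2\,E[\vert\vv_{j}^\top\Delta\vM_{n+1}\vert^2\mid\mathcal F_n]$ is almost surely finite \emph{also} when $\sum_n r_n^2=+\infty$, and your proposed fix does not close this: the claim that in this regime the system is forced to the boundary so that $\sigma_n^2\to 0$ is not available at this stage (the polarization-type statements of the paper are proved later and under the \emph{opposite} hypothesis $\sum_n r_n(1-r_n)<+\infty$, so invoking them here is circular), and even granting $\sigma_n^2\to 0$ a.s.\ this would not yield summability of $r_n^2\sigma_n^2$. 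The paper's key ingredient, which you are missing, is Lemma~\ref{lemma-tec}: since $\widetilde Z_n=\vv^\top\vZ_n$ is a bounded martingale, its predictable quadratic variation converges, and the bound $E[(Y_{n+1}-\widetilde Z_n)^2\mid\mathcal F_n]\geq v_{\min}^2V_n$ with $V_n=(\vone-W^\top\vZ_n)^\top W^\top\vZ_n$ gives $\sum_n r_n^2V_n<+\infty$ a.s.\ and in expectation; since $\sigma_n^2\leq \max_l v_l^2\,V_n$, the error is controlled uniformly, with no case distinction on $\sum_n r_n^2$. With this, Robbins--Siegmund yields a.s.\ convergence of $\vert\eta_n\vert^2$, and the limit is identified as zero by taking expectations and applying the deterministic Lemma~\ref{lemma-tecP} with $s_n=2r_n(1-r_n)(1-\cos\theta)$, $\sum_n s_n=+\infty$.

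For necessity, your construction modelled on Example~\ref{example:no-sincro-r_n-infinite} cannot cover the whole regime $\sum_n r_n(1-r_n)<+\infty$. That regime contains, for instance, sequences with $\sum_n r_n<+\infty$ and $r_n\to 0$, where the inclinations freeze near their interior initial values; then the conditional probability of forcing the prescribed $\{0,1\}$-pattern on entire cyclic classes at every step stays bounded away from one, the claimed estimate $P(A_n\mid\cap_{k<n}A_k)\geq 1-c\,r_{n-1}(1-r_{n-1})$ fails, and the infinite product is zero, so your event has probability zero precisely in a sub-case you must handle. (Non-vanishing of $\vZ_n^{(2)}$ is still true there, but for a different reason: the process converges to a non-synchronized limit.) The paper sidesteps any event construction with a short $L^2$ argument: $E[\vert\eta_{n+1}\vert^2]\geq(1-s_n)E[\vert\eta_n\vert^2]$ with $\sum_n s_n<+\infty$ gives $\liminf_n E[\vert\eta_n\vert^2]>0$ (Lemma~\ref{lemma-tec-appendix-1}) whenever $E[\vert\eta_0\vert^2]>0$, which is incompatible with $\eta_n\stackrel{a.s.}\to 0$ since $(\eta_n)_n$ is bounded; positivity of $E[\vert\eta_0\vert^2]$ follows from $P(T_0)<1$ and the explicit form of $\vv_j$ (Lemma~\ref{lem:eigvD_per}). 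If you insist on a constructive proof you would need at least a case split (frozen versus oscillating regimes), which the $L^2$ route renders unnecessary.
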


Note that $\sum_n r_n(1-r_n)=+\infty$ implies both $\sum_n
r_n=+\infty$ and $\sum_n (1-r_n)=+\infty$, because $0<r_n<1$.
Therefore, this result, combined with Remark~\ref{rem:cond-nec}, makes
{\em $\sum_n r_n=+\infty$ a necessary condition for the complete
  almost sure asymptotic synchronization} of the system (regardless of
the period $\nper$ of $W^\top$).  \\

\indent Now that we have determined the sufficient and necessary
conditions in order that ${\vZ}_{n}^{(2)}$ and ${\vZ}_{n}^{(3)}$
asymptotically vanish with probability one, we can state the following
synthetic result on the complete almost sure asymptotic
synchronization of the system.

\begin{corollary}[Complete almost sure asymptotic synchronization]
  \label{th-sincro}
 When $\nper = 1$, we have ${\vZ}_n\stackrel{a.s.}\longrightarrow
 Z_\infty \vone $ if and only if $\sum_n r_n =+\infty$.  When $\nper
 \geq 2$, we have ${\vZ}_n\stackrel{a.s.}\longrightarrow Z_\infty
 \vone $ if and only if $\sum_n r_n(1-r_n) =+\infty$.
\end{corollary}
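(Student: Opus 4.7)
The plan is to derive this statement as a direct consequence of the decomposition given by Theorem~\ref{teo:sp-project}, combined with Theorem~\ref{teo:thirdpart} and Theorem~\ref{teo:secondpart}. The key conceptual point is that the three components of the decomposition $\vZ_n = \vZ_n^{(1)} + \vZ_n^{(2)} + \vZ_n^{(3)}$ are linearly independent and live in complementary subspaces, so almost sure convergence of the sum is equivalent to the almost sure convergence of each component separately.

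First, I would observe that $\vZ_n^{(1)} = \widetilde{Z}_n \vone$ lies in $\mathrm{Span}\{\vone\}$ and, being the scalar multiple of a bounded martingale, converges almost surely to a random variable of the form $Z_\infty \vone$ without any hypothesis on $(r_n)_n$. Next, since by Theorem~\ref{teo:sp-project} the components $\vZ_n^{(1)}, \vZ_n^{(2)}, \vZ_n^{(3)}$ lie in linearly independent subspaces, the convergence $\vZ_n \stackrel{a.s.}\to Z_\infty \vone$ is equivalent to the joint almost sure convergence of $\vZ_n^{(2)}$ and $\vZ_n^{(3)}$ to $\vzero$ (the limit of $\vZ_n^{(1)}$ automatically absorbing the $\vone$-component).

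Now I would split into the two cases. When $\nper = 1$, Theorem~\ref{teo:secondpart} yields $\vZ_n^{(2)} \equiv \vzero$, so the only remaining condition is $\vZ_n^{(3)} \to \vzero$ a.s. Since we are assuming $N \geq 2$, the case $\nper = 1$ forces $\nper < N$, and Theorem~\ref{teo:thirdpart} then gives that this holds if and only if $\sum_n r_n = +\infty$, matching the claim. When $\nper \geq 2$, Theorem~\ref{teo:secondpart} asserts that $\vZ_n^{(2)} \to \vzero$ a.s. if and only if $\sum_n r_n(1-r_n) = +\infty$, so this condition is certainly necessary. To see that it is also sufficient, note that $r_n \in (0,1)$ implies $r_n(1-r_n) \leq r_n$, so $\sum_n r_n(1-r_n) = +\infty$ implies $\sum_n r_n = +\infty$, and thus Theorem~\ref{teo:thirdpart} ensures $\vZ_n^{(3)} \to \vzero$ a.s.\ as well (in either sub-case $\nper = N$ or $\nper < N$).

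Since this is a pure assembly of already-established statements, the main (minor) subtlety is the implication $\sum_n r_n(1-r_n) = +\infty \Rightarrow \sum_n r_n = +\infty$, which I would make explicit to ensure the necessary/sufficient dichotomy for both components is reconciled into a single condition in the periodic case. No real obstacle arises: the heavy lifting has been done in Theorems~\ref{teo:sp-project}, \ref{teo:thirdpart}, and~\ref{teo:secondpart}.
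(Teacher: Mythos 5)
Your proposal is correct and follows essentially the same route as the paper: reduce via the linear independence in Theorem~\ref{teo:sp-project} to the vanishing of ${\vZ}_{n}^{(2)}$ and ${\vZ}_{n}^{(3)}$, then invoke Theorems~\ref{teo:secondpart} and~\ref{teo:thirdpart}, using $0<r_n<1$ to get $\sum_n r_n(1-r_n)=+\infty\Rightarrow\sum_n r_n=+\infty$ in the periodic case. Your write-up is merely more explicit than the paper's short argument (case split on $\nper$, the observation that $N\geq 2$ forces $\nper=1<N$), with no substantive difference.
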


Summing up, with the results stated in this section, the question of
the complete almost sure asymptotic synchronization of the system has been
completely addressed (in \cite{ale-cri-ghi-barriers} we investigate 
the distribution of the common limit $Z_\infty$, pointing out when it can take
the extreme values, $0$ or $1$, with a strictly positive probability).  
 The next step of the present work is now
 to determine the asymptotic dynamics of the processes
  $\{(Z_{n,l})_n:\, l\in V\}$ when the complete almost sure asymptotic
  synchronization does not hold (see the following Section \ref{sec:non-sincro}).


\section{Regimes of non-almost sure asymptotic synchronization of the entire system}
\label{sec:non-sincro}

As in Section~\ref{sec:decomp-conv-sincro}, we consider a system of
$N\geq 2$ RSPs with a network-based interaction as defined in
\eqref{interacting-1-intro} and \eqref{interacting-2-intro} or,
equivalently, in~\eqref{eq:cond-mean} and \eqref{eq:dynamics}, where
the matrix $W$ has non-negative entries, is irreducible and is such
that $W^{\top}\boldsymbol{1}=\boldsymbol{1}$. Moreover, in order to
exclude trivial cases, we fix $P(T_0)<1$.  \\

\indent In this section, we analyze the asymptotic behavior of the
system when the entire process $(\vZ_n)_n$ does not almost surely
synchronize in the limit.  First, we focus on the scenario in which
there is almost sure convergence without almost sure synchronization
(neither complete nor partial), and then we will consider the case in
which we have a partial almost sure asymptotic synchronization (that
is almost sure asymptotic synchronization within the cyclic classes),
but there is a strictly positive probability that the entire process
$(\vZ_n)_n$ neither converges nor asymptotically synchronizes.

\subsection{Almost sure convergent regime}\label{subsec:convergent_no_synchro}

Let us first give the following simple result 
regarding the trivial case $\sum_n r_n<+\infty$.

\begin{prop}\label{th-r_n-finite-convergent}
When $\sum_n r_n <+\infty$, all the processes $({\vZ}_{n})_n$,
$({\vZ}_{n}^{(1)})_n$, $({\vZ}_{n}^{(2)})_n$ and $({\vZ}_{n}^{(3)})_n$
converge almost surely.
Moreover, for any pair of vertices $(l_1,l_2)\in V\times V$, with $l_1\neq l_2$, we have
$P(Z_{\infty, l_1}\neq Z_{\infty, l_2})>0$.
\end{prop}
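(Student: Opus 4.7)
\emph{Part 1 (almost sure convergence).} From \eqref{eq:dynamics} and $\vZ_n,\vX_{n+1}\in[0,1]^N$ we get the sample-wise bound $\|\vZ_{n+1}-\vZ_n\|_{\infty}=r_n\|\vX_{n+1}-\vZ_n\|_{\infty}\le r_n$. Under $\sum_n r_n<+\infty$ this yields $\sum_n\|\vZ_{n+1}-\vZ_n\|_{\infty}<+\infty$ on every sample point, so $(\vZ_n)_n$ is Cauchy sample-wise and converges almost surely (in fact surely) to some $\vZ_\infty$. By Theorem~\ref{teo:sp-project} each $\vZ_n^{(i)}$ is the image of $\vZ_n$ under a fixed linear map (a spectral projector), so continuity of linear maps transfers the convergence to each $(\vZ_n^{(i)})_n$.

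\emph{Part 2 (positive probability of non-synchronization).} Fix $l_1\neq l_2$ in $V$; set $s_{n_0}:=\sum_{k\ge n_0}r_k\downarrow 0$ and $D_n:=Z_{n,l_1}-Z_{n,l_2}$. The above increment bound also gives $|Z_{\infty,l}-Z_{n_0,l}|\le s_{n_0}$, so it suffices to exhibit, for some $n_0$, an event $E\in\mathcal F_{n_0}$ with $P(E)>0$ on which $D_{n_0}>2s_{n_0}$: this yields $|Z_{\infty,l_1}-Z_{\infty,l_2}|\ge D_{n_0}-2s_{n_0}>0$ on $E$.

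\emph{Construction of $E$ and the main obstacle.} First, using the non-trivial initial condition $P(T_0)<1$ together with the irreducibility of $W$, I would exhibit a time $k_0\ge 0$ and an event $\Omega_{k_0}\in\mathcal F_{k_0}$ of positive probability on which $\vZ_{k_0}\in(0,1)^N$ and (after swapping $l_1\leftrightarrow l_2$ if necessary) $D_{k_0}\ge 0$. Starting from $\Omega_{k_0}$, I would \emph{force}, for every $k=k_0,\dots,n_0-1$, the configuration $X_{k+1,l_1}=1$ and $X_{k+1,l}=0$ for all $l\ne l_1$. An induction on $k$ shows that on this forced event $\vZ_k\in(0,1)^N$ for every $k_0\le k\le n_0$, so each conditional Bernoulli parameter $(W^\top\vZ_k)_l$ lies in $(0,1)$ and each forcing factor $(W^\top\vZ_k)_{l_1}\prod_{l\ne l_1}(1-(W^\top\vZ_k)_l)$ is strictly positive; being a finite product over $k=k_0,\dots,n_0-1$, the forced event has strictly positive probability. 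A direct induction along the forced path gives
\[
1-D_{n_0}\;=\;(1-D_{k_0})\!\!\!\prod_{k=k_0}^{n_0-1}(1-r_k),
\]
so $D_{n_0}\ge 1-\prod_{k=k_0}^{n_0-1}(1-r_k)$, which increases to $1-\prod_{k\ge k_0}(1-r_k)>0$ as $n_0\to\infty$ (the infinite product is strictly less than $1$ because $r_{k_0}>0$, and strictly positive because $\sum_k r_k<+\infty$). Picking $n_0$ large enough then yields $D_{n_0}>2s_{n_0}$ on the forced event, which serves as $E$. The main technical obstacle is this first step: showing that $(0,1)^N$ is reached with positive probability in finite time, starting only from $P(\vZ_0\notin\{\vzero,\vone\})>0$; strong connectivity of the graph $G$ is what makes interior values propagate across the network in a bounded number of steps.
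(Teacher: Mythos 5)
Your proposal is correct, and for the non-synchronization half it is essentially the paper's own argument: the paper proves the second statement by pointing to Step (i) in the second part of the proof of Theorem~\ref{teo:thirdpart}, where one starts from a positive-probability event $\{0<Z_{n_0,l_2}\le Z_{n_0,l_1}<1\}$ (asserted, exactly as you do, from irreducibility and $P(T_0)<1$, so the "main obstacle" you flag is treated at the same level of detail there and is not a gap relative to the published proof), forces $X_{n,l_1}=1$, $X_{n,l_2}=0$ over a finite window, obtains your identity $1-\Delta Z_{n_1}=(1-\Delta Z_{n_0})\prod_{k=n_0}^{n_1-1}(1-r_k)$, and then uses the same summable-tail bound $\sum_{n\ge n_1}r_n$ to keep the limits apart. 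The only differences there are cosmetic: you force all $N$ coordinates (hence need $\vZ_{k_0}\in(0,1)^N$), whereas the paper forces only the two coordinates $l_1,l_2$ and needs only those two to lie in the interior. For the convergence half your route is genuinely more elementary: the deterministic bound $\Vert\vZ_{n+1}-\vZ_n\Vert_\infty\le r_n$ plus summability gives pathwise Cauchy (indeed sure) convergence, and linearity of the spectral projectors handles $\vZ_n^{(1)},\vZ_n^{(2)},\vZ_n^{(3)}$ at once; the paper instead verifies that each $(Z_{n,l})_n$ is a non-negative almost supermartingale and invokes Robbins--Siegmund \cite{rob}, then treats $\vZ_n^{(1)}$ via the bounded martingale $\widetilde{Z}_n$, $\vZ_n^{(2)}$ as a function of $\vZ_n$, and $\vZ_n^{(3)}$ by subtraction. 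Both are valid; yours buys simplicity and sure convergence, while the paper's phrasing stays within the almost-supermartingale machinery it uses throughout.
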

This result shows that, whenever $\sum_n r_n<+\infty$, the processes
$\{(Z_{n,l})_n:\, l\in V\}$ at the network vertices converge with
probability one, but none of their limits
can be almost surely equal, i.e. for any pair of processes 
there exists a strictly positive
probability that they do not synchronize in the limit.  
\\

\indent An interesting consideration that can be derived from
combining Theorem~\ref{teo:thirdpart} with
Proposition~\ref{th-r_n-finite-convergent}, is that, exactly as for
$({\vZ}_{n}^{(1)})_n$, also the process $({\vZ}_{n}^{(3)})_n$ always
converges almost surely, without any additional assumption on the
matrix $W$ or on the reinforcement sequence $(r_n)_n$.  This will not be the case of
the periodic component $({\vZ}_{n}^{(2)})_n$, which will lead in some
situations to the non-convergence of the process $({\vZ}_{n})_n$, as
we will see in the following subsection.

\subsection{Non-almost sure convergent regime}
\label{subsec:periodic_no_synchro}

In the previous subsection we have shown that the first
(i.e. ${\vZ}_{n}^{(1)}$) and the third component
(i.e. ${\vZ}_{n}^{(3)}$) always converge almost surely, while
regarding the second component ${\vZ}_{n}^{(2)}$, combining
Theorem~\ref{teo:secondpart} and
Proposition~\ref{th-r_n-finite-convergent}, it only remains to consider
the set of conditions summarized in the following assumption (which is
exactly the framework considered in
Example~\ref{example:no-sincro-r_n-infinite})
\begin{assumption}\label{ass:periodic}
Assume that all the following conditions hold true: 
\begin{enumerate}
\item $\nper\geq 2$,
\item $\sum_n r_n=+\infty$,
\item $\sum_n r_n(1-r_n)<+\infty$.
\end{enumerate}
\end{assumption}
In this section, we focus on this framework and we will show that only two
events are possible, both with a strictly positive probability: either
all the processes both converge and asymptotically synchronize to the
same barrier, or they neither converge nor asymptotically synchronize
all together, following in this second case a periodic dynamics that
can be described by the structure of the cyclic classes identified by
the matrix $W^\top$.  The proofs of the following results are postponed in
Section \ref{sec:proofs}.  \\

\indent From Corollary \ref{th-sincro-periodic} and Corollary
\ref{th-sincro}, we know that, under Assumption~\ref{ass:periodic},
there is a {\em partial almost sure asymptotic synchronization}:
indeed, we have the almost sure asymptotic synchronization within each
cyclic class, but not an almost sure asymptotic synchronization across
different classes, i.e. not a complete almost sure asymptotic
synchronization of the system.  Indeed, by 
Theorem~\ref{teo:secondpart}, we have that ${\vZ}_{n}^{(2)}$ does not
converge to zero with probability one. However, we have not specified yet
whether it converges or not.  To clarify this point, let us identify
each cyclic class $h$ with the scalar process $Z_{n,h}^{(c)}$ of the
unique value assumed by $\vZ_n^{(C)}=({\vZ}_{n}^{(1)} +
{\vZ}_{n}^{(2)})$ along that class.  Formally, we can define
$\vZ_n^{(c)} = \vZ_n^{(C)}/\sim_c $ as the quotient of $\vZ_n^{(C)}$
with respect to $\sim_c$.  Alternately, component-wise, for any cyclic
class $h = 0, \ldots, \nper-1$,
\begin{equation}\label{eq:defZnB}
Z_{n,h}^{(c)} =
\sum_{l_1 \in \text{cyclic class }h} \frac{v_{l_1}}{
\sum_{l_2 \in \text{cyclic class }h} v_{l_2}
} Z_{n,l_1} .
\end{equation}
The asymptotic behavior of the process $(Z_{n,h}^{(c)})_n$ in general will
depend on the properties of the matrix $W$ and on the reinforcement sequence
$(r_n)_n$.  However, there is a very general result about the almost
sure convergence of the norm of $Z_{n,h}^{(c)}$, that always holds
without any additional assumption on $W$ or $(r_n)_n$.

\begin{theorem}[Convergence of the norm]\label{th-norm}
The sequence $(\vert\vert \vZ_n^{(c)}\vert\vert )_n$ of the norms of $(\vZ_n^{(c)})_n$
almost surely converges.
\end{theorem}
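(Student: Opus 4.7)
My plan is to diagonalize the cyclic-shift action of $W^{\top}$ on the reduced process and then argue mode-by-mode. The key observation is that the projection $Q:\mathbb{R}^N\to\mathbb{R}^{\nper}$ defined by $(Q\vZ)_h=\sum_{l\in\text{class }h}v_l Z_l\big/\sum_{l\in\text{class }h}v_l$ (so that $Q\vZ_n=\vZ_n^{(c)}$) intertwines $W^{\top}$ with the cyclic shift $\sigma$, i.e.\ $QW^{\top}=\sigma Q$; a short computation using $\vv^{\top}W^{\top}=\vv^{\top}$ and the block structure of $W^{\top}$ forces the weights $V_h=\sum_{l\in\text{class }h}v_l$ to all equal $1/\nper$. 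This yields the closed recursion
\[
\vZ_{n+1}^{(c)}=(1-r_n)\vZ_n^{(c)}+r_n\vX_{n+1}^{(c)},\qquad E[\vX_{n+1}^{(c)}\mid\mathcal F_n]=\sigma\vZ_n^{(c)}.
\]
Expanding along the orthonormal Fourier basis $\psi_k=\nper^{-1/2}(1,\omega^k,\dots,\omega^{k(\nper-1)})^{\top}$, $\omega=e^{2\pi i/\nper}$, which diagonalizes $\sigma$ with eigenvalues $\omega^k$, I set $A_n^{(k)}=\psi_k^{*}\vZ_n^{(c)}$; by Parseval $\|\vZ_n^{(c)}\|^2=\sum_{k=0}^{\nper-1}|A_n^{(k)}|^2$, so it suffices to prove the a.s.\ convergence of each $|A_n^{(k)}|^2$.

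The mode dynamics reads $E[A_{n+1}^{(k)}\mid\mathcal F_n]=(1-r_n(1-\omega^k))A_n^{(k)}$. For $k=0$ the eigenvalue is $1$ and $A_n^{(0)}=\sqrt{\nper}\,\widetilde{Z}_n$ is the bounded martingale of \eqref{Z-tilde-dynamics}, which converges a.s.\ For $k\ne 0$ I would split on the nature of $(r_n)_n$: if $\sum_n r_n<+\infty$, Proposition~\ref{th-r_n-finite-convergent} gives the a.s.\ convergence of $\vZ_n$, hence of $A_n^{(k)}$; if $\sum_n r_n(1-r_n)=+\infty$, Theorem~\ref{teo:secondpart} yields $\vZ_n^{(2)}\to\vzero$ a.s., so $\vZ_n^{(c)}-\widetilde{Z}_n\vone_{\nper}\to\vzero$, and since $\psi_k^{*}\vone_{\nper}=0$ for $k\ne 0$ one obtains $A_n^{(k)}\to 0$ a.s.

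The remaining regime is exactly Assumption~\ref{ass:periodic}: $\sum_n r_n=+\infty$, $\sum_n r_n(1-r_n)<+\infty$ and $\nper\ge 2$, where $A_n^{(k)}$ typically does not converge because its phase accumulates as $\sum r_n$ diverges. My plan is to factor out the deterministic rotation. A direct computation gives
\[
|1-r_n(1-\omega^k)|^2=1-2(1-\mathrm{Re}\,\omega^k)\,r_n(1-r_n).
\]
Since $\sum r_n(1-r_n)<+\infty$ forces $r_n(1-r_n)\to 0$, the only possible vanishing of $1-r_n(1-\omega^k)$ (namely $\omega^k=-1$ together with $r_n=1/2$) is excluded for $n\ge n_0$ large enough. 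I would then set $\beta_n^{(k)}=\prod_{j=n_0}^{n-1}(1-r_j(1-\omega^k))$ and $B_n^{(k)}:=A_n^{(k)}/\beta_n^{(k)}$, which by construction is a complex $(\mathcal F_n)$-martingale. The summability implies that $|\beta_n^{(k)}|^2$ converges to a strictly positive deterministic limit, and $|A_n^{(k)}|\le\sqrt{\nper}$ is uniformly bounded, so $|B_n^{(k)}|$ is bounded; hence the bounded complex martingale $B_n^{(k)}$ converges a.s., and $|A_n^{(k)}|^2=|\beta_n^{(k)}|^2\,|B_n^{(k)}|^2$ converges a.s.

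The hard part is precisely this last case, where the statement asserts the convergence of the scalar norm $\|\vZ_n^{(c)}\|$ in a regime where the vector $\vZ_n^{(c)}$ itself is expected to be non-convergent (asymptotic periodic behavior of the cyclic classes). The factorization $A_n^{(k)}=\beta_n^{(k)}B_n^{(k)}$ is what makes it go through: it pushes all the oscillation into the argument of the deterministic factor $\beta_n^{(k)}$, which disappears when taking moduli, leaving behind a bounded complex martingale whose modulus convergence yields that of $|A_n^{(k)}|^2$. The only technical nuisance, the possible vanishing of $1-r_n(1-\omega^k)$ when $\omega^k=-1$ and $r_n=1/2$, is circumvented by starting the product at a sufficiently large index $n_0$.
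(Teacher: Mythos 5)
Your proposal is correct, and in the critical regime it genuinely departs from the paper's argument. The starting point is the same: your Fourier coefficients $A_n^{(k)}=\psi_k^{*}\vZ_n^{(c)}$ are, up to constant factors, exactly the peripheral spectral coefficients that the paper isolates (namely $\widetilde Z_n$ and $\vv_j^\top\vZ_n$, $j=1,\dots,\nper-1$), and your Parseval identity is the content of Lemma~\ref{lem:modulusSpectral}, $\Vert\vZ_n^{(c)}\Vert=\nper^{3/2}\Vert\vC_n\Vert$. The paper then concludes in one stroke, for every $(r_n)_n$: since the peripheral eigenvalue matrix $D_\nper$ is unitary, $E[\Vert\vC_{n+1}\Vert^2\mid\mathcal F_n]\le\Vert\vC_n\Vert^2+\mathrm{const}\cdot r_n^2V_n$ with $\sum_n r_n^2V_n<+\infty$ a.s.\ by Lemma~\ref{lemma-tec}, so $\Vert\vC_n\Vert^2$ is a nonnegative almost supermartingale and Robbins--Siegmund gives convergence with no case distinction. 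You instead split by regime, delegating $\sum_n r_n<+\infty$ and $\sum_n r_n(1-r_n)=+\infty$ to Proposition~\ref{th-r_n-finite-convergent} and Theorem~\ref{teo:secondpart} (legitimate, since neither depends on Theorem~\ref{th-norm}), and in the remaining regime you divide each mode by the deterministic product $\beta_n^{(k)}=\prod_{j= n_0}^{n-1}\bigl(1-r_j(1-\omega^k)\bigr)$, so that $B_n^{(k)}=A_n^{(k)}/\beta_n^{(k)}$ is an exact bounded complex martingale; a.s.\ convergence of $B_n^{(k)}$ together with convergence of $|\beta_n^{(k)}|^2$ then yields convergence of each $|A_n^{(k)}|^2$, hence of the norm. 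What your route buys: a mode-by-mode statement (each $|A_n^{(k)}|$ converges, with the identified limit $L_k|B_\infty^{(k)}|^2$) and plain bounded-martingale convergence instead of Robbins--Siegmund in the hard case, which also makes transparent why only the modulus, and not $A_n^{(k)}$ itself, converges (the oscillation sits entirely in the phase of $\beta_n^{(k)}$). What the paper's route buys: uniformity (no regime split) and self-containedness (only Lemma~\ref{lemma-tec} and the unitarity of $D_\nper$ are used). Two small points you should make explicit: the lower bound on $|\beta_n^{(k)}|$ needed for boundedness of $B_n^{(k)}$ follows because each factor satisfies $|1-r_j(1-\omega^k)|^2=1-2(1-\mathrm{Re}\,\omega^k)r_j(1-r_j)\le 1$, so the partial products decrease to their strictly positive limit and are uniformly bounded away from zero; and since $\sum_n r_n<+\infty$ implies $\sum_n r_n(1-r_n)<+\infty$, your factorization already covers that case, making the appeal to Proposition~\ref{th-r_n-finite-convergent} redundant.
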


Notice that, although Theorem \ref{th-norm} holds regardless of the
behavior of $(r_n)_n$, the interpretation of $\vert\vert \vZ_n^{(c)}\vert\vert $, and in
general of $Z_n^{(c)}$, is meaningful only when the single processes
$Z_{n,l}$ within each cyclic class almost surely synchronize in the
limit, that is, by Corollary~\ref{th-sincro-periodic}, when $\sum_n
r_n = +\infty$. Indeed, under this condition, the component
$\vZ_n^{(3)}$ converges almost surely to zero and so we have
$Z_{n,l}\stackrel{a.s.}\sim Z_{n,l}^{(C)}$ for each $l$, that is
$Z_{n,l}$ is asymptotically closed to the element of $\vZ_n^{(c)}$
corresponding to its cyclic class.  Moreover, Theorem~\ref{th-norm}
becomes especially interesting when $\vZ_n^{(c)}$ does not converge 
with probability one, that is, by Corollary~\ref{th-sincro}, when
$\sum_n r_n (1-r_n)< +\infty$ and $\nper\geq 2$. Combining together
the above two considerations, in the next theorem we consider the
scenario under Assumption~\ref{ass:periodic}, and we show that each
process $(Z_{n,h}^{(c)})_n$, $h=0,\dots,\nper-1$, is asymptotically
close to the barrier-set $\{0, 1\}$ and the number of processes close
to a given barrier almost surely converges to a random variable not
concentrated in $\{0, \nper\}$.

\begin{theorem}[Limit set for the cyclic classes]
\label{th-limit-class-0-1}
Under Assumption~\ref{ass:periodic}, we have:
\begin{itemize}
\item[(a)] the limit set of each cyclic class is given by the
  barrier-set $\{0,1\}$ (in other words, we have 
  $Z_{n,h}^{(c)}(1-Z_{n,h}^{(c)})\stackrel{a.s.}\longrightarrow 0$ for
  each $h=0,\dots,\nper-1$);
\item[(b)] for any fixed $\epsilon\in(0,1/2)$, we have $card\{h:\,
  Z^{(c)}_{n,h}\geq 1-\epsilon\}\stackrel{a.s.}\longrightarrow
  N_\infty$ and $card\{h:\,Z_{n,h}^{(c)}\leq
  \epsilon\}\stackrel{a.s.}\longrightarrow \nper-N_\infty$, where
  $N_\infty$ is a random variable taking values in $\{0,1,\ldots,
  \nper\}$ with $P(N_\infty=0)+P(N_\infty=\nper)<1$.
\end{itemize}
\end{theorem}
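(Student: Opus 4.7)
The plan is to exploit the cyclic-shift structure of the aggregated process $\vZ_n^{(c)}$. Writing $C_h$ for the $h$-th cyclic class and $s_h=\sum_{l\in C_h}v_l$, a preliminary computation using $W\vv=\vv$ and the fact that $W^\top$ permutes the classes cyclically shows $s_h=1/\nper$ for every $h$, after which a direct calculation yields
\[
Z_{n+1,h}^{(c)} = (1-r_n)Z_{n,h}^{(c)} + r_n \vY_{n+1,h},\qquad E[\vY_{n+1,h}\mid\mathcal{F}_n]=Z_{n,h^*}^{(c)},
\]
where $\vY_{n+1,h}=\sum_{l\in C_h}(v_l/s_h)X_{n+1,l}$ and $h^*$ is the cyclic class whose nodes influence $h$. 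In vector form, $\vZ_{n+1}^{(c)}=((1-r_n)I+r_nP)\vZ_n^{(c)}+r_n\vM_{n+1}$ with $P$ a cyclic-shift permutation. Setting $\xi_{n,h}=Z_{n,h}^{(c)}(1-Z_{n,h}^{(c)})$, $V_n=\sum_h\xi_{n,h}$ and $\sigma_{n,h}^2=\mathrm{Var}(\vY_{n+1,h}\mid\mathcal{F}_n)$, a short algebraic manipulation gives the fundamental identity
\[
E[\xi_{n+1,h}\mid\mathcal{F}_n]=(1-r_n)\xi_{n,h}+r_n\xi_{n,h^*}+r_n(1-r_n)\big(Z_{n,h}^{(c)}-Z_{n,h^*}^{(c)}\big)^2-r_n^2\sigma_{n,h}^2,
\]
so that summing over $h$ and cyclically reindexing collapses the linear terms to $V_n$, producing $E[V_{n+1}\mid\mathcal{F}_n]=V_n+r_n(1-r_n)\sum_h(Z_{n,h}^{(c)}-Z_{n,h^*}^{(c)})^2-r_n^2\sum_h\sigma_{n,h}^2$.

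For part (a), I would apply the Robbins--Siegmund lemma: the positive perturbation is bounded by $\nper r_n(1-r_n)$, summable by Assumption~\ref{ass:periodic}, so $V_n$ converges almost surely to some $V_\infty\geq 0$ and the drift $\sum_n r_n^2\sum_h\sigma_{n,h}^2$ is finite almost surely. Because $\sum_n r_n=+\infty$, Corollary~\ref{th-sincro-periodic} gives $\vZ_n^{(3)}\stackrel{a.s.}\to\vzero$, so the within-class values $Z_{n,l}$ get uniformly close to $Z_{n,h}^{(c)}$; expanding $\sigma_{n,h}^2$ then yields $\sigma_{n,h}^2=c_h\xi_{n,h^*}+\eta_{n,h}$ with $c_h=\sum_{l\in C_h}(v_l/s_h)^2\geq c_*>0$ and $\eta_{n,h}\to 0$ a.s. The main obstacle is that $\sum_n r_n^2=\sum_n r_n-\sum_n r_n(1-r_n)=+\infty$, ruling out any crude summable bound on the error; instead I would restrict to the event $\{V_\infty>0\}$, on which eventually $V_n\geq V_\infty/2$ and the errors $|\eta_{n,h}|$ are dominated by the leading term, hence $\sum_h\sigma_{n,h}^2\geq c_*V_\infty/4$. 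Combined with $\sum_n r_n^2=+\infty$ this would force $\sum_n r_n^2\sum_h\sigma_{n,h}^2=+\infty$ on $\{V_\infty>0\}$, contradicting the Robbins--Siegmund conclusion; thus $V_\infty=0$ a.s., which proves (a).

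For part (b), fix $\epsilon\in(0,1/2)$. Part (a) guarantees that almost surely, for all $n$ large, every $Z_{n,h}^{(c)}$ lies in $[0,\epsilon)\cup(1-\epsilon,1]$, so $N_n^{(\epsilon)}:=\#\{h:Z_{n,h}^{(c)}\geq 1-\epsilon\}$ coincides eventually with the $\epsilon$-independent quantity $N_n^\star:=\#\{h:Z_{n,h}^{(c)}>1/2\}$, and the complementary count equals $\nper-N_n^{(\epsilon)}$. Since $N_n^\star$ takes only finitely many values, any subsequential limit $L$ along a subsequence $(n_k)$ forces $\sum_h Z_{n_k,h}^{(c)}=L+o(1)$; but $\sum_h Z_{n_k,h}^{(c)}=\nper\widetilde{Z}_{n_k}$, and the bounded martingale $\widetilde{Z}_n$ converges a.s. to $\widetilde{Z}_\infty$, so necessarily $L=\nper\widetilde{Z}_\infty$, uniquely pinning down an integer limit $N_\infty=\nper\widetilde{Z}_\infty$ and giving $N_n^{(\epsilon)}\to N_\infty$. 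Finally, if $P(N_\infty\in\{0,\nper\})=1$, then $\widetilde{Z}_\infty\in\{0,1\}$ a.s.\ and all $Z_{n,h}^{(c)}$ would converge a.s.\ to the same value in $\{0,1\}$, giving $\vZ_n^{(C)}\to\widetilde{Z}_\infty\vone$ and $\vZ_n^{(2)}=\vZ_n^{(C)}-\widetilde{Z}_n\vone\stackrel{a.s.}\to\vzero$, contradicting Theorem~\ref{teo:secondpart} under Assumption~\ref{ass:periodic}; hence $P(N_\infty=0)+P(N_\infty=\nper)<1$.
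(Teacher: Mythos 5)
Your argument is correct, but it follows a genuinely different route from the paper's. For part (a), the paper works at the level of the single nodes: it uses Lemma~\ref{lemma-tec} ($\sum_n r_n^2V_n<+\infty$ for $V_n=(\vone-W^\top\vZ_n)^\top W^\top\vZ_n$) together with the subsequence $(\tau_n)_n$ of times with $r_n>1/2$ and the summable oscillation bound of Lemma~\ref{lem:subseqRn} to get $V_{\tau_n}\to 0$, extends this to the whole sequence, and then transfers the conclusion to the classes via Lemma~\ref{lemma:VnTo0} and Theorem~\ref{teo:thirdpart}. You instead work directly on the aggregated process: the exact one-step identity for $\xi_{n,h}=Z_{n,h}^{(c)}(1-Z_{n,h}^{(c)})$ (which I checked, using $\vv^\top W^\top=\vv^\top$, the equal class masses $s_h=1/\nper$, and the conditional independence of the $X_{n+1,l}$), Robbins--Siegmund with the summable perturbation $r_n(1-r_n)$, and then a contradiction on $\{V_\infty>0\}$ built from $\sum_n r_n^2=\sum_n r_n-\sum_n r_n(1-r_n)=+\infty$ and the variance lower bound $\sigma_{n,h}^2\geq c_*\xi_{n,h^*}-|\eta_{n,h}|$, where $\eta_{n,h}\to 0$ again via Theorem~\ref{teo:thirdpart}. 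For part (b), the paper invokes Theorem~\ref{th-norm} and sets $N_\infty=\lim_n\Vert\vZ_n^{(c)}\Vert^2$, whereas you bypass the norm result entirely by observing $\sum_h Z_{n,h}^{(c)}=\nper\widetilde Z_n$ and pinning every subsequential limit of the integer-valued count to $\nper\widetilde Z_\infty$; the final claim $P(N_\infty=0)+P(N_\infty=\nper)<1$ is then obtained, as in the paper, by contradiction with Theorem~\ref{teo:secondpart} (equivalently Corollary~\ref{th-sincro}). What each approach buys: the paper's machinery ($\tau_n$, $\epsilon_n$, and Theorem~\ref{th-norm}, which holds without Assumption~\ref{ass:periodic}) is reused later in the proof of Theorem~\ref{teo:PeriodicCycle}, so it is not wasted effort there; your argument is more self-contained for this theorem, avoids Lemmas~\ref{lem:subseqRn} and \ref{lemma:VnTo0} and Theorem~\ref{th-norm}, and yields the additional identification $N_\infty=\nper\widetilde Z_\infty$, i.e.\ the martingale limit is a.s.\ a multiple of $1/\nper$ in this regime, which is extra information the paper's proof does not state explicitly.
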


\begin{remark}
We highlight that condition $\nper\geq 2$ is essential to have
$P(N_\infty=0)+P(N_\infty=\nper)<1$ in the above theorem, while it is
not used to get the other affirmations (that hold also for $\nper=1$).
However, $P(N_\infty=0)+P(N_\infty=\nper)<1$ is a crucial point, as
$\{N_\infty=0\}$ and $\{N_\infty=\nper\}$ are the two events on which
the asymptotic synchronization of the entire system still holds.
Note that, since Theorem~\ref{th-limit-class-0-1}(a), 
on these two events we have the almost sure 
{\em asymptotic polarization} of $(\vZ_n)_n$, 
that is in the limit the system almost surely
 synchronizes toward $0$ or $1$. 
\end{remark}

Theorem~\ref{th-limit-class-0-1}(a) states that, under the assumed
conditions for $(r_n)_n$, when $n$ is large, $Z_{n,h}^{(c)}$ can only
be close to $1$ or $0$.  Roughly speaking, this means that at a large time-step 
there are some cyclic classes in
which the agents' personal inclinations are very close to one (``\emph{on}'') and other
classes in which they are very close to zero (``\emph{off}''). In other words, 
at sufficiently large time-steps, the cyclic classes
are {\em polarized} on the status \emph{on} or \emph{off}. 
Notice that the number of cyclic classes   
that are \emph{on} at a large time-step $n$ can be well
represented by $\vert\vert {\vZ}_{n}^{(c)}\vert\vert^2$.
(Indeed, this
fact and Theorem \ref{th-norm} are the keys to prove the convergence
stated in point (b) of Theorem \ref{th-limit-class-0-1}.)  However,
when $N_\infty\in\{1,\dots,\nper-1\}$, it is still not known if each
class converges or if it can switch from \emph{on} to \emph{off} and
vice versa infinitely often. To this end, let us first define the
processes $({\vX}_{n}^{(C)})_n$ and $({\vX}_{n}^{(c)})_n$ analogously
as what has been done with $({\vZ}_{n}^{(C)})_n$ and
$({\vZ}_{n}^{(c)})_n$ in~\eqref{def:Z_2} and~\eqref{eq:defZnB},
respectively, i.e. for each element $l\in V=\{1,\dots,N\}$,
\begin{equation*}
{X}_{n,l}^{(C)} = 
\sum_{l_1 \sim_c l} \frac{v_{l_1}}{
\sum_{l_2 \sim_c l} v_{l_2}
} X_{n,l_1},
\end{equation*}
and $\vX_n^{(c)} = \vX_n^{(C)}/\sim_c $ as the quotient of $\vX_n^{(C)}$
with respect to $\sim_c$.
Then, we can present the following asymptotic result.

\begin{theorem}[Clockwise dynamics and asymptotic periodicity]
  \label{teo:PeriodicCycle}
 Under Assumption~\ref{ass:periodic}, there exists an integer-valued
increasing sequence $(\sigma_n)_n$ such that, for any cyclic class
$h$, we have for $(\vZ_n)_n$ a clockwise dynamics in $(\sigma_n)_n$,
that is
\[
Z_{\sigma_{n},h-1}^{(c)} - Z_{\sigma_{n-1},h}^{(c)} \stackrel{a.s.}\longrightarrow 0
\quad\forall h=0,\dots,\nper-1,
\]
and a stationary  dynamics between the clock-times, that is 
\[
\sup_{m_1,m_2 \in \{\sigma_{n-1},\ldots,\sigma_{n}-1\}} \vert\vert  \vZ_{m_1} - \vZ_{m_2}
 \vert\vert 
\stackrel{a.s.}\longrightarrow 0.
\]
Moreover, we have 
$$
P(X_{\sigma_n, l_1}=X_{\sigma_n,l_2}\ \text{eventually})=1\quad\forall l_1\sim_c l_2
$$
and
$$
P(X_{\sigma_{n},h-1}^{(c)} = X_{\sigma_{n-1},h}^{(c)}\ \text{eventually})=1
\quad\forall h=0,\dots,\nper-1.
$$
If, in addition $\sum_n (1-r_n) < +\infty$, then, eventually
$\sigma_{n+1}= \sigma_{n}+1$ so that  
\[
 \vZ_{n+\nper} - \vZ_{n} \stackrel{a.s.}\longrightarrow \vzero
\qquad\text{and}\qquad
P(\vX_{n+\nper} = \vX_{n}\ \text{eventually})=1.
\]
\end{theorem}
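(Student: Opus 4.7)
The plan is to construct the clock-times $(\sigma_n)_n$ from the large-jump indices of $(r_n)_n$ and to combine the polarization result of Theorem~\ref{th-limit-class-0-1} with a conditional Borel--Cantelli argument driven by the bounded $L^2$-martingale $(\widetilde{Z}_n)_n$ from~\eqref{Z-tilde-dynamics}.

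First I would exploit $\sum_n r_n(1-r_n) < +\infty$ by splitting indices into \emph{small} ($r_n \leq 1/2$) and \emph{big} ($r_n > 1/2$) ones. The elementary bounds $r_n \leq 2 r_n(1-r_n)$ on small indices and $1 - r_n \leq 2 r_n(1-r_n)$ on big indices give $\sum_{k\text{ small}} r_k < +\infty$ and $\sum_{k\text{ big}} (1-r_k) < +\infty$, while $\sum_n r_n = +\infty$ forces the set of big indices to be infinite. Letting $\tau_n$ be the $n$-th big index and setting $\sigma_n := \tau_n + 1$, every transition inside $\{\sigma_{n-1}, \ldots, \sigma_n - 1\}$ is driven by a small value of $r$, so telescoping with $|Z_{k+1,l} - Z_{k,l}| \leq r_k$ yields the stationarity $\sup_{m_1, m_2 \in \{\sigma_{n-1}, \ldots, \sigma_n - 1\}} \| \vZ_{m_2} - \vZ_{m_1} \| \to 0$ a.s. At the clock-time itself, $|Z_{\sigma_n, l} - X_{\sigma_n, l}| \leq 1 - r_{\sigma_n - 1} \to 0$ a.s.

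Next I would analyze $p_{\sigma_n - 1, l} := E[X_{\sigma_n, l} \, \vert \, \mathcal{F}_{\sigma_n - 1}] = \sum_{l_1} w_{l_1, l} Z_{\sigma_n - 1, l_1}$. The cyclic structure of $W^\top$ restricts the non-zero summands to $l_1$ in the cyclic class ``one ahead'' of the class $h$ of $l$; invoking the within-class synchronization $\vZ_n^{(3)} \to \vzero$ from Theorem~\ref{teo:thirdpart} (which applies since $\sum_n r_n = +\infty$) together with $\sum_{l_1} w_{l_1, l} = 1$ gives $p_{\sigma_n - 1, l} \approx Z^{(c)}_{\sigma_n - 1, h+1} \approx Z^{(c)}_{\sigma_{n-1}, h+1}$, where the last step uses the stationarity of the previous paragraph. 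Combining this with $Z_{\sigma_n, l} \approx X_{\sigma_n, l}$ and the control of the Bernoulli noise below, then passing to class-averages, produces the clockwise relation $Z^{(c)}_{\sigma_n, h-1} - Z^{(c)}_{\sigma_{n-1}, h} \to 0$ a.s.

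The hard part will be bounding the Bernoulli noise $X_{\sigma_n, l} - p_{\sigma_n - 1, l}$ and obtaining the ``eventually equal'' statements for $\vX$. My main tool is the bounded $L^2$-martingale $(\widetilde{Z}_n)_n$: its a.s.\ finite quadratic variation yields $\sum_n r_n^2 \sum_l v_l^2 \, p_{n, l}(1 - p_{n, l}) < +\infty$ a.s., and restricting to big indices (where $r \to 1$) extracts the crucial summability $\sum_n p_{\sigma_n - 1, l}(1 - p_{\sigma_n - 1, l}) < +\infty$ a.s. Since $\min(p, 1-p) \leq 2 p(1-p)$, the conditional Borel--Cantelli lemma then forces $X_{\sigma_n, l}$ to eventually equal the rounding of $p_{\sigma_n - 1, l}$ (i.e.\ $0$ if $p < 1/2$, $1$ otherwise), a value that, by Theorem~\ref{th-limit-class-0-1}(a), is well-defined asymptotically and depends only on the cyclic class of $l$; this immediately yields $X_{\sigma_n, l_1} = X_{\sigma_n, l_2}$ eventually for $l_1 \sim_c l_2$, and the identity $X^{(c)}_{\sigma_n, h-1} = X^{(c)}_{\sigma_{n-1}, h}$ eventually follows by matching the rounded limits in the clockwise relation. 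Finally, if additionally $\sum_n (1 - r_n) < +\infty$, then $r_n \to 1$, every sufficiently large index is big, $\sigma_{n+1} = \sigma_n + 1$ eventually, and iterating the clockwise relation $\nper$ times gives $\vZ_{n+\nper} - \vZ_n \to \vzero$ a.s.\ and $\vX_{n+\nper} = \vX_n$ eventually a.s.
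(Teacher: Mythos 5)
Your proposal is correct and follows the same skeleton as the paper's proof: you build the clock-times exactly as in Lemma~\ref{lem:subseqRn} ($\tau_n$ = $n$-th index with $r_n>1/2$, $\sigma_n=\tau_n+1$), get the stationary dynamics by telescoping over the small indices, and obtain $\vert\vert\vX_{\sigma_n}-\vZ_{\sigma_n}\vert\vert\le 1-r_{\tau_n}\to 0$, then run the same chain ``action at $\sigma_n$ $\approx$ rounding of $[W^\top\vZ_{\tau_n}]_l$ $\approx$ $Z^{(c)}_{\tau_n,h+1}$ $\approx$ $Z^{(c)}_{\sigma_{n-1},h+1}$'' using Theorem~\ref{teo:thirdpart} for within-class synchronization and Theorem~\ref{th-limit-class-0-1}(a) for polarization. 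Where you genuinely diverge is in how the ``eventually'' statements are established. The paper introduces the events $A_n$ (each $X_{\sigma_n,l}$ equals the $1/2$-rounding of $[W^\top\vZ_{\tau_n}]_l$) and $B_n$ (within-class equality of actions), proves the deterministic inclusions $A_n\cap B^h_{X,n-1}(g)\subseteq B^{h-1}_{X,n}(g)$, and then shows $P\big(\bigcap_{m\ge n_0}(A_m\cap B_m)\big)\to 1$ by an explicit product of conditional probabilities, each factor bounded below by $\big(1-((1-r_{\tau_{m-1}})+\epsilon_{m-1})\big)^N$, using only $V_{\tau_n}\to 0$ plus the summability of $(1-r_{\tau_m})+\epsilon_m$. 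You instead extract from Lemma~\ref{lemma-tec} the stronger almost sure summability $\sum_n V_{\tau_n}<+\infty$ (since $r_{\tau_n}^2>1/4$) and apply the conditional (L\'evy) Borel--Cantelli lemma to the events $\{X_{\sigma_n,l}\neq\mathbbm{1}_{[1/2,1]}([W^\top\vZ_{\tau_n}]_l)\}$, whose conditional probabilities are $\min(p,1-p)\le 2p(1-p)$; this directly gives that $A_n$ holds eventually, and $B_n$ eventually then follows because, by Theorems~\ref{teo:thirdpart} and~\ref{th-limit-class-0-1}(a), the roundings agree within each cyclic class for large $n$. Both routes are valid under Assumption~\ref{ass:periodic}; yours is arguably more streamlined (one summability estimate plus Borel--Cantelli replaces the infinite-product computation and avoids conditioning on the whole history $A_{n_0,m}\cap B_{n_0,m}$), at the small cost of needing the conditional Borel--Cantelli lemma stated for the filtration $(\mathcal{F}_{\tau_{n+1}})_n$, since you condition on $\mathcal{F}_{\tau_n}$ while the event is $\mathcal{F}_{\tau_n+1}$-measurable; with that filtration choice made explicit, and with the $X^{(c)}$-matching step spelled out via $Z_{\tau_n,l}\approx Z_{\sigma_{n-1},l}\approx X_{\sigma_{n-1},l}$ (which you have available from stationarity and $r_{\tau_{n-1}}\to 1$), your argument closes completely, including the final case $\sum_n(1-r_n)<+\infty$, which you and the paper handle identically.
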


An important consequence of Theorem~\ref{th-limit-class-0-1} and
Theorem~\ref{teo:PeriodicCycle} is that Assumption~\ref{ass:periodic}
is sufficient for having a non-convergent asymptotic periodic behavior
of ${\vZ}_n$ with a strictly positive probability.  In addition, from
Theorem \ref{th-limit-class-0-1} we know that the limit set of each
cyclic class is $\{0,1\}$.  Then, when
$N_\infty\in\{1,\dots,\nper-1\}$, the asymptotic periodic dynamics of
the vector ${\vZ}_n^{(c)}$ presented in
Theorem~\ref{teo:PeriodicCycle} can be seen as a cycle over $\nper$
elements of $\{0,1\}^{\nper}$, obtained by rotating their elements
(see e.g. Example~\ref{example:no-sincro-r_n-infinite} where, on the
set $A$, we have $\vZ_{2n}^{(c)}\to (0,1)^{\top}$ and
$\vZ_{2n+1}^{(c)}\to (1,0)^{\top}$). 
Moreover, we note that the a.s. asymptotic synchronization of 
the random variables 
$X_{\sigma_n,l}$ inside the same cyclic class implies 
$X_{\sigma_n,l}\stackrel{a.s.}\sim X_{\sigma_n,l}^{(C)}$  
for each $l$, that is $X_{n,l}$ is asymptotically close to $X_{\sigma_n,h}^{(c)}$,  
where the index $h$ indicates the cyclic class of $l$. Therefore, for a large $n$, at 
time-step  $\sigma_n$, all the agents of the same cyclic class perform the same action 
($0$ or $1$) and these performed actions are periodic. Further, 
we point out that the sequence $(\sigma_n)_n$ is explicitely defined in terms of 
the values of the sequence $(r_n)_n$ as described at the beginning of the proof of Theorem \ref{teo:PeriodicCycle}.\\

In order to better understand the regime identified by Theorem \ref{teo:PeriodicCycle},
we have added Figure \ref{plot} and Figure \ref{plot-X} representing a setting 
in which the assumptions of Theorem \ref{teo:PeriodicCycle} hold
and the status of the agents' inclinations 
(blu = close to 0 = \emph{off}, red = close to 1 = \emph{on}) 
and the actions performed by the agents (blu = 0, red = 1)
inside each cyclic class  
are shown at different time-steps.
Specifically, we consider a network with $N=120$ processes 
connected by an interacting matrix $W$ (randomly) generated
to have period $\nper=6$.
In the two figures the agents belonging to the same cyclic class
have been positioned closed to each others,
in order to visualize the a.s. (partial) synchronization
within each cyclic class.
The sequence $(r_n)_n$ has been chosen such that
$\sum_{n\notin(4m)_m}r_n < +\infty$ and
$\sum_{n\in(4m)_m}(1-r_n)< +\infty$,
which means that $\sigma_{n+1}-\sigma_{n}=4$.
Therefore, as described in Theorem \ref{teo:PeriodicCycle},
we can see from Figure \ref{plot} that, for a large $n$, the agents' inclinations 
are synchronized inside the cyclic classes and 
present a clockwise periodic behavior along the subsequence $(\sigma_n)_n$,
while they remain in the same previous status
for time-steps in $\{\sigma_{n-1}+1,\ldots,\sigma_{n}-1\}$. 
Moreover, in Figure \ref{plot-X}, we can see that, for a large $n$, also the 
agents' actions at the clock-times $\sigma_n$ are  
synchronized inside the cyclic classes and  
present a clockwise periodic behavior along the subsequence $(\sigma_n)_n$.

\begin{figure}
\centering
\includegraphics[scale=0.20]{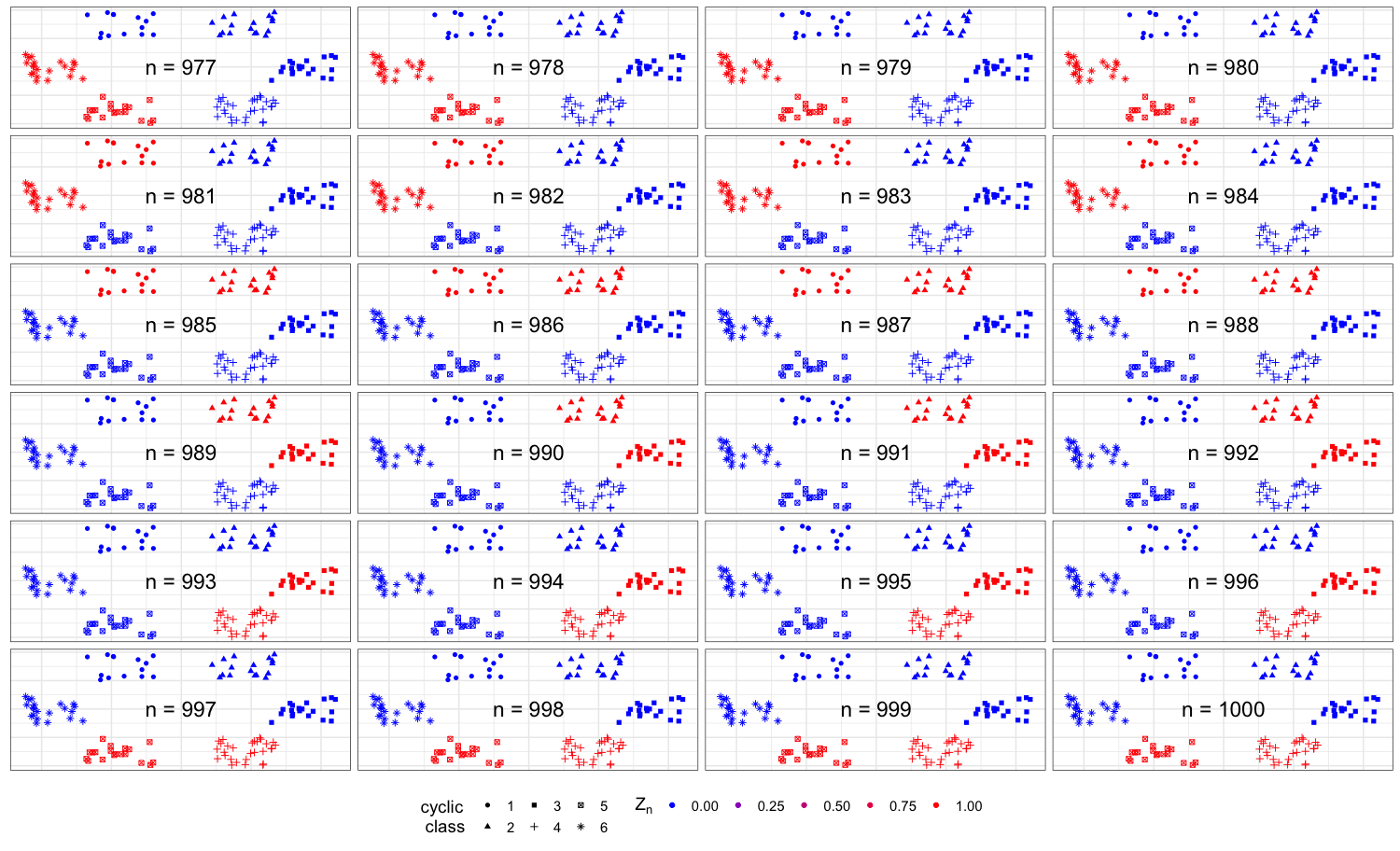}
\caption[]{Each panel represents the agents' inclinations 
at different consecutive time-steps
(large enough such that the asymptotic regime described in 
Theorem \ref{teo:PeriodicCycle} can be observed). 
Any vertex $h\in V$ is represented by a specific point in each panel, 
where its color indicates the value of $Z_{n,h}\in(0,1)$ 
and its symbol represents its cyclic class.
The sequence $(r_n)_n$ is such that $r_n=(1-cn^{-\gamma})$ when $n\in(4m)_m$ and 
$r_n=cn^{-\gamma}$ otherwise, with $c=1$ and $\gamma=3.7$.}
\label{plot}
\end{figure}

\begin{figure}
\centering
\includegraphics[scale=0.20]{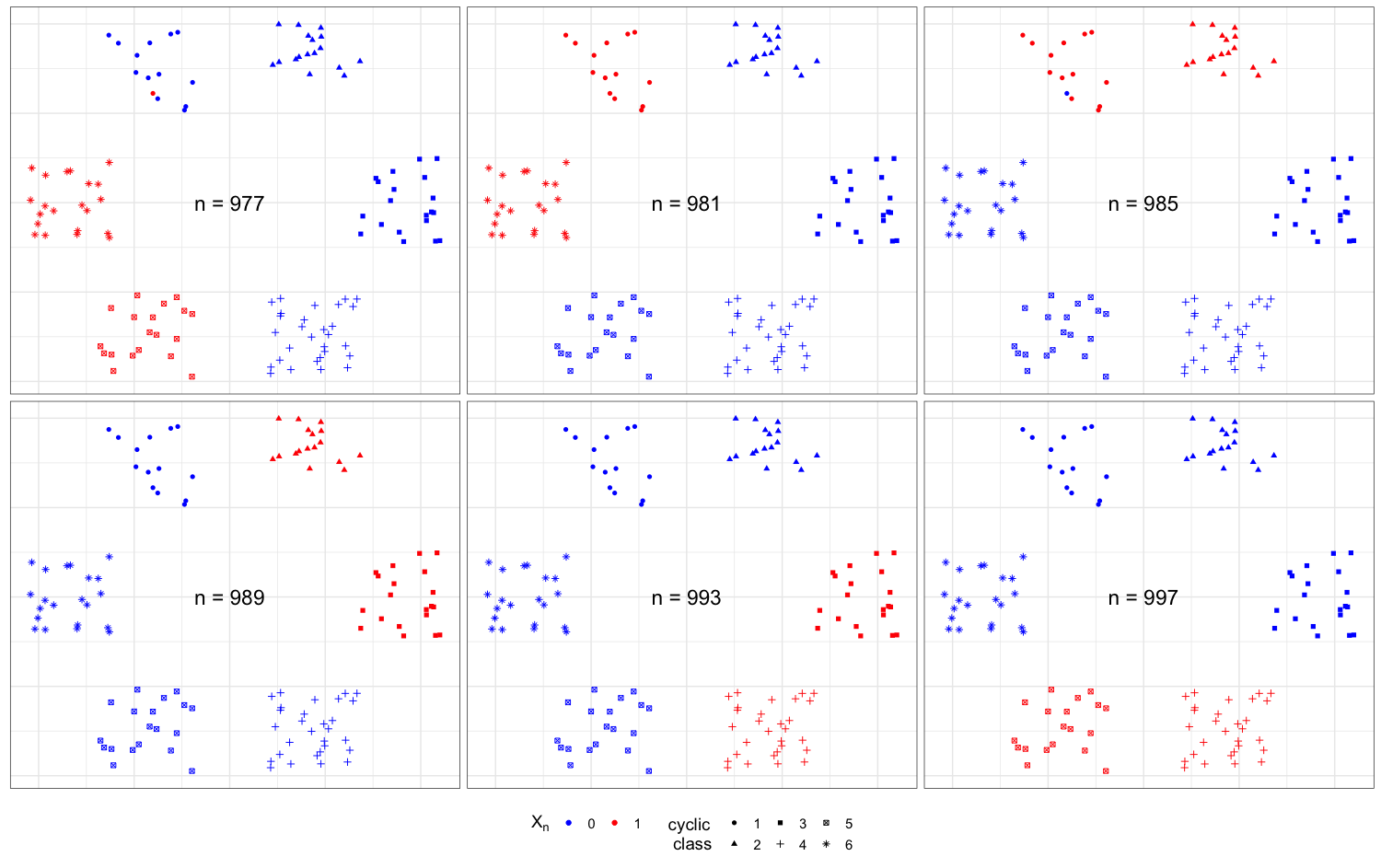}
\caption[]{Each panel represents the agents' actions 
at different clock-times $\sigma_n$ 
(with $n$ large enough such that the asymptotic regime described in 
Theorem \ref{teo:PeriodicCycle} can be observed). 
Any vertex (agent) $l\in V$ is represented by a specific point in each panel, 
where its color indicates the value of $X_{n,l}\in\{0,1\}$  (action performed by the agent) 
and its symbol represents its cyclic class.
The sequence $(r_n)_n$ is such that $r_n=(1-cn^{-\gamma})$ when $n\in(4m)_m$ and 
$r_n=cn^{-\gamma}$ otherwise, with $c=1$ and $\gamma=3.7$.}
\label{plot-X}
\end{figure}


\section{Some complements}\label{sec:additional}
We here collect some complements to the above results.

\subsection{Empirical means}
As in \cite{ale-cri-ghi-WEIGHT-MEAN}, we can consider the weighted average of
times in which the agents adopt ``action 1'', i.e. the stochastic
processes of the {\em weighted empirical means} $\{(N_{n,l})_{n}:\,
l\in V\}$ defined, for each $l\in V$, as $N_{0,l}=0$ and, for any
$n\geq 1$,
\begin{equation*}
N_{n,l}=\sum_{k=1}^{n}q_{n,k} X_{k,l}\,,\quad\hbox{
where }\quad q_{n,k}=\frac{a_k}{\sum_{\ell=1}^na_\ell},
\end{equation*}
with $(a_k)_{k\geq 1}$ a suitable sequence of strictly positive real
numbers.  In particular, when $a_k=1$ for each $k$, then the processes
$\{(N_{n,l})_n:\, l\in V\}$ simply coincide with the empirical means
associated to the processes $\{(X_{n,l})_n:\, l\in V\}$. Instead, if,
according to the principle of reinforced learning, we want to give
more ``weight'' to the current, or more recent, experience, we can
choose $(a_k)_{k\geq 1}$ increasing.  \\ \indent When the process
$(\vZ_n)_n$ almost surely converges toward a random variable, say
$\vZ_\infty$, we have
$$
E[\vX_{n+1}\vert \mathcal{F}_n]=W^\top\vZ_n \stackrel{a.s.}\longrightarrow
W^\top\vZ_\infty\,.
$$ Therefore, assuming
$q_{n,n}=\frac{q}{n^{\nu}}+O\left(\frac{1}{n^{2\nu}}\right)$, with
$q>0$ and $0<\nu\leq 1$ and applying
Lemma~B.1 in \cite{ale-cri-ghi-MEAN}, with the same computations done
in the proof of Theorem~3.1~in \cite{ale-cri-ghi-WEIGHT-MEAN}, we
obtain that
$$
\vN_{n}=[N_{n,1},\dots,N_{n,N}]^\top
\stackrel{a.s.}\longrightarrow W^{\top}\vZ_\infty\,.
$$ When the entire system almost surely synchronizes in the limit, we
have $\vZ_\infty=Z_\infty\vone$ and so, since $W^\top\vone=\vone$, we
get $W^\top\vZ_\infty=Z_\infty\vone$, that is also the weighted
empirical means of the entire system almost surely synchronizes in the
limit toward $Z_\infty$.  \\

In the case discussed in Subsection~\ref{subsec:periodic_no_synchro} under Assumption
\ref{ass:periodic}, we can easily prove
that there is the same form of partial almost sure asymptotic
synchronization for the processes of the weighted empirical means.
Indeed, if we take $l_1$ and $l_2$ in the same cyclic class $h$, we 
can apply the same arguments as above to the difference
$(N_{n,l_2}-N_{n,l_1})_n$, provided that
$E[X_{n+1,l_2}-X_{n+1,l_1}\vert \mathcal{F}_n]=
\sum_lw_{l,l_2}Z_{n,l}-\sum_lw_{l,l_1}Z_{n,l}\stackrel{a.s.}\longrightarrow
0$. In order to show this fact, we note that, for each
$l_{*}\in\{l_1,l_2\}$, the term $w_{l,l_{*}}=[W^\top]_{l_*,l}$ is not
null only if $l$ belongs to the cyclic class $(h+1)$ (recall that
$\nper\geq2$ under Assumption \ref{ass:periodic}), and hence we have
$$
\begin{aligned}
\sum_lw_{l,l_{*}}Z_{n,l} &&=&
\sum_{l \sim_c l_{*}}w_{l,l_{*}}(Z_{n,l}-Z_{n,h+1}^{(c)})+ Z_{n,h+1}^{(c)}\\
&&\leq& 
\max_{l \sim_c l_{*}}\{\vert Z_{n,l}-Z_{n,h+1}^{(c)}\vert \} + Z_{n,h+1}^{(c)}
\stackrel{a.s.}\sim Z_{n,h+1}^{(c)},
\end{aligned}$$ where the last step follows by the almost sure asymptotic
synchronization of the processes $(Z_{n,l})_{n}$ within the same
cyclic class $(h+1)$.\\

\indent Regarding the almost sure convergence of the processes
$\{(N_{n,l})_{n}:\,l\in V\}$ instead, nothing can be said in general
if we consider only the conditions in Assumption \ref{ass:periodic}.
Indeed, we may have either regimes with almost sure convergence and
regimes with non-almost sure convergence, according to the specific
sequence $(r_n)_n$ considered. The following two examples show both
these scenarios.

\begin{example}\label{empirical_means_non_convergent}
Fix $a\in(0,\frac{1}{2})$ and set the sequence $(r_n)_n$ as follows:
$r_n=(1-u_n)$ if $n\in\{[a^{-k}]:\, k\geq 1\}$ and $r_n=u_n$
otherwise, where $(u_n)_n$ is an arbitrary sequence such that $\sum_n
u_n < +\infty$.  Naturally, this implies that $\sum_n r_n=+\infty$ and
$\sum_n r_n(1-r_n)<+\infty$, so that, assuming $W$ with 
$\nper= 2$, i.e. cyclic classes $\{0,\, 1\}$, 
we are under Assumption \ref{ass:periodic}, i.e. in the framework of
Subsection~\ref{subsec:periodic_no_synchro} with non-almost sure
convergence and only almost sure asymptotic synchronization within the
cyclic classes of the processes $\{Z_{n,l}:\, l\in V\}$.  Then, we can
apply Theorem~\ref{teo:PeriodicCycle}, with $\sigma_k=[a^{-k}]$, which
ensures that there exists a set $A$ with strictly positive probability
such that, for any $l$ in one of the two cyclic classes, say $h_1$,
\[
\sup_{n \in \{\sigma_{2k-1},\ldots,\sigma_{2k}-1\}} \vert 1-Z_{n,l}\vert 
\mathop{\stackrel{a.s.}\longrightarrow}_{k\to+\infty} 0, \qquad\mbox{and}\qquad
\sup_{n \in \{\sigma_{2k},\ldots,\sigma_{2k+1}-1\}} \vert Z_{n,l}\vert 
\mathop{\stackrel{a.s.}\longrightarrow}_{k\to+\infty} 0
\]
and, for any $l$ in the other cyclic class, say $h_2$,
\[
\sup_{n \in \{\sigma_{2k-1},\ldots,\sigma_{2k}-1\}} \vert Z_{n,l}\vert 
\mathop{\stackrel{a.s.}\longrightarrow}_{k\to+\infty} 0, \qquad\mbox{and}\qquad
\sup_{n \in \{\sigma_{2k},\ldots,\sigma_{2k+1}-1\}} \vert 1-Z_{n,l}\vert 
\mathop{\stackrel{a.s.}\longrightarrow}_{k\to+\infty} 0.
\]

Then, consider the processes of the simple empirical means, i.e.
$\{N_{m,l}=\frac{1}{m}\sum_{n=1}^m X_{n,l}:\, l\in V\}$; 
for any $l$ in the cyclic class $h_2$,
we can write
the following two relations along the sequences of $(\sigma_{2k})_k$
and $(\sigma_{2k+1})_k$:
\[\begin{aligned}
\frac{1}{\sigma_{2k}}\sum_{n=1}^{\sigma_{2k}}\mathbb{E}[X_{n,l}&\vert \mathcal{F}_{n-1}] =
\frac{1}{\sigma_{2k}}\sum_{n=1}^{\sigma_{2k}}
[W\vZ_{n-1}]_l\\
&\geq \left(\frac{\sigma_{2k}-\sigma_{2k-1}}{\sigma_{2k}}\right)
\inf_{n \in \{\sigma_{2k-1},\ldots,\sigma_{2k}-1\},\ l\in h_1}Z_{n,l}
\mathop{\stackrel{a.s.}\longrightarrow}_{k\to+\infty} (1-a),
\end{aligned}\]
\[\begin{aligned}
\frac{1}{\sigma_{2k+1}}\sum_{n=1}^{\sigma_{2k+1}}\mathbb{E}[X_{n,l}&\vert \mathcal{F}_{n-1}] =
\frac{1}{\sigma_{2k+1}}\sum_{n=1}^{\sigma_{2k+1}}
[W\vZ_{n-1}]_l\\ 
&\leq
\left(\frac{\sigma_{2k+1}-\sigma_{2k}}{\sigma_{2k+1}}\right)
\sup_{n \in \{\sigma_{2k},\ldots,\sigma_{2k+1}-1\},\ l\in h_1}Z_{n,l} + 
\left(\frac{\sigma_{2k}}{\sigma_{2k+1}}\right)
\mathop{\stackrel{a.s.}\longrightarrow}_{k\to+\infty} a,
\end{aligned}\] 
which imply that, on the set $A$, 
for any $l$ in the cyclic class $h_2$, $N_{\sigma_{2k},l}\longrightarrow
(1-a)$ and $N_{\sigma_{2k+1},l}\longrightarrow a$.  Therefore, in this
example the processes $\{N_{n,l}:\, l\in V\}$ do not converge almost
surely.
\end{example}

\begin{example}
Consider the framework of Example \ref{empirical_means_non_convergent}
with a different reinforcement sequence $(r_n)_n$; indeed, here we set
$r_n=(1-u_n)$ for any $n\geq 1$, where we recall that $\sum_n u_n<
+\infty$, and hence $\sum_n (1-r_n)< +\infty$.  Then, if we apply
Theorem~\ref{teo:PeriodicCycle} we have $\sigma_k=k$ for any $k\geq 1$,
recalling that on the set $A$, for any $l$ in the cyclic class $h_2$,
we have
$$
[W\vZ_{2n}]_{l}\leq \max_{i\in h_1}\{Z_{2n-1,i}\}\stackrel{a.s.}\longrightarrow 0,
\qquad\mbox{and}\qquad
[W\vZ_{2n-1}]_{l}\geq \min_{i\in h_1}\{Z_{2n-2,i}\} \stackrel{a.s.}\longrightarrow 1,
$$
and so
 \[
\begin{aligned}
&\frac{1}{\sigma_{k}}\sum_{n=1}^{\sigma_{k}}\mathbb{E}[X_{n,l}\vert \mathcal{F}_{n-1}] =
\frac{1}{k}\sum_{n=1}^{k}[W\vZ_{n-1}]_l\\
&=\
\frac{1}{k}\sum_{n=1}^{[k/2]}([W\vZ_{2n-1}]_{l}+[W\vZ_{2n}]_{l}) +
\mathbbm{1}_{\{k\ odd\}}\frac{[W\vZ_{k-1}]_l}{k}\\
&=\
\frac{1}{k/2}\sum_{n=1}^{[k/2]}
\left(\frac{[W\vZ_{2n-1}]_{l}+[W\vZ_{2n}]_{l}}{2}\right) + o(1)\\
&\stackrel{a.s.}\sim\
\frac{1}{k/2}\sum_{n=1}^{[k/2]}\left(\frac{1}{2}\right)\ \stackrel{a.s.}\sim\
\frac{1}{2}.
\end{aligned}
\]
Therefore, in this example, the processes $\{N_{n,l}:\, l\in V\}$ converge almost surely.
\end{example}


\subsection{Reducible interaction matrix}
We here explain how to deal with the case of a {\em reducible}
interaction matrix $W$, as done in \cite{ale-cri-ghi} (and in
\cite{ale-ghi} for a similar approach to systems of interacting
generalized Friedman's urns).  \\ \indent We recall that in Network
Theory a strongly connected component of a directed graph is a maximal
sub-graph in which each node may be reached by the others in that
sub-graph.  The set of strongly connected components forms a partition
of the set of the graph nodes.  The assumption of irreducibility in
this context means that the entire directed graph is strongly
connected.  When this assumption is not verified, we can consider each
connected component (related to the graph seen as undirected) and, for
each of these components, we can analyze its condensation graph. We
remind that the condensation graph of a connected, but not strongly
connected, directed graph $\mathcal G$ is the directed acyclic graph
$C_{\mathcal G} $, where each vertex is a strongly connected component
of $\mathcal G$ and an edge in $C_{\mathcal G}$ is present when there
are edges in $\mathcal G$ between nodes of the two strongly connected
components.  In our context, the strongly connected components
corresponding to leaves vertices in $C_{\mathcal G}$ are made by
network nodes that are not influenced by the nodes of the other
strongly connected components. \\ \indent Moreover, there is an
equivalent interpretation in the Markov chain theory, where the
concept of strongly connected component is replaced by that of
communicating class.  Hence, the strongly connected components
corresponding to leaves vertices in $C_{\mathcal G}$ are the recurrent
communicating classes.  \\ \indent We employ a particular
decomposition of $W^\top$ that individuates its recurrent
communicating classes. The same decomposition has been applied to the
interaction matrix in~\cite{ale-ghi,ale-cri-ghi}.  More precisely,
denoting by $m$, with $1\leq m\leq N$, the multiplicity of the
eigenvalue $1$ of $W^\top$, the reducible matrix $W^\top$ can be
decomposed (up to a permutation of the nodes) as follows:
\begin{equation}\label{def:Wtop}
W^\top =
\begin{pmatrix}
U_1&0&\ldots&0& 0\\
0&U_2&0 &\ldots &0&\\
\ldots &\ldots &\ldots &\ldots &\ldots \\
0&0&\ldots &U_m&0\\
U_{1,f}&U_{2,f}&\ldots &U_{m,f}&U_{f}
\end{pmatrix},
\end{equation}
where
\begin{itemize}
\item[(i)] $\{U_s;\,1\leq s\leq m\}$ are irreducible $N_s\times
  N_s$-matrices with leading eigenvalue equal to $1$, that identify
  the recurrent communicating classes;
\item[(ii)] (if $\sum^{m}_{s=1} N_s\leq N-1$) $U_{f}$ is a
  $N_{f}\times N_{f}$-matrix, that contains all the transient
  communicating classes;
\item[(iii)] (if $\sum^{m}_{s=1} N_s\leq N-1$)
$\{U_{s,f};\,1\leq s\leq m\}$ are $N_s\times N_{f}$-matrices.
\end{itemize}
Obviously, when $\sum^{m}_{s=1} N_s=N$ we have $N_f=0$ and hence the
elements in $\{U_{s,f};\,1\leq s\leq m\}$ and $U_{f}$ do not exist.
This occurs when all the classes are closed and recurrent (leaves
vertices without parents) and hence the state space can be partitioned
into irreducible and disjoint sub-spaces.  In the particular case of a
$W^\top$ irreducible, considered previously in this paper, there is
only one closed and recurrent class and hence $m=1$, $N_1=N$ and
$N_f=0$.  \\ \indent Summing up, the structure of $W^\top$ given
in~\eqref{def:Wtop} leads to a natural decomposition of the graph in
different sub-graphs $\{G_s;\,1\leq s\leq m\}$ associated to the
sub-matrices $\{U_s;\,1\leq s\leq m\}$ and $G_f$ associated to $U_f$.
In addition, from~\eqref{interacting-1-intro} and~\eqref{def:Wtop}, we
can deduce that, for each $1\leq s\leq m$, the nodes in $G_s$ are not
influenced by the nodes in the rest of the network, and hence the
dynamics of the processes in $G_s$ can be fully established by
considering only the correspondent irreducible sub-matrix $U_s$.
Hence, applying the results presented in this paper to each sub-graph
$G_s$, it is possible to characterize the first-order dynamics of the
nodes in it.  \\ \indent Concerning the sub-graph $G_{f}$, we first
note that this is composed by the union of all the transient classes.
These classes are not independent of the behavior of the rest of the
network. More precisely, the condensation graph shows the conditional
dependence in the reverse order: starting from the leaves
$\{G_s;\,1\leq s\leq m\}$, whose dynamics may be computed
independently from the rest, the behavior of another strongly
connected component is always independent of its parent vertices in
the condensation graph, given the dynamics of its children vertices,
while the children vertices have an effect on it. For
  instance, if we are in the standard scenario when
  $\sum_nr_n=+\infty$ and $\sum_n r_n^2<+\infty$, then all the agents’
  inclinations in the same $G_s$ almost surely synchronize toward a
  random limit $Z_{\infty,s}$ and each of the agents' inclinations in
  $G_f$ converges almost surely toward a suitable convex combination
  of the limits $\{Z_{\infty,s};\, 1 \leq s \leq m\}$ of the processes
  related to the vertices in $\{G_s;\, 1 \leq s \leq m\}$
  (see~\cite{ale-cri-ghi}). Hence, they do not necessarily
  synchronize.

  \subsection{Non-stochastic interaction matrix
    (the case of a non-homogeneous ``forcing input'' toward zero)} 
We here assume the non-negative interaction matrix to be such that
$W^\top \vone=\vd\ (\neq \vone)$, with $d_l\leq 1$ $\forall l\in V$,
as $d_l>1$ could lead to $[W^\top \vZ_{n}]_l>1$, which is not
compatible with the original definition of the model in
\eqref{interacting-1-intro}.  However, the considerations below may
also be applied to models that only satisfy \eqref{eq:cond-mean} and
not \eqref{interacting-1-intro}, allowing the random variables
$X_{n,l}$ and $Z_{n,l}$ to take values bigger than $1$, when $[W^\top
  \vone]_l >1$.  Moreover, we assume to be in the interesting case
when $\sum_n r_n=+\infty$ and, to avoid unessential complications, we
take $W^\top$ irreducible.\\ \indent Under these assumptions, by the
Frobenius-Perron theory, the (real) leading eigenvalue $\lambda^*$ of
$W^\top$ is strictly less than $1$, and we have the associated
eigenvectors $\vu$ and $\vv$, with strictly positive entries and 
such that $\vv^\top\vone=1$ and $\vu^\top\vv=1$, that now
satisfy the relations $W^\top \vu=\lambda^*\vu$ and $\vv^\top
W^\top=\lambda^*\vv^\top $, respectively.  Then, setting as before
$\widetilde{Z_n}=\vv^{\top}\vZ_n$, in this case we get
$E[\widetilde{Z}_{n+1}\vert \mathcal{F}_n] =
(1-r_n(1-\lambda^*))\widetilde{Z}_n$ and so the process
$(\widetilde{Z}_n)_n$ is not anymore a martingale. Now, the pivotal
marginale of the system is given by the process
$(\widetilde{Z}^*_n)_n$ defined as
$$ \widetilde{Z}^*_n =
\frac{\widetilde{Z}_n}{ \prod_{k=0}^{n-1}(1-r_k(1-\lambda^*))} =
\frac{\vv^\top \vZ_n}{\prod_{k=0}^{n-1}(1-r_k(1-\lambda^*))}.
$$ This is a non-negative martingale and so it almost surely converges
toward a random variable $\widetilde{Z}_\infty^*$. This, together with
the fact that $\prod_{k=1}^{n-1}(1-r_k(1-\lambda^*))\to 0$ (because
$\sum_nr_n=+\infty$), implies that $(\widetilde{Z}_n)_n$ converges
almost surely to zero. Since $\vv$ has strictly positive entries, this
is possible only if $(\vZ_n)_n$ converges almost surely to $\vzero$.
Therefore, the synchronization result $\vZ_n \stackrel{a.s.}\sim
\widetilde{Z}_n\vone \stackrel{a.s.}\to Z_\infty\vone$ obtained before
under suitable assumptions might be replaced by the relation $\vZ_n
\stackrel{a.s.}\sim \widetilde{Z}_n\vu \stackrel{a.s.}\sim
\prod_{k=0}^{n-1}(1-r_k(1-\lambda^*)) \widetilde{Z}^*_\infty
\vu\stackrel{a.s.}\to \vzero$, with possibly $\vu \neq \vone$.
\\ \indent From an applicative point of view, the above behavior can
be explained by the presence of a forcing input that pushes agents to
action $0$. When $d_l = 0$, we simply have $P(X_{n+1,l} =
1\vert \mathcal{F}_n) = 0$ for each $n$ (and so $Z_{n,l}\stackrel{a.s.}\to
0$). Hence, we can assume $d_l>0$ for each $l$ and we can write
$$
P(X_{n+1,l}= 1\vert \mathcal{F}_n) = [W^\top \vZ_n]_l=
d_l [W_*^\top\vZ_n]_l + (1-d_l)q\qquad\mbox{with } q = 0,
$$ where, for each $l$, the column $l$ of $W_*$ is obtained taking the
column $l$ of $W$ divided by $d_l$ so that $W_*^\top\vone=\vone$, 
and $d_l<1$ for at least one $l$.  Note
that, when $\vd = d\vone$ with $d\in [0,1[$, we essentially have the
    model with forcing input $q$ equal to zero described in
    \cite{ale-cri-ghi, cri-dai-lou-min}, where the input acts
    homogeneously on all the agents of the network. Otherwise, when
    the vector $\vd$ has at least two different entries, the forcing
    input is not homogeneous, in the sense that it does not affect the
    agents in the same way.
    

\section{Proofs}\label{sec:proofs}
In this section, we describe the crucial ideas and 
the sketches of the proofs of the results presented in this work. 
All the details are collected in a separate supplementary material \cite{ale-cri-ghi-supplSPA1}.

First, let us introduce the following notation the we will adopt in this section: 
\begin{itemize}
\item[(1)] $n\in\mathbb{N}$ is the time-step index;
\item[(2)] $l,l_1,l_2,\ldots \in \{1,\ldots,N\}$ are spatial indexes
  related to the vertices of the network;
\item[(3)] the indices $j,j_1,j_2,\ldots \in \{0,\ldots,\nper-1\}$ are
  related to the $\nper$ complex roots of the unit, and they are
  always defined modules $\nper$;
\item[(4)] the indices $h,h_1,h_2,\ldots \in \{0,\ldots,\nper-1\}$ are
  related to the cyclic classes, and they are always defined modules $\nper$.
\end{itemize}
Moreover, given a complex matrix $A$, we denote by $Sp(A)$ the set of its
eigenvalues and by $A^*$ the conjugate transpose of $A$.
\\

\indent Before discussing the structures of the proofs, 
we would like to point out that
the following proofs can be considerably simplified in the
``standard'' case $\sum_n r_n = +\infty$ and $\sum_n r_n^2 < +\infty$.
However, our aim is not only to obtain sufficient conditions in order
to obtain the complete almost sure asymptotic synchronization of the
system, but to get sufficient and necessary conditions for complete or
partial almost sure asymptotic synchronization.  Therefore, we also
include in our analysis the case $\sum_n r_n<+\infty$ and the case
$\sum_n r_n^2=+\infty$, which is a completely different scenario
compared to the one considered
in~\cite{ale-cri-ghi,ale-cri-ghi-MEAN,ale-cri-ghi-WEIGHT-MEAN,cri-dai-min,dai-lou-min}. The
case $\sum_n r_n^2=+\infty$ has been considered in
\cite{cri-dai-lou-min}, but only for a very specific choice of $W$,
i.e.~the mean-field interaction (that is also included in the present
study as a special case), and with regard to the problem of asymptotic
polarization of the process $(\sum_{l=1}^NZ_{n,l}/N)_n$.  \\

\subsection{Spectral representation of the components of
  the decomposition of $(Z_n)_n$ (proof of Theorem~\ref{teo:sp-project})}
\label{sec:charSpectral}

\indent The key-point in order to prove the asymptotic results of Section
\ref{sec:decomp-conv-sincro} and Section \ref{sec:non-sincro} is the spectral
representation of the three components of the decomposition of
$\vZ_n$, that is stated in Theorem~\ref{teo:sp-project}.
To this end, let us denote with $P^{(2)}$ and $P^{(3)}$ (resp. ${P^{\dagger}}^{(2)}$ and ${P^{\dagger}}^{(3)}$) the matrix whose row (resp. column) vectors are the left (resp. right) eigenvectors of $W^\top$ associated to the eigenvalues with $|\lambda|=1$ (except $\lambda=1$) and $|\lambda|<1$, respectively.
Then, the proof of Theorem~\ref{teo:sp-project}, which is detailed in the supplementary material, can be here summarized in the following points:
\begin{itemize}
\item[(i)] By construction we imediately have that ${\vZ}_{n}^{(1)} = \vone \vv^\top \vZ_n= \widetilde{Z}_n \vone$.
\item[(ii)] Regarding ${\vZ}_{n}^{(2)}$, first we derive the analytic expressions of the $\nper$ eigenvalues and (both left and right) eigenvectors associated with $Sp(W^\top)=\{\lambda:|\lambda|=1\}$, i.e. for $j = 0, \ldots, {\nper}-1$,
\begin{itemize}
\item[(i.a)] $\lambda_{1,j}= \exp(\tfrac{2\pi i}{n_\text{per}} j)$,
\item[(i.b)] $[P^{(2)}]_{j,l_2}=[{\vv_{j}^\top}]_{l_2} = v_{l_2} \lambda_{1,j}^{-h} = e^{-i\frac{2\pi}{{\nper}}jh}  v_{l_2}$,
\item[(i.c)] $[{P^{\dagger}}^{(2)}]_{l_2,j}=[\vq_{j}]_{l_2} = \lambda_{1,j}^{h} = e^{i\frac{2\pi}{{\nper}}jh}$.
\end{itemize}
\item[(iii)] Secondly, we use the expressions derived at point (ii) to show that the element $(l_1,l_2)$ of the matrix $(\vone \vv^\top +{P^{\dagger}}^{(2)}P^{(2)})$ is equal to $v_{l_2} \nper$ if $l_1$ and $l_2$ belong to the same cyclic class, and zero otherwise. This will imply that
\[
\big[ (\vone \vv^\top 
+
{P^{\dagger}}^{(2)}P^{(2)})\vZ_n \Big]_{l}
= 
\nper \sum_{l_2 \sim_c l} v_{l_2}
Z_{n,l_2} .
\]
\item[(iv)] Then, we focus on the leading eigenvector $\vv$ and we show that the quantity $\sum_{l \in \text{cyclic class }h} v_{l}$ does not depend on the choice of the cyclic class $h$ and, in particular, $\sum_{l \in \text{cyclic class }h} v_{l}=\nper^{-1}$. This implies by definition \eqref{def:Z_2} that 
$$Z_{n,l}^{(1)}+Z_{n,l}^{(2)} = \nper \sum_{l_1 \sim_c l} v_{l_1}Z_{l_1},$$
which combined with point (iii) concludes the part on $\vZ_{n}^{(2)}$.
\item[(v)] Finally, the part on ${\vZ}_{n}^{(3)}$ simply follows by the properties of a basis of eigenvectors:
$$
 {\vZ}_{n}^{(3)} =
\vZ_n - ({\vZ}_{n}^{(1)}+{\vZ}_{n}^{(2)}) =
(I - (\vone \vv^\top 
+
{P^{\dagger}}^{(2)}P^{(2)} ) )
\vZ_n 
 =
{P^{\dagger}}^{(3)}P^{(3)} \vZ_n. 
$$
\end{itemize}

\subsection{Proofs of the asymptotic results stated in
  Section~\ref{sec:decomp-conv-sincro}} 

In this section, we employ the
matrices $P^{(2)}$, ${P^{\dagger}}^{(2)}$, $P^{(3)}$ and
${P^{\dagger}}^{(3)}$ and
the representation of the three components of the decomposition of
${\vZ_n}$ given in Theorem~\ref{teo:sp-project}. Moreover, we firstly provide 
an equation that will be used in the sequel.  By definition of $\vZ_n$
in~\eqref{eq:dynamics} we immediately obtain
\begin{equation}\label{eq:dynVect}
\vZ_{n+1} =
\vZ_{n} -r_n  (I - W^\top) \vZ_n + r_n \Delta \vM_{n+1},
\end{equation}
where $\Delta \vM_{n+1} = (\vX_{n+1}-W^\top \vZ_n)$ is the bounded
increment of a martingale.  Note that the conditional independence of
the entries of $\vX_{n+1}$ implies that
\[
E[\Delta {M}_{n+1,l_1} \Delta {M}_{n+1,l_2} \vert  \mathcal{F}_n ] = 
\begin{cases}
(1 - [W^\top\vZ_n]_{l_1}) [W^\top {\vZ}_{n}]_{l_2}
& \text{if }l_1=l_2,
\\
0 & \text{otherwise},
\end{cases}
\]
and hence, for any complex matrix $A \in \mathbb{C}^{k \times N}$,  
\begin{equation}\label{eq:boundM2}
\begin{aligned}
0 &\leq E[ \vert\vert A \Delta \vM_{n+1} \vert\vert ^2 \vert  \mathcal{F}_n] = 
E[\Delta \boldsymbol{M}_{n+1}^\top (A^*A) \Delta \boldsymbol{M}_{n+1}
  \vert  \mathcal{F}_n ] \\
&= (\boldsymbol{1}^\top - \boldsymbol{Z}_{n}^\top W) diag(A^*A)
W^\top \boldsymbol{Z}_{n} \\
&\leq \max_{l=1,\ldots,N} \{[A^*A]_{l,l}\}
(\boldsymbol{1}^\top - \boldsymbol{Z}_{n}^\top W) 
W^\top \boldsymbol{Z}_{n} 
= \max_{l=1,\ldots,N} \{[A^*A]_{l,l}\} V_n,
\end{aligned}
\end{equation}
where $[A^*A]_{l,l}\geq 0$ (by definition) and $V_n=(\boldsymbol{1} -
W^\top\boldsymbol{Z}_{n})^{\top} W^\top \boldsymbol{Z}_{n}$. To this
process $(V_n)_n$ is devoted the next lemma.

\begin{lemma}\label{lemma-tec}
Set
$V_n=
(\boldsymbol{1} - W^\top\boldsymbol{Z}_{n})^{\top} W^\top \boldsymbol{Z}_{n}
=\sum_{j=1}^N
[W^\top\boldsymbol{Z}_{n}]_j (1 - [W^\top\boldsymbol{Z}_{n}]_j)$.
Then, we have
\begin{equation}\label{eq:comp1}
  \textstyle{\sum_n} r^2_n V_n < +\infty\ \mbox{a.s.} 
  \qquad\mbox{and}\qquad
  \textstyle{\sum_n} r^2_n E[V_n] < +\infty.
\end{equation}
\end{lemma}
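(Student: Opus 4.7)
The plan is to exploit the bounded martingale $(\widetilde{Z}_n)_n = (\vv^\top \vZ_n)_n$ defined in~\eqref{Z-tilde-dynamics}, whose very existence hinges on $\vv^\top W^\top = \vv^\top$ and whose usefulness will hinge on the strict positivity of $\vv$ granted by Perron--Frobenius. From the dynamics of $(\widetilde{Z}_n)_n$ we have $\widetilde{Z}_{n+1}-\widetilde{Z}_n = r_n(Y_{n+1}-\widetilde{Z}_n)$, and the conditional independence of the entries of $\vX_{n+1}$ given $\mathcal{F}_n$ (each being Bernoulli with parameter $[W^\top\vZ_n]_l$) yields the explicit expression
$$
E\!\left[(\widetilde{Z}_{n+1}-\widetilde{Z}_n)^2\,\big|\,\mathcal{F}_n\right]
\;=\; r_n^2\sum_{l=1}^N v_l^{2}\,[W^\top\vZ_n]_l\bigl(1-[W^\top\vZ_n]_l\bigr).
$$

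The next step is to trade the weight $v_l^2$ for a uniform constant. Since $W$ is irreducible, Perron--Frobenius ensures $v_l>0$ for every $l$, so $c:=\min_l v_l^{2}>0$ and therefore the pointwise bound
$$
r_n^2\, V_n \;\leq\; c^{-1}\, E\!\left[(\widetilde{Z}_{n+1}-\widetilde{Z}_n)^2\,\big|\,\mathcal{F}_n\right]
$$
holds almost surely. Hence it suffices to prove that the predictable quadratic variation $A_\infty := \sum_n E[(\widetilde{Z}_{n+1}-\widetilde{Z}_n)^2\mid\mathcal{F}_n]$ is almost surely finite and has finite expectation.

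Both facts follow from the Doob decomposition of the bounded non-negative submartingale $(\widetilde{Z}_n^{2})_n$: writing $\widetilde{Z}_n^{2}=M_n+A_n$ with $M$ a martingale and $A$ predictable non-decreasing, the increments of $A$ are precisely $E[(\widetilde{Z}_{n+1}-\widetilde{Z}_n)^2\mid\mathcal{F}_n]$, and because $0\leq \widetilde{Z}_n\leq 1$ we obtain $E[A_\infty]=E[\widetilde{Z}_\infty^{2}]-E[\widetilde{Z}_0^{2}]\leq 1<+\infty$. This simultaneously gives $A_\infty<+\infty$ a.s., and, combined with the pointwise estimate above, delivers both $\sum_n r_n^2 V_n<+\infty$ a.s.\ and $\sum_n r_n^2 E[V_n]<+\infty$.

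I do not expect a real obstacle: the argument is essentially the $L^2$-boundedness of the leading-eigenvector martingale plus one elementary Bernoulli variance computation. The only genuinely delicate point is that one must multiply $V_n$ by the positive weights $v_l^{2}$ to identify it with a conditional variance, and for this the Perron--Frobenius positivity of $\vv$ (i.e.\ the irreducibility of $W$) is indispensable; without it the estimate $\min_l v_l^{2}>0$ would fail and $V_n$ could not be controlled by $A_\infty$.
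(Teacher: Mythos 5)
Your proposal is correct and follows essentially the same route as the paper: both identify $r_n^2 V_n$ (up to the factor $\min_l v_l^2>0$ from Perron--Frobenius) with the conditional variance of the increments of the bounded martingale $\widetilde{Z}_n=\vv^\top\vZ_n$, and then conclude from the fact that the predictable compensator of the bounded submartingale $(\widetilde{Z}_n^2)_n$ converges almost surely with finite expected limit. No gap to report.
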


\begin{proof} Consider the dynamics \eqref{Z-tilde-dynamics} of the
  process $(\widetilde{Z}_n)_n$, i.e.
$$ \widetilde{Z}_0=\boldsymbol{v}^{\top}\boldsymbol{Z}_0,\qquad
  \widetilde{Z}_{n+1}-\widetilde{Z}_n=r_n(Y_{n+1}-\widetilde{Z}_n),\qquad
  Y_{n+1}=\boldsymbol{v}^\top\boldsymbol{X}_{n+1},
  $$ and denote by $\langle \widetilde{Z}\rangle=(\langle
  \widetilde{Z}\rangle_n)_n$ the predictable compensator of the
  submartingale $\widetilde{Z}^2=(\widetilde{Z}_n^2)_n$.  Since
  $\widetilde{Z}$ is a bounded martingale, we have that $ \langle
  \widetilde{Z}\rangle_n$ converges a.s. and its limit $\langle
  \widetilde{Z}\rangle_\infty$ is such that
  $E[\langle\widetilde{Z}\rangle_\infty]<+\infty$.  Then, setting
  $\mathcal{F}_n=
  \sigma(\boldsymbol{Z}_0,\boldsymbol{X}_1,\dots,\boldsymbol{X}_n)$,
  since
\begin{equation*}
     \langle \widetilde{Z}\rangle_\infty =
 \sum_n ( \langle \widetilde{Z}\rangle_{n+1}-\langle \widetilde{Z}\rangle_{n})
 = \sum_n E[( \widetilde{Z}_{n+1}-\widetilde{Z}_{n})^2\vert \mathcal{F}_n],
= \sum_n  r^2_nE[(Y_{n+1}-\widetilde{Z}_n)^2\vert \mathcal{F}_n],
  \end{equation*}
 the result \eqref{eq:comp1} follows if we show that there
 exists a constant $c>0$ such that $c
 E[(Y_{n+1}-\widetilde{Z}_n)^2\vert \mathcal{F}_n]\geq V_n$.  To this end,
 we observe that 
  \begin{align*}
E[(Y_{n+1}-&\widetilde{Z}_n)^2\vert \mathcal{F}_n]=
E[(\boldsymbol{v}^\top\boldsymbol{X}_{n+1}-\widetilde{Z}_n)^2
  \vert \mathcal{F}_n]\notag =
E[(\boldsymbol{v}^\top\boldsymbol{X}_{n+1}-\boldsymbol{v}^{\top}W^\top
  \boldsymbol{Z}_n)^2\vert \mathcal{F}_n]\notag\\ &=
E[(\langle\boldsymbol{v},\boldsymbol{X}_{n+1}-W^\top
  \boldsymbol{Z}_n\rangle)^2\vert \mathcal{F}_n]\notag =
\sum_{j=1}^Nv_j^2E[(X_{n+1,j}-[W^\top\boldsymbol{Z}_{n}]_j)^2\vert \mathcal{F}_n]
\notag\\ &=
\sum_{j=1}^N v_j^2
    [W^\top\boldsymbol{Z}_{n}]_j(1-[W^\top\boldsymbol{Z}_{n}]_j) \notag
    \geq v_{min}^2 \sum_{j=1}^N [W^\top\boldsymbol{Z}_{n}]_j (1 -
         [W^\top\boldsymbol{Z}_{n}]_j)\\ 
         &= v_{min}^2 V_n.
  \end{align*}
  where we recall that the eigenvector $\boldsymbol{v}$ of $W$
  associated to the eigenvalue $\lambda=1$ has strictly positive
  entries and so we have $v_{min}=\min_j\{v_j\}>0$. 
  This concludes the proof.
\end{proof}

Now we can focus on the main asymptotic results on ${\vZ}_{n}^{(2)}$
and ${\vZ}_{n}^{(3)}$.  In particular, we start by proving when
${\vZ}_{n}^{(3)}$ asymptotically vanishes with probability one.

\begin{proof}[Proof of Theorem~\ref{teo:thirdpart}]
Note that when $N = \nper$, ${\vZ}_{n}^{(3)}\equiv \vzero$ by
definition (see Theorem~\ref{teo:sp-project}). Then, we assume $N >
\nper$.

\noindent ($\Leftarrow$) We will prove that $\sum_n r_n = +\infty$
implies $P^{(3)} \vZ_n \to \vzero$.
To do so, let us recall that  
$$
P^{(3)} W^\top = J_{N-\nper}P^{(3)},
$$
where $J_{N-\nper}$ contains the Jordan blocks associated to the eigenvalues 
$\{\lambda\in Sp(W): |\lambda|<1\}$.
Therefore, it is sufficient
to prove that $P_\lambda \vZ_n \to \vzero$, where $P_\lambda$ is the
generalized eigenspace associated to an eigenvalue $\lambda$ of
$W^\top$ with $\vert \lambda\vert  < 1$, i.e. $P_{\lambda}W^\top= J_{W,\lambda} P_{\lambda}$, and let us introduce as $\vert\vert  A\vert\vert _{p,q}$ the $(p,q)$-operator norm of a
complex matrix $A\in \mathbb{C}^{M\times N}$, whose properties are
described in the supplementary material \cite{ale-cri-ghi-supplSPA1}.  We will not prove that
$P_{\lambda} \vZ_n$ tends to $\vzero$ directly, since the presence of
the ones on the upper diagonal of $J_{W,\lambda}$ causes that the term
$\vert\vert J_{W,\lambda} \vert\vert _{2,2}$ is not close to $\vert \lambda\vert $, and in particular
it is bigger than $1$.  For this reason, we need to modify the Jordan
space to face this issue. Techinical details on this task have been relegated in the supplementary materials.
Then, from now on, we can consider to have 
\begin{equation}\label{eq:modJW}  
  \vert\vert J_{W,\lambda}\vert\vert _{2,2}
  \leq \frac{1 + \vert \lambda\vert }{2} < 1.
\end{equation}

\indent \emph{Almost sure convergence of $P_{\lambda}
  \vZ_{n}$ to zero:}\\ Now we apply the Jordan base
$P_{\lambda}$ to $\vZ_{n}$, and we show that $\vB_{n} =
P_{\lambda} \vZ_{n} $ tends to $\vzero$.  By
\eqref{eq:dynVect} we immediately obtain that
\begin{equation*}
  \begin{split}
\vB_{n+1} &=
P_{\lambda} \vZ_{n+1} =
\vB_{n} -r_n (I -  J_{W,\lambda}) \vB_n +
r_n P_{\lambda} \Delta \vM_{n+1} 
\\
&= 
(( 1-r_n) I + r_n J_{W,\lambda})  \vB_n +
r_n P_{\lambda} \Delta \vM_{n+1} .
\end{split}
\end{equation*}
Set $\vert\vert \vB_{n+1}\vert\vert ^2 = {\vB}_{n+1}^*\vB_{n+1}$ so that,  
by \eqref{eq:dynVect}, \eqref{eq:boundM2} and \eqref{eq:modJW}, we have
\begin{equation*}
\begin{aligned}
E [ \vert\vert \vB_{n+1}\vert\vert ^2 \vert  \mathcal{F}_n] &= 
\vert\vert (( 1-r_n) I + r_n J_{W,\lambda})  \vB_n \vert\vert ^2\\
&\qquad + r_n^2 
(\boldsymbol{1}^\top - \boldsymbol{Z}_{n}^\top W)
diag(P_{\lambda}^* P_{\lambda})
W^\top \boldsymbol{Z}_{n}
\\
& 
\leq
\Big( (1 - r_n)  + r_n \vert\vert  J_{W,\lambda} \vert\vert _{2,2} \Big)^2 \vert\vert \vB_n\vert\vert ^2 \\
&\qquad +
\Big( \max_{h=1,\ldots,N}
    \{[P_{\lambda}^* P_{\lambda} ]_{h,h}\} \Big) 
r_n^2
(\boldsymbol{1}^\top - \boldsymbol{Z}_{n}^\top W) 
W^\top \boldsymbol{Z}_{n}
\\
& 
\leq
\Big(1 - r_n (1 - \tfrac{1 + \vert \lambda\vert }{2}) \Big)^2 \vert\vert \vB_n\vert\vert ^2+
\hat{q} r_n^2 V_n.
\end{aligned}
\end{equation*}
As a consequence of Lemma~\ref{lemma-tec}, $\vert\vert \vB_{n+1}\vert\vert ^2$ is a 
non-negative almost supermartingale that converges almost
surely (see \cite{rob}). We are now going to prove that its almost
sure limit is zero. To this purpose, since $(\vert\vert \vB_{n}\vert\vert ^2)_n$ is
uniformly bounded by a constant and so $E[a.s.-\lim_n\vert\vert \vB_{n}\vert\vert ^2]=\lim_n
E[\vert\vert \vB_{n}\vert\vert ^2]$, it is enough to prove that
$E[\vert\vert \vB_{n}\vert\vert ^2]$ converges to zero.  To show this last fact, we
take the expected values on both sides of the above relation and we
obtain
\[
E [ \vert\vert \vB_{n+1}\vert\vert ^2 ] \leq (1-a r_n )^2  E[\vert\vert \vB_{n}\vert\vert ^2] +
\hat{q} r_n^2 E[V_n],
\]
where $a = \tfrac{1 - \vert \lambda\vert }{2}>0$. Hence, $y_n = E [
  \vert\vert \vB_{n}\vert\vert ^2 ] $ converges to $0$ by 
Lemma~\ref{lemma-tecP}.
This means that $\vert\vert \vB_n\vert\vert ^2_2$ converges almost surely to zero and so
$\vB_{n}$ converges almost surely to $\vzero$, which implies that also
$P_{\lambda} \vZ_n$ converges almost
surely to $\vzero$ and the proof of the $\Leftarrow$ implication is
completed.
\\

\noindent  ($\Rightarrow$) This part of the proof is reported in the supplementaty material. The main steps are the following:
\begin{itemize}
\item[(i)] first we prove that, when $P(T_0)<1$, condition $\sum_n r_n
  < +\infty$ implies $P(Z_{n,l_2}-Z_{n,l_1}\not\to 0)>0$
  (i.e.~$P(\limsup_n \vert Z_{n,l_2}-Z_{n,l_1}\vert >0)>0$),
  for any pair of vertices $(l_1,l_2)\in V\times V$, with $l_1\neq
  l_2$;
\item[(ii)] then we observe that, if $N>\nper$ and we choose $l_1$ and
  $l_2$ within the same cyclic class, the result of Step (i) implies
  $P({\vZ}_{n}^{(3)}\not\to\vzero)>0$.
\end{itemize}
\end{proof}

The almost sure convergence to zero of ${\vZ}_{n}^{(3)}$ has as a
direct consequence the almost sure asymptotic synchronization within
the cyclic classes as stated in Corollary~\ref{th-sincro-periodic}.
The brief proof of this result is reported in the supplementary material.\\

We now are going to show when ${\vZ}_{n}^{(2)}$ asymptotically
vanishes with probability one.

\begin{proof}[Proof of Theorem~\ref{teo:secondpart}]
By the representation given in Theorem~\ref{teo:sp-project} we have to prove
$$
\vZ^{(2)} = {P^{\dagger}}^{(2)} P^{(2)} \vZ_n \stackrel{a.s.}\longrightarrow \vzero
$$
Then, by the representation the left eigenvectors of $W^\top$ in $P^{(2)}$, it is sufficient to show that, fixed any $j_1\in 
\{1, \ldots, \nper-1\}$,
\[
\eta_n = {\vv_{j_1}^\top} \vZ_n
\stackrel{a.s.}\longrightarrow 0,
\]
if and only if $\sum_n r_n(1-r_n)=+\infty$.
Since $\vv^\top_{j_1} W^\top = \lambda_{1,j_1} \vv^\top_{j_1}$, 
we immediately obtain by \eqref{eq:dynVect} 
\[
\begin{aligned}
\eta_{n+1} &&=&\ \vv_{j_1}^\top 
\vZ_{n+1} =
\eta_{n} -r_n (1 -  \lambda_{1,j_1} )) \eta_n + r_n 
\vv^\top_{j_1} \Delta \vM_{n+1} \\
&&=&\ (1-r_n (1 -  \lambda_{1,j_1} ) )\eta_n + r_n 
\vv^\top_{j_1} \Delta \vM_{n+1} ,
\end{aligned}
\]
and hence
\begin{equation}\label{eq:modulus}
E[\vert \eta_{n+1}\vert ^2 \vert  \mathcal{F}_n]  = a_{j_1,n}\vert \eta_{n}\vert ^2 + C_{j_1,n},
\end{equation}
where
\begin{equation*}
\begin{cases}
a_{j_1,n} = \vert 1-r_n (1 -  \lambda_{1,j_1} ) \vert ^2,\\
C_{j_1,n} = r^2_n E\big[ \vert 
\vv_{j_1}^\top
\Delta \boldsymbol{M}_{n+1}\vert ^2 \big\vert  \mathcal{F}_n \big]
= r^2_n E[\Delta \boldsymbol{M}_{n+1}^\top 
\bar{\vv}_{j_1}
\vv_{j_1}^\top
\Delta \boldsymbol{M}_{n+1} \vert  \mathcal{F}_n ].
\end{cases}
\end{equation*}
Then, we can prove that $a_{j_1,n}=1-s_{j_1,n}$, with $0<s_{j_1,n}\leq 1$, and by \eqref{eq:boundM2} we also have that $0\leq C_{j_1,n}\leq r_n^2 V_n$ (see supplementary material for these technical details). Hence, combining these results in \eqref{eq:modulus} we obtain,
\begin{equation}\label{eq:r(1-r)<infty}
E[\vert \eta_{n+1}\vert ^2] \geq (1-s_{j_1,n})E[\vert \eta_{n}\vert ^2],
\end{equation}
and
\begin{equation}\label{eq:r(1-r)=infty}
E[\vert \eta_{n+1}\vert ^2 \vert  \mathcal{F}_n] \leq
(1-s_{j_1,n})\vert \eta_{n}\vert ^2 +
C_{j_1,n}.
\end{equation}
We are now ready to conclude.\\

\noindent ($\Leftarrow$) \emph{Case $\sum_n r_n(1-r_n) = +\infty$}\\ 
  Note that, since $0<s_{j_1,n}\leq 1$ and $\sum_n C_{j_1,n}
< +\infty$ almost surely by Lemma~\ref{lemma-tec}, we have from
\eqref{eq:r(1-r)=infty} that $\vert \eta_{n}\vert ^2$ is a non-negative almost
supermartingale that converges almost surely (see \cite{rob}).
Since $(\vert \eta_{n}\vert ^2)_n$ is uniformly bounded by a constant and so
$E[a.s.-\lim_n\vert \eta_{n}\vert ^2]=\lim_n E[\vert \eta_{n}\vert ^2]$, it is enough to prove
that $E[\vert \eta_{n}\vert ^2]$ converges to zero. To prove this fact, we take the
expected values on both sides of~\eqref{eq:r(1-r)=infty}, so that we
obtain
$$
E[\vert \eta_{n+1}\vert ^2] \leq
(1-s_{j_1,n})\vert \eta_{n}\vert ^2 +
E[C_{j_1,n}],
$$ which is of the form $ y_{n+1} \leq (1-s_n)y_n + \delta_n$.  Then,
the convergence $y_n \to 0$ follows from 
Lemma~\ref{lemma-tecP} 
once
we show that the assumptions are verified.  We have already checked
that $0<s_n\leq 1$ and that $\sum_n s_n = +\infty$, while $\sum_n
\delta_n < +\infty$ follows by Lemma~\ref{lemma-tec}.  \\

\noindent ($\Rightarrow$) \emph{Case $\sum_n r_n(1-r_n) <
  +\infty$}\\ Note that~\eqref{eq:r(1-r)<infty} is of the form $
y_{n+1} \geq (1-s_n)y_n$.  Since $0<s_{n}\leq 1$ and $\sum_n s_{n} <
+\infty$, by 
Lemma~\ref{lemma-tec-appendix-1} 
we have $\liminf_n y_{n}
> 0$ whenever $y_0>0$. Then, we do not have the $L^2$-convergence of
$\eta_n$ to $0$ (which, since $(\eta_n)_n$ is uniformly bounded by a constant, is a necessary
condition for the almost sure convergence of $\eta_n$ to $0$) when
$y_0=E[\vert \eta_0\vert ^2]>0$, that is $P( \vert \eta_0\vert >0)>0$. This last condition is
verified when $P(T_0)<1$, because of the expression of the components of $\vv_{j_1}$ given in 
Lemma~\ref{lem:eigvD_per} 
and the fact that $v_l>0$ for
each $l$.
\end{proof}

The almost sure convergence to zero of both ${\vZ}_{n}^{(2)}$ (see
Theorem~\ref{teo:secondpart}) and ${\vZ}_{n}^{(3)}$ (see
Theorem~\ref{teo:thirdpart}) implies the complete almost sure asymptotic
synchronization of the system, as stated in Corollary~\ref{th-sincro}.
The short proof of this result is reported in the supplementary material.

\subsection{Proofs of the results given in Section~\ref{sec:non-sincro}}

The simple proof of Proposition~\ref{th-r_n-finite-convergent} regarding the 
case $\sum_n r_n <+\infty$ can be found in the supplementary material.
\\

\indent Let us now pointing out in the following lemma the connection of the
modulus of the process $(\vZ_n^{(c)})_n$ defined in~\eqref{eq:defZnB}
and the non-orthogonal projection of $(\vZ_n)_n$ on the periodic
eigenspaces of $W^\top$, i.e. $Span(\vone, \vq_1, \dots,
\vq_{\nper-1})$. Also the proof of this result is reported in the supplementary material.

\begin{lemma}\label{lem:modulusSpectral}
Let $\vC_n = (\begin{smallmatrix}
\vv^\top 
\\
{P^{(2)}} 
\end{smallmatrix}
) \vZ_n$ be the coefficients of the non-orthogonal projection of
$\vZ_n$ on the left eigenvectors of $W$ associated to the eigenvalues
in $D_{\nper}$, i.e. $\{\lambda\in Sp(W):\vert \lambda\vert =1\}$.  Then, $
\vZ_n^{(c)} =\nper^{\frac{3}{2}} O_\nper \vC_n $, where $O_\nper$ is
the orthogonal matrix with $(O_\nper)_{j_1j_2}= \frac{1}{\sqrt{\nper}}
\lambda_{1,j_1-1}^{j_2-1}$, that is
\[
O_\nper = 
\frac{1}{\sqrt{\nper}}
\begin{pmatrix}
\lambda_{1,0}^{0}
& 
\lambda_{1,0}^{1}
&
\cdots
&
\lambda_{1,0}^{\nper-1}
\\
\lambda_{1,1}^{0}
& 
\lambda_{1,1}^{1}
&
\cdots
&
\lambda_{1,1}^{\nper-1}
\\
\vdots
&
\vdots
&
&
\vdots
\\
\lambda_{1,\nper-1}^{0}
& 
\lambda_{1,\nper-1}^{1}
&
\cdots
&
\lambda_{1,\nper-1}^{\nper-1}
\end{pmatrix}.
\]
Therefore, we have that $\vert\vert \vZ_n^{(c)}\vert\vert  =\nper^{\frac{3}{2}}
\vert\vert \vC_n\vert\vert $, where $\vert\vert \vC_n\vert\vert ^2={\vC}_{n+1}^*\vC_{n+1}$.
\end{lemma}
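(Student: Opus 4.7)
The plan is to realize $\vZ_n^{(c)}$ as a discrete Fourier transform of $\vC_n$: the $\nper$ cyclic classes will index the rows of a DFT-type matrix, and the $\nper$ eigenvalues on the unit circle will index its columns. Once this identification is made, the norm identity will follow from the unitarity of the DFT.

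First, using Theorem~\ref{teo:sp-project} together with the analytic expressions for the eigenvectors recalled in Section~\ref{sec:charSpectral}, I would write
\[
\vZ_n^{(C)} \;=\; \vZ_n^{(1)} + \vZ_n^{(2)} \;=\; \vone\,(\vv^\top\vZ_n) \;+\; \sum_{j=1}^{\nper-1} \vq_j\,(\vv_j^\top\vZ_n).
\]
Evaluating the $l$-th entry and plugging in $[\vq_j]_l = \lambda_{1,j}^{h}$ for $l$ in cyclic class $h$ (and $[\vone]_l = \lambda_{1,0}^{h}=1$), I would obtain
\[
[\vZ_n^{(C)}]_l \;=\; \sum_{j=0}^{\nper-1} \lambda_{1,j}^{h}\, [\vC_n]_{j+1}.
\]
The right-hand side depends on $l$ only through its cyclic class $h$, which re-confirms that $\vZ_n^{(C)}$ is constant on each class and gives the quotient $\vZ_n^{(c)}$ the explicit componentwise formula
\[
[\vZ_n^{(c)}]_{j_1} \;=\; \sum_{j_2=1}^{\nper} \lambda_{1,j_1-1}^{j_2-1}\, [\vC_n]_{j_2}, \qquad j_1=1,\dots,\nper.
\]

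Comparing this expression with the definition of $O_\nper$ given in the statement, I would identify the linear map $\vC_n\mapsto\vZ_n^{(c)}$ as a scalar multiple of $O_\nper$, absorbing the $\sqrt{\nper}$ factor that sits inside $O_\nper$ into the overall constant. Finally, to extract the norm identity $\vert\vert \vZ_n^{(c)}\vert\vert =c\,\vert\vert \vC_n\vert\vert $, I would invoke the unitarity of $O_\nper$, which follows immediately from the standard orthogonality relation for $\nper$-th roots of unity,
\[
\sum_{k=0}^{\nper-1} \lambda_{1,k}^{j_2-j_1} \;=\; \nper\,\mathbbm{1}_{\{j_1 \equiv j_2\;(\mathrm{mod}\;\nper)\}}.
\]
The computation is essentially symbolic and I do not expect any genuine obstacle; the only point requiring care is tracking the exact scaling constant that arises between $\vZ_n^{(c)}$ and $O_\nper\vC_n$, which is dictated by the normalizations chosen in Section~\ref{sec:charSpectral} for the biorthogonal pair $\{\vv_j,\vq_j\}_{j=0}^{\nper-1}$.
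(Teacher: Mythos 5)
Your route is essentially the paper's: you pass through the spectral representation of Theorem~\ref{teo:sp-project}, substitute the explicit entries $[\vq_j]_l=\lambda_{1,j}^{h}$ and $[\vv_j^\top]_l=v_l\lambda_{1,j}^{-h}$ from Lemma~\ref{lem:eigvD_per}, recognize the discrete Fourier matrix, and close with the roots-of-unity orthogonality (Lemma~\ref{lem:sumRoots}); the paper instead starts from the definition \eqref{eq:defZnB} together with Lemma~\ref{massOfBlocks} and checks the matrix identity entrywise, but the substance of the computation is identical. The one point you left unresolved --- ``tracking the exact scaling constant'' --- is exactly where you should not stop: your own display $[\vZ_n^{(c)}]_{j_1}=\sum_{j_2=1}^{\nper}\lambda_{1,j_1-1}^{j_2-1}[\vC_n]_{j_2}$ says precisely $\vZ_n^{(c)}=\sqrt{\nper}\,O_\nper\vC_n$, i.e.\ the constant is $\nper^{1/2}$, not the $\nper^{3/2}$ appearing in the statement. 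This is not a flaw in your argument but a slip in the lemma itself: in the paper's proof the last display evaluates $v_{l_2}\sum_{j_2=0}^{\nper-1}\lambda_{1,h_1-h_2}^{j_2}$ as $v_{l_2}$ when $h_1=h_2$, whereas Lemma~\ref{lem:sumRoots} gives $\nper\,v_{l_2}$, so the stated constant is off by a factor $\nper$ (one can confirm $\vZ_n^{(c)}=\sqrt{2}\,O_2\vC_n$ directly in Example~\ref{example:no-sincro-r_n-infinite}). The discrepancy is harmless downstream, since Theorem~\ref{th-norm} only needs $\vert\vert \vZ_n^{(c)}\vert\vert$ to be a fixed multiple of $\vert\vert \vC_n\vert\vert$ and $N_\infty$ in Theorem~\ref{th-limit-class-0-1} is defined through $\vert\vert \vZ_n^{(c)}\vert\vert^2$ itself, but in your write-up you should carry the constant through explicitly rather than absorb it into an unspecified $c$.
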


We can now present the proof of the almost sure convergence of
$\vert\vert \vZ_n^{(c)}\vert\vert $.

\begin{proof}[Proof of Theorem~\ref{th-norm}]
By Lemma~\ref{lem:modulusSpectral}, we have that $\vert\vert \vZ_n^{(c)}\vert\vert ^2
=\nper^{3} \vert\vert \vC_n\vert\vert ^2$, with $\vert\vert \vC_n\vert\vert ^2= {\vC}_{n+1}^*\vC_{n+1}$.
Then, it is sufficient to prove that $\vert\vert \vC_n\vert\vert ^2$ almost surely
converges, using the fact that
\[
\begin{pmatrix}
\vv^\top 
\\
{P^{(2)}} 
\end{pmatrix} W^\top
= D_{\nper}
\begin{pmatrix}
\vv^\top 
\\
{P^{(2)}} 
\end{pmatrix} .
\]
Indeed, by \eqref{eq:dynVect}, we immediately obtain that
\begin{equation*}
\vC_{n+1} =
\begin{pmatrix}
\vv^\top 
\\
{P^{(2)}} 
\end{pmatrix}
\vZ_{n+1} =
\vC_{n} -r_n (I -  D_{\nper}) \vC_n + r_n \begin{pmatrix}
\vv^\top 
\\
{P^{(2)}} 
\end{pmatrix} \Delta \vM_{n+1} .
\end{equation*}
Since $\vert\vert \vC_{n+1}\vert\vert ^2 = {\vC}_{n+1}^*\vC_{n+1} $, $D_{{\nper}}^*
D_{{\nper}} = I$ and defining $ C_{{\nper}}=D_{{\nper}} + D_{{\nper}}^* = 2
diag(\{\cos(\frac{2\pi}{{\nper}}h), h= 0, \ldots, {\nper}-1\}) $, by
\eqref{eq:boundM2}, we obtain
\begin{align*}
&E [ \vert\vert \vC_{n+1}\vert\vert ^2 \vert  \mathcal{F}_n] 
 = 
\vC_{n}^* (( 1-r_n) I + r_n D_{{\nper}}^*) 
(( 1-r_n) I + r_n D_{{\nper}}) \vC_n 
\\
& \qquad  \qquad + r_n^2 
(\boldsymbol{1}^\top - \boldsymbol{Z}_{n}^\top W) diag((\begin{smallmatrix}
\vv^\top 
\\
{P^{(2)}} 
\end{smallmatrix}
)^* (\begin{smallmatrix}
\vv^\top 
\\
{P^{(2)}} 
\end{smallmatrix}
) )
W^\top \boldsymbol{Z}_{n}
\\
& 
= 
\vC_{n}^* 
\Big( I -2r_n (1 - r_n)  (I - C_{{\nper}}) \Big) \vC_n+
r_n^2
(\boldsymbol{1}^\top - \boldsymbol{Z}_{n}^\top W) diag((\begin{smallmatrix}
\vv^\top 
\\
{P^{(2)}} 
\end{smallmatrix}
)^* (\begin{smallmatrix}
\vv^\top 
\\
{P^{(2)}} 
\end{smallmatrix}
) )
W^\top \boldsymbol{Z}_{n}
\\
& 
\leq 
\vC_{n}^* 
\Big( I -2r_n (1 - r_n)  (I - C_{{\nper}}) \Big) \vC_n+
\Big( \max_{h=1,\ldots,N} \{((\begin{smallmatrix}
\vv^\top 
\\
{P^{(2)}} 
\end{smallmatrix}
)^* (\begin{smallmatrix}
\vv^\top 
\\
{P^{(2)}} 
\end{smallmatrix}
) )_{hh}\} \Big) 
r_n^2 
V_n.
\end{align*}
The diagonal matrix $(I - C_{{\nper}}) $ has real non-negative entries
between $0$ and $2$ and we have $r_n (1 - r_n) \leq \frac{1}{4}$, thus
\[
0 \leq
\vC_{n}^* 
\Big( I -2r_n (1 - r_n)  (I - C_{{\nper}}) \Big) \vC_n
\leq \vert\vert \vC_{n}\vert\vert ^2\,,
\]
which, together with Lemma~\ref{lemma-tec}, shows that $\vert\vert \vC_{n}\vert\vert ^2 $ 
is a non-negative almost
supermartingales that converges almost surely (see \cite{rob}).
\end{proof}

The second important result to describe the periodic behavior of the
system is Theorem~\ref{th-limit-class-0-1}, which is focused on the
limit set of the processes within the cyclic classes.  Before
presenting the proof of Theorem~\ref{th-limit-class-0-1}, we need to
show some technical results (proven in the supplementary material).

\begin{lemma}
\label{lemma:VnTo0}
If $V_n=(\boldsymbol{1} - W^\top\boldsymbol{Z}_{n})^{\top} W^\top
\boldsymbol{Z}_{n}\stackrel{a.s.}\longrightarrow 0$, then, for any
$l\in\{1,\ldots,N\}$, the limit set of the sequences $(Z_{n,l})_n$ is
$\{0,1\}$, that is $Z_{n,l}(1-Z_{n,l})\stackrel{a.s.}\longrightarrow
0$.
\end{lemma}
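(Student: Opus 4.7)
The plan is to obtain a deterministic lower bound of the form $V_n \geq c\sum_{l=1}^N Z_{n,l}(1-Z_{n,l})$ with a constant $c>0$, since this immediately converts the hypothesis $V_n\to 0$ a.s.\ into the desired conclusion: the right-hand side is a sum of non-negative terms, so each summand must itself tend to $0$ a.s.

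The key observation is that the map $f(x)=x(1-x)$ is concave on $[0,1]$, and for every fixed $l$ the normalization assumption $W^\top\vone=\vone$ says exactly that the weights $(w_{l_1,l})_{l_1\in V}$ form a probability distribution on $V$. Applying Jensen's inequality pointwise I would write, for each $l$,
\[
[W^\top\vZ_n]_l\bigl(1-[W^\top\vZ_n]_l\bigr)
\;=\;f\!\Big(\textstyle\sum_{l_1} w_{l_1,l}\,Z_{n,l_1}\Big)
\;\geq\;\sum_{l_1} w_{l_1,l}\,Z_{n,l_1}(1-Z_{n,l_1}),
\]
and then summing over $l$ and swapping the order of summation gives
\[
V_n\;\geq\;\sum_{l_1=1}^N Z_{n,l_1}(1-Z_{n,l_1})\,s_{l_1},\qquad s_{l_1}:=\sum_{l=1}^N w_{l_1,l}.
\]

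To close the argument I would invoke irreducibility of $W$: a vertex $l_1$ with $s_{l_1}=0$ would correspond to a zero row of $W$, which would force $l_1$ to have no outgoing edge in $G=(V,E)$ and contradict strong connectivity. Hence $c:=\min_{l_1} s_{l_1}>0$, yielding $V_n\geq c\sum_l Z_{n,l}(1-Z_{n,l})$, and the conclusion follows at once.

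I do not foresee any real obstacle. The only point worth double-checking is the strict positivity of $c$, but this reduces to a standard consequence of irreducibility. Everything else is a single application of Jensen's inequality followed by a Fubini-type exchange of sums.
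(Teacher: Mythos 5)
Your proof is correct, and it takes a genuinely different (and arguably cleaner) route than the paper's. The paper's argument in the supplementary material is an $\epsilon$--$\delta$ contrapositive: using $t(1-t)\geq \tfrac12\min\{t,1-t\}$ it first deduces from $V_n$ small that $\min\{[W^\top\vZ_n]_l,\,1-[W^\top\vZ_n]_l\}$ is small for every $l$, and then, with $w_{\min}=\min\{w_{l_1,l_2}:w_{l_1,l_2}\neq 0\}>0$ and the column-normalization $W^\top\vone=\vone$, it transfers this to $\min\{Z_{n,l},\,1-Z_{n,l}\}$ being small for every $l$ (choosing $\delta=w_{\min}\epsilon/2$). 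You instead exploit concavity of $f(x)=x(1-x)$ and the fact that each column of $W$ is a probability vector, so Jensen gives the one-line deterministic comparison $V_n\geq \sum_{l_1} s_{l_1}Z_{n,l_1}(1-Z_{n,l_1})$ with $s_{l_1}=\sum_l w_{l_1,l}$; positivity of $c=\min_{l_1}s_{l_1}$ is exactly the statement that no row of $W$ vanishes, which follows from strong connectivity (and is the same structural fact the paper uses implicitly, since its contrapositive step needs each $l_1$ to have at least one $l_2$ with $w_{l_1,l_2}>0$). What your route buys is an explicit linear inequality $V_n\geq c\,Z_{n,l}(1-Z_{n,l})$ for every $l$, from which the conclusion is immediate and which is quantitatively at least as sharp as the paper's bound (note $c\geq w_{\min}$); what the paper's route buys is control phrased directly in terms of $\min\{Z_{n,l},1-Z_{n,l}\}$, which is the form in which the lemma is reused later. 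Both arguments use only column-stochasticity plus nonvanishing rows, so there is no gap in your proposal.
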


\begin{lemma}\label{lem:subseqRn}
  Assume $\sum_nr_n = +\infty$ and $\sum_n r_n(1-r_n)< +\infty$. For
  any $n$, set $\delta_n = \mathbbm{1}_{\{r_n > {1}/{2}\}}$ and $\tau_n =
  \inf\{\tau\geq1 \colon \sum_{i=1}^\tau \delta_i \geq n\}$ ($\tau_0 = 0$).
  Then $\sum_n \delta_n = +\infty$, or, equivalently, $\tau_n <
  +\infty$ for any $n$.  In addition, there exists a sequence
  $(\epsilon_n)_n$ such that $\sum_n \epsilon_n < +\infty$ and
  $\sup_{m \in (\tau_n,\tau_{n+1}]} \{\vert Z_{m,l_1} - Z_{\tau_{n+1},l_1}\vert \}
    < \epsilon_n $ for any $l_1\in V=\{1,\ldots,N\}$.
\end{lemma}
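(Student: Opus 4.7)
The plan is to establish the two assertions in turn, both being consequences of the elementary inequality $r \le 2r(1-r)$ valid for $r \in (0,1/2]$, which is precisely the bridge between the two summability hypotheses on $(r_n)_n$.

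For the divergence of $\sum_n \delta_n$, I would first observe that on the set $\{n\colon r_n \le 1/2\}$ one has $r_n \le 2r_n(1-r_n)$, whence
\[
\sum_{n\colon r_n \le 1/2} r_n \;\le\; 2 \sum_n r_n(1-r_n) \;<\; +\infty.
\]
Combined with $\sum_n r_n = +\infty$, this forces $\sum_{n\colon r_n > 1/2} r_n = +\infty$, and in particular the set $\{n\colon r_n > 1/2\} = \{n\colon \delta_n = 1\}$ to be infinite. Hence $\sum_n \delta_n = +\infty$ and every $\tau_n$ is finite.

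For the construction of $(\epsilon_n)_n$, the key point is that, by the very definition of $(\tau_n)_n$, every index $k$ strictly between two consecutive clock-times, i.e.~$\tau_n < k < \tau_{n+1}$, satisfies $r_k \le 1/2$. Since the dynamics~\eqref{eq:dynamics} gives $Z_{k+1,l_1} - Z_{k,l_1} = r_k(X_{k+1,l_1} - Z_{k,l_1})$ and $|X_{k+1,l_1} - Z_{k,l_1}| \le 1$, a telescoping argument yields, uniformly in $l_1 \in V$ and $m \in (\tau_n,\tau_{n+1}]$,
\[
|Z_{m,l_1} - Z_{\tau_{n+1},l_1}| \;\le\; \sum_{k=m}^{\tau_{n+1}-1} r_k \;\le\; \sum_{k=\tau_n+1}^{\tau_{n+1}-1} r_k \;\le\; 2 \sum_{k=\tau_n+1}^{\tau_{n+1}-1} r_k(1-r_k),
\]
with the usual empty-sum convention covering the degenerate case $\tau_{n+1} = \tau_n + 1$ (in which the supremum is simply $0$). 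I would then set
\[
\epsilon_n \;=\; 2\sum_{k=\tau_n+1}^{\tau_{n+1}-1} r_k(1-r_k) \;+\; 2^{-n},
\]
the additive correction $2^{-n}$ serving only to upgrade the $\le$ above to the strict inequality $<$ demanded in the statement (and to cover the degenerate case as well).

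The summability of $(\epsilon_n)_n$ is then immediate: each $k \in \mathbb{N}$ not of the form $\tau_j$ lies in exactly one open interval $(\tau_n,\tau_{n+1})$, so
\[
\sum_n \epsilon_n \;\le\; 2\sum_k r_k(1-r_k) \;+\; \sum_n 2^{-n} \;<\; +\infty
\]
by the hypothesis $\sum_n r_n(1-r_n)<+\infty$. I do not foresee a genuine obstacle: both parts reduce to the inequality $r \le 2r(1-r)$ on $(0,1/2]$, and the only real bookkeeping is verifying that the double sum over the intervals $(\tau_n,\tau_{n+1})$ does not double-count any index (it does not, since the $\tau_j$ are strictly increasing and are themselves excluded from each open interval) and that the bound is uniform in the vertex $l_1$, which is automatic from the vertex-free estimate $|Z_{k+1,l_1}-Z_{k,l_1}| \le r_k$.
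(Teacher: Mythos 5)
Your proof is correct and follows essentially the same route as the paper's: the first part rests on the observation that $r_n\le 1/2$ gives $r_n\le 2r_n(1-r_n)$ (the paper phrases it as a contradiction, you argue directly), and the second part uses the same telescoping bound $\vert Z_{m,l_1}-Z_{\tau_{n+1},l_1}\vert\le\sum_{k}r_k$ over indices strictly between clock-times, where $r_k\le 1/2$, together with the same disjointness of the intervals $(\tau_n,\tau_{n+1})$ for summability. Your additive $2^{-n}$ is a harmless refinement that secures the strict inequality, which the paper's own proof only establishes as $\le$.
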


\begin{proof}[Proof of Theorem~\ref{th-limit-class-0-1}]
Let $(\tau_n)_n$ as in Lemma~\ref{lem:subseqRn}.  Now, since $
r_{\tau_n} > \tfrac{1}{2}$, Lemma~\ref{lemma-tec} implies that
$V_{r_{\tau_n}} \stackrel{a.s.}\longrightarrow 0$, and hence Lemma
\ref{lemma:VnTo0} entails that, for any $l_1\in V=\{1,\ldots,N\}$,
\[
\min \big\{ Z_{\tau_n,l_1} , 1- Z_{\tau_n,l_1} \big\}
\stackrel{a.s.}{\longrightarrow} 0.
\]
By Lemma~\ref{lem:subseqRn}, we have $\sup_{m \in
  (\tau_n,\tau_{n+1}]} \{\vert Z_{m,l_1} - Z_{\tau_{n+1},l_1}\vert \} < \epsilon_n
  \to 0$ and so $ \min \{ Z_{n,l_1} , 1- Z_{n,l_1} \} \to 0 $ almost surely.
  This implies that $ V_{n} \to 0 $ a.s., which means that the
  limit set of the sequences $([W^\top \vZ_{n}]_{l_2})_n$ is
  $\{0,1\}$, for any $l_2\in V$.  Lemma~\ref{lemma:VnTo0}
  implies that also the limit set of the sequences $(Z_{n,l_1})_n$
  is $\{0,1\}$, for any $l\in V$. Then, by
  Theorem~\ref{teo:thirdpart}, for any $l_1$ in the cyclic class $h$,
  we have that the limit set of
\[
Z_{n,h}^{(c)} = {Z}_{n,l_1} - {Z}_{n,l_1}^{(3)}
\] 
is $\{0,1\}$, and hence, for any $0<\epsilon < 1$,
\[
\vert\vert \vZ_{n}^{(c)}\vert\vert ^2 - \#\{ h \colon Z_{n,h}^{(c)} >1 - \epsilon \}
\stackrel{a.s.}\longrightarrow 0.
\]
Then, the second part of the proof is a consequence of
Theorem~\ref{th-norm}, once we have defined $N_{\infty} = a.s.-\lim_n
\vert\vert \vZ_{n}^{(c)}\vert\vert ^2 $.  In particular, the last statement simply
follows by the fact that $P(N_\infty=0)+P(N_\infty=\nper)=1$ would
imply the almost sure synchronization of the entire system, which is
in contradiction with Corollary~\ref{th-sincro} when $\nper\geq 2$,
as here we are assuming $\sum_n r_n(1-r_n)<+\infty$.
\end{proof}

Finally, we present the basic ideas and the structure of the 
proof concerning the asymptotic
periodic dynamics of the system. The details are reported in the supplementary material.

\begin{proof}[Proof of Theorem~\ref{teo:PeriodicCycle}]  
\noindent \emph{Notation and structure of the proof}\\ Let
$(\tau_n)_n$ as in Lemma~\ref{lem:subseqRn} and define, for any $n
\geq 0$, $\sigma_n = \tau_n+1$ and the set
$$
A_{n} 
  = \bigcap_{l=1,\ldots,N}\Big\{ X_{\sigma_n,l} =
  \mathbbm{1}_{[\tfrac{1}{2},1]} \big([W^\top \vZ_{\tau_n}]_l \big) \Big\}.
$$ Moreover, for each cyclic class $h\in\{0,\dots, \nper-1\}$ and
  binary index $g\in\{0,1\}$, let us introduce the following sets:
\begin{align*}
&
B_{X,n}^h(g) = \bigcap_{l\in \text{ cyclic class }h} \{X_{\sigma_n,l} = g\},
\\
&
B_{Z,n}^h(g) =
\bigcap_{l\in \text{ cyclic class }h}
\left\{\mathbbm{1}_{[\tfrac{1}{2},1]} \big(Z_{\tau_n, l} \big) = g\}\right\},
\\
&
B_{WZ,n}^h(g) =
\bigcap_{l\in \text{ cyclic class }h}
\left\{\mathbbm{1}_{[\tfrac{1}{2},1]} \big([W^\top \vZ_{\tau_n}]_l \big) = g\}\right\}.
\\
\end{align*}
Notice that the thesis $P(X_{\sigma_n, l_1}=X_{\sigma_n,l_2}\ \text{ev.})=1$ for all $l_1\sim_c l_2$ and  
$P(X_{\sigma_{n},h-1}^{(c)} = X_{\sigma_{n-1},h}^{(c)},\ \text{ev.})=1$ can be equivalently written
as
\begin{equation}\label{eq:thesis_B_X}
  P\Big(\bigcap_{h=0}^{\nper-1} \{B_{X,n-1}^h(0)\cap B_{X,n}^{h-1}(0)\}\cup
  \{B_{X,n-1}^h(1)\cap B_{X,n}^{h-1}(1)\},\ \text{ev.}\Big)=1.
\end{equation}
In addition, fix an integer $n_0$ and, for any $n\geq n_0$, let us define
\begin{align*}
&B_{n} = \bigcap_{h=0}^{\nper-1}B_{X,n}^h(0)\cup
B_{X,n}^h(1)=\bigcap_{(l_1,l_2)\colon l_1\sim_c l_2} \{X_{\sigma_n,l_1} = X_{\sigma_n,l_2}\}.
\\
& 
A_{n_0,n} 
= 
\bigcap_{m= n_0}^{n-1} A_{m},\qquad\text{and}\qquad
B_{n_0,n} 
= 
\bigcap_{m= n_0}^{n-1} B_{m}.
\end{align*}
The proof is structured as follows:
\begin{itemize}
\item[(1)] we prove that $\sup_{m_1,m_2 \in
  \{\sigma_{n-1},\ldots,\sigma_{n}-1\}} \{\vert\vert  \vZ_{m_1} - \vZ_{m_2}
  \vert\vert \}\stackrel{a.s.}\longrightarrow 0$, i.e. the dynamics between the
  clock times $(\sigma_n)_n$ is stationary;
\item[(2)] we show that $(\vX_{\sigma_n} -
  \vZ_{\sigma_n})\stackrel{a.s.}\longrightarrow 0$, and hence proving
  \eqref{eq:thesis_B_X} is enough to have also that
  $(Z_{\sigma_{n},h-1}^{(c)} - Z_{\sigma_{n-1},h}^{(c)})
  \stackrel{a.s.}\longrightarrow 0$;
\item[(3)] we prove that $A_n \cap B_{X,n-1}^h(g) \subseteq
  B_{X,n}^{h-1}(g)$, which implies that~\eqref{eq:thesis_B_X} follows
  once we show that $P(A_n \cap B_{n},\ \text{ev.})\to 1$; the result
  $A_n \cap B_{X,n-1}^h(g) \subseteq B_{X,n}^{h-1}(g)$ is 
  established by proving the following:
\begin{itemize}
\item[(3a)] $B_{X,n-1}^h(g) \subseteq B_{Z,n}^h(g)$;
\item[(3b)] $B_{Z,n}^h(g) \subseteq B_{WZ,n}^{h-1}(g)$;
\item[(3c)] $(B_{WZ,n}^{h-1}(g) \cap A_n) \subseteq B_{X,n}^{h-1}(g)$;
\end{itemize}
\item[(4)] we prove that $P(A_n \cap B_{X,n},\ \text{ev.})=1$: since
  this fact is equivalent to showing that the probability 
\begin{align*}
P \Big(\bigcap_{m = n_0}^{\infty} ( A_{m} \cap B_{m} )\Big) 
& = 
P( A_{n_0} \cap B_{n_0} ) 
\prod_{m=n_0+1}^{\infty} 
P( A_{m} \cap B_{m} \vert  A_{n_0,m}  \cap B_{n_0,m} ),
\end{align*}
tends to one as $n_0\to+\infty$, we show 
\begin{itemize}
\item[(4a)] $P( A_{n_0} \cap B_{n_0} )\to 1$;
\item[(4b)] $\prod_{m=n_0+1}^{\infty} 
P( A_{m} \cap B_{m} \vert  A_{n_0,m}  \cap B_{n_0,m}) \to 1$;
\end{itemize}
\item[(5)] finally, we show that, since $\sum_n(1-r_n)<+\infty$ implies $r_n\to
  1$, in this case it holds that $\sigma_{n+1}=\sigma_n+1$.
\end{itemize}
\end{proof}

\noindent{\bf Declaration}
\\
All the authors contributed equally to the present work. 
\\

\noindent{\bf Funding Sources} 
\\
Irene Crimaldi is partially supported by the Italian “Programma di Attività Integrata” (PAI),
project “TOol for Fighting FakEs” (TOFFE) funded by IMT School for Advanced Studies Lucca.

\appendix

\makeatletter
\let\oldtitle\@title
\let\@title\@empty
\makeatother

\markright{SUPPLEMENTARY MATERIAL}
\newpage
\setcounter{section}{0}
\setcounter{page}{1}
\setcounter{figure}{0}
\setcounter{equation}{0}
\renewcommand{\thesection}{S\arabic{section}}
\renewcommand{\thepage}{s\arabic{page}}
\renewcommand{\thetable}{S\arabic{table}}
\renewcommand{\thefigure}{S\arabic{figure}}
\renewcommand{\theequation}{S:\arabic{figure}}




\section*{\large\textbf{\uppercase{Supplementary material for Networks of reinforced stochastic processes: a complete  description of the first-order asymptotics}}}
\thispagestyle{empty}

\section{Details for the proofs}\label{app-proofs}
In this section, we will adopt the following notation in accordance with Section~\ref{sec:proofs}:
\begin{itemize}
\item[(1)] $n\in\mathbb{N}$ is the time-step index;
\item[(2)] $l,l_1,l_2,\ldots \in \{1,\ldots,N\}$ are spatial indexes
  related to the vertices of the network;
\item[(3)] the indices $j,j_1,j_2,\ldots \in \{0,\ldots,\nper-1\}$ are
  related to the $\nper$ complex roots of the unit, and they are
  always defined modules $\nper$;
\item[(4)] the indices $h,h_1,h_2,\ldots \in \{0,\ldots,\nper-1\}$ are
  related to the cyclic classes, and they are always defined modules $\nper$.
\end{itemize}
Moreover, given a complex matrix $A$, we denote by $Sp(A)$ the set of its
eigenvalues and by $A^*$ the conjugate transpose of $A$, and we recall
that $(A+B)^*=A^*+B^*$, $(zA)^*=\bar{z}A^*$, $(AB)^*= B^*A^*$,
$(A^*)^*=A$.\\

\subsection{Spectral representation of the components of
  the decomposition of $(Z_n)_n$ (detailed proof of Theorem~\ref{teo:sp-project})}
\label{app-charSpectral}

In this section, we present the proof of Theorem~\ref{teo:sp-project},
which characterizes the decomposition \eqref{eq:decomposZ} in terms of
the non-orthogonal projections on the spectral eigenspaces related to
$W^\top$.  Let us consider the Jordan representation of $W^{\top}$ in
terms of its left and right generalized eigenvectors given by the
Perron-Frobenious~Theorem for irreducible non-negative matrices:
\[
P W^\top = J_{W} P,
\qquad
W^\top P^{-1} = P^{-1}  J_{W} ,
\]
with
\[
J_{W} = 
\begin{pmatrix}
D_{{\nper}} & 0
\\
0 & J_{N- {\nper}}
\end{pmatrix},
\]
where $D_{{\nper}}$ is the diagonal matrix with the complex
${\nper}$-roots of the unity $\lambda_{1,j}= \exp(\tfrac{2\pi
  i}{n_\text{per}} j)$, $j = 0, \ldots, {\nper}-1$:
\[
\resizebox{\textwidth}{!}{$
D_{{\nper}} =
\begin{pmatrix}
e^{i\frac{2\pi}{{\nper}}0} = 1 & 0 & \ldots & 0 & 0  
\\
0 & e^{i\frac{2\pi}{{\nper}}1} = \lambda_{1,1} & \ldots & 0 & 0  
\\
\vdots & \vdots & \vdots & \vdots & \vdots 
\\
0 & 0 & \ldots & e^{i\frac{2\pi}{{\nper}}(-2)} = \lambda_{1,{\nper}-2} & 0  
\\
0 & 0 & \ldots & 0 & e^{i\frac{2\pi}{{\nper}}(-1)} = \lambda_{1,{\nper}-1}   
\end{pmatrix}$}
\]
and $J_{N- {\nper}}$ contains the Jordan blocks associated to the
eigenvalues $\{\lambda\in Sp(W)\colon\vert \lambda\vert <1\}$.  We already know that we
may choose the first line of $P$ as $\vv^\top$ and the first column of
$P^{-1}$ as $\vone$.  In the next lemma, we give a characterization of
the left and right eigenvectors related to the complex roots of the
unity.  To be more explicit, let us define cia
\[
P = 
\begin{pmatrix}
\vv^\top
\\
\vv_1^\top
\\
\vdots
\\
\vv_{\nper-1}^\top
\\
\vp_{1}^\top
\\
\vdots
\\
\vp_{N-\nper}^\top
\end{pmatrix}
=
\begin{pmatrix}
\vv^\top
\\
{P^{(2)}}
\\
{P^{(3)}}
\end{pmatrix},
\]
where
\[
{P^{(2)}} = 
\begin{pmatrix}
\vv_1^\top
\\
\vdots
\\
\vv_{\nper-1}^\top
\end{pmatrix}
\qquad\text{and}\qquad
{P^{(3)}} = 
\begin{pmatrix}
\vp_{1}^\top
\\
\vdots
\\
\vp_{N-\nper}^\top
\end{pmatrix},
\]
and analogously
\[
P^{-1} =
 \begin{pmatrix}
\vone
&
\vq_1
&
\cdots
&
\vq_{\nper-1}
&
\vr_{1}
&
\cdots
&
\vr_{N-\nper}
\end{pmatrix}
=
\begin{pmatrix}
\vone
&
{P^{\dagger}}^{(2)}
&
{P^{\dagger}}^{(3)}
\end{pmatrix},
\]
with
\[
{P^{\dagger}}^{(2)} =
 \begin{pmatrix}
\vq_1
&
\cdots
&
\vq_{\nper-1}
\end{pmatrix}
\qquad\text{and}\qquad
{P^{\dagger}}^{(3)} =
 \begin{pmatrix}
\vr_{1}
&
\cdots
&
\vr_{N-\nper}
\end{pmatrix}.
\]
Then, we are going to find a characterization of the left eigenvectors
$\{\vv_1^\top, \ldots, \vv_{\nper-1}^\top\}$ and of the right
eigenvectors $\{\vq_1, \ldots, \vq_{\nper-1}\}$.  Remarkable, they are
obtained by combining the entries of $\vv^\top$ and $\vone$,
respectively, with the complex $\nper$-roots of the unity, as the
following lemma points out.

\begin{lemma}[Characterization of the eigenvectors of $D_{\nper}$]
  \label{lem:eigvD_per}
  For any $j_1 = 1, \ldots, \nper-1$ and $l_2 = 1, \ldots, N$,
\[
\begin{aligned}
&[P^{(2)}]_{j_1,l_2} =
[{\vv_{j_1}^\top}]_{l_2} = v_{l_2} \lambda_{1,j_1}^{-h} =
e^{-i\frac{2\pi}{{\nper}}j_1h}  v_{l_2} ,\\
&[{P^{\dagger}}^{(2)}]_{l_2,j_1} =
[\vq_{j_1}]_{l_2} = \lambda_{1,j_1}^{h} = e^{i\frac{2\pi}{{\nper}}j_1h}  ,
\end{aligned}
\]
where $h$ is the cyclic class which the element $l_2$ belongs
to.
\end{lemma}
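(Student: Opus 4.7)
The plan is a direct verification that the proposed explicit vectors are right and left eigenvectors of $W^\top$ associated to the eigenvalues $\lambda_{1,j_1} = e^{2\pi i j_1/\nper}$. The only structural inputs are the cyclic-class decomposition of $W^\top$ described in the introduction, the row-stochasticity $W^\top \vone = \vone$, and the defining property $\vv^\top W^\top = \vv^\top$ of the leading left eigenvector.

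First I would record the combinatorial consequence of periodicity, namely that $[W^\top]_{l_1,l_2}\neq 0$ only if $l_1$ lies in cyclic class $h$ and $l_2$ lies in cyclic class $h+1$ (indices modulo $\nper$). Combined with $W^\top \vone = \vone$, this immediately yields $\sum_{l_2\, \in\, \text{class }h+1} [W^\top]_{l_1,l_2} = 1$ for every $l_1$ in class $h$; combined with $\vv^\top W^\top = \vv^\top$, it yields $\sum_{l_1\, \in\, \text{class }h'-1} v_{l_1}[W^\top]_{l_1,l_2} = v_{l_2}$ for every $l_2$ in class $h'$. Both identities are what drives the computation.

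The right-eigenvector identity then follows by fixing $l_1$ in class $h$: the cyclic pattern collapses the sum defining $[W^\top \vq_{j_1}]_{l_1}$ to indices $l_2$ in class $h+1$, on which $[\vq_{j_1}]_{l_2} = \lambda_{1,j_1}^{h+1}$ is constant. This scalar factors out and the remaining row sum equals $1$, giving $\lambda_{1,j_1}^{h+1} = \lambda_{1,j_1}\,[\vq_{j_1}]_{l_1}$. The left-eigenvector identity is symmetric: for $l_2$ in class $h'$, only $l_1$ in class $h'-1$ contribute to $[\vv_{j_1}^\top W^\top]_{l_2}$, the factor $\lambda_{1,j_1}^{-(h'-1)}$ pulls out, and the remaining $\vv$-weighted column sum equals $v_{l_2}$, producing $v_{l_2}\lambda_{1,j_1}^{-(h'-1)} = \lambda_{1,j_1}\,[\vv_{j_1}]_{l_2}$. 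The crucial reason the algebra matches is that the sign of the exponent in $\vv_{j_1}$ is opposite to that in $\vq_{j_1}$, so that multiplying by $\lambda_{1,j_1}$ corresponds exactly to shifting the class index by one in the direction dictated by the cyclic structure.

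To close, I would invoke the Perron-Frobenius statement that each $\lambda_{1,j_1}$, $j_1=1,\ldots,\nper-1$, is a simple eigenvalue of $W^\top$, so these vectors span the associated one-dimensional left and right eigenspaces and hence occupy the correct rows of $P^{(2)}$ and columns of ${P^{\dagger}}^{(2)}$ up to scalar. The normalizations picked in the lemma are consistent with the diagonal biorthogonality $\vv_{j_1}^\top \vq_{j_1} = \sum_l v_l = 1$; the remaining biorthogonality relations $\vv_{j_1}^\top \vone = \vv^\top \vq_{j_1} = 0$ and $\vv_{j_1}^\top \vq_{j_2} = 0$ for $j_1 \neq j_2$ reduce, via a geometric sum over roots of unity, to the identity $\sum_{l\, \in\, \text{class }h} v_l = 1/\nper$, which is exactly the content of part (iv) of the sketch of Theorem~\ref{teo:sp-project} and is not needed for the present statement. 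No analytic difficulty arises: the sole point requiring care is the modulo-$\nper$ index bookkeeping, which is routine once the conventions of the introduction are fixed.
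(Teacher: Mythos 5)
Your proposal is correct and takes essentially the same route as the paper's proof: a direct verification, using the cyclic-class structure of $W^\top$ together with $W^\top\vone=\vone$ and $\vv^\top W^\top=\vv^\top$, that the displayed vectors are right and left eigenvectors for $\lambda_{1,j_1}$, combined with the normalization $\vv_{j_1}^\top\vq_{j_1}=\sum_l v_l=1$. The only cosmetic difference is that the paper also checks the cross-relations $\vv_{j_1}^\top\vq_{j_2}=0$ for $j_1\neq j_2$ by the one-line distinct-eigenvalue argument, whereas you observe they are automatic from the simplicity of these eigenvalues; both are fine.
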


Notice that, if we had defined ${\vv_{j_1}^\top}$ and $\vq_{j_1}$ also
for $j_1 = 0$, we would have coherently obtained
${\vv_{0}^\top}=\vv^\top$ and $\vq_{0} = \vone$.

\begin{proof}
First, we prove that ${\vv_{j}^\top}$ and $\vq_{j}$ are eigenvectors
for $W^\top$ related to $\lambda_{1,j}$, then we prove the
orthonormalization condition.\\ \indent

First, let us recall that, by definition of cyclic classes, if $l_2$ belongs to
the $h$-th cyclic class, then
\begin{equation}\label{eq:algPeriodic}
[\vx^\top W^\top]_{l_2} =
\sum_{l \in \text{cyclic class }h-1} 
x_l [W^\top]_{l, l_2}
, \qquad
[W^\top \vy]_{l_2} =
\sum_{l \in \text{cyclic class }h+1} 
[W^\top]_{l_2,l} y_l\,.
\end{equation}
If we combine the definition of $\vv^\top_{j_1}$ and $\vq_{j_1}$ with
\eqref{eq:algPeriodic}, for $l_2$ belonging to the $h$-th cyclic
class, we obtain
\begin{align*}
[\vv^\top_{j_1} W^\top]_{l_2}  & =
\sum_{l \in \text{cyclic class }h-1} 
[\vv^\top_{j_1}]_{l} [W^\top]_{l, l_2}
 =
\sum_{l \in \text{cyclic class }h-1} 
v_{l} \lambda_{1,j_1}^{-(h-1)}  [W^\top]_{l, l_2}
\\ 
& =
\lambda_{1,j_1}\lambda_{1,j_1}^{-h} 
\Big( \sum_{l =1}^N 
v_{l} [W^\top]_{l, l_2} \Big) 
= \lambda_{1,j_1}\lambda_{1,j_1}^{-h} 
[\vv^\top W^\top]_{l_2}
 = 
\lambda_{1,j_1} [ \lambda_{1,j_1}^{-h} v_{l_2} ]
\\&= 
\lambda_{1,j_1} [\vv^\top_{j_1}]_{l_2}.
\end{align*}
The same holds for the right eigenvectors:
\begin{align*}
[W^\top \vq_{j_1} ]_{l_2}  & =
\sum_{l \in \text{cyclic class }h+1} 
[W^\top]_{l_2l} [\vq_{j_1} ]_{l} 
\\ 
& =
\sum_{l \in \text{cyclic class }h+1} 
[W^\top]_{l_2,l} 1 \lambda_{1,j_1}^{h+1} 
 =
\lambda_{1,j_1}\lambda_{1,j_1}^{h} 
[W^\top\vone]_{l_2}
 = 
\lambda_{1,j_1} [\vq_{j_1}]_{l_2}.
\end{align*}
For what concerns the orthonormalization condition, we note that
\[
\lambda_{1,j_1} {\vv_{j_1}^\top}\vq_{j_2} =
{\vv_{j_1}^\top}W^\top \vq_{j_2}  = \lambda_{1,j_2} {\vv_{j_1}^\top}\vq_{j_2},
\]
that means that $ {\vv_{j_1}^\top}\vq_{j_2} = 0$ if $j_1 \neq j_2$, as
in this case $\lambda_{1,j_1}\neq\lambda_{1,j_2}$.  Furthermore,
denoting by $h_l$ the cyclic class that element $l$ belongs to, we
have
\[
{\vv_{j_1}^\top}\vq_{j_1} =
\sum_{l=1}^N v_l \lambda_{1,j_1}^{-h_l}
\lambda_{1,j_1}^{+h_l}
=
\sum_{l=1}^N v_l = 1,
\] 
which completes the proof.
\end{proof}

The following result states that $\vv^\top$ divides the total
mass equally into each cyclic class.

\begin{lemma}\label{massOfBlocks}
The quantity $\sum_{l \in \text{cyclic class }h} v_{l}$ does not depend
on the choice of the cyclic class $h$. Then, for any $l \in \{1, \ldots , N\}$, 
\[
\sum_{l_2\sim_c l} v_{l_2} = \frac{1}{\nper}.
\]
\end{lemma}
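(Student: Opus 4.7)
The plan is to exploit the Perron--Frobenius identity $\vv^\top W^\top = \vv^\top$ together with the cyclic-class structure recalled at the beginning of the section, namely the fact that $[W^\top]_{l_1,l_2}=0$ unless $l_1$ lies in the cyclic class preceding the one of $l_2$. This means that, for $l_2$ in the $h$-th cyclic class, the identity $\vv^\top W^\top = \vv^\top$ reads
\[
v_{l_2} = \sum_{l\in\text{cyclic class }h-1} v_l\,[W^\top]_{l,l_2},
\]
since all other terms in the sum over $l$ vanish by the cyclic-class structure (exactly as in equation~\eqref{eq:algPeriodic}).

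The main step is then to sum this identity over all $l_2$ lying in the $h$-th cyclic class and to exchange the order of summation, obtaining
\[
\sum_{l_2\in\text{cyclic class }h} v_{l_2}
= \sum_{l\in\text{cyclic class }h-1} v_l \sum_{l_2\in\text{cyclic class }h}[W^\top]_{l,l_2}.
\]
The key observation is that the inner sum $\sum_{l_2\in\text{cyclic class }h}[W^\top]_{l,l_2}$ actually equals the full row sum $\sum_{l_2=1}^N [W^\top]_{l,l_2}$, because for $l$ in cyclic class $h-1$ the only (possibly) nonzero entries of the row $[W^\top]_{l,\cdot}$ correspond to indices $l_2$ in cyclic class $h$. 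By the normalization assumption $W^\top\vone=\vone$, this row sum equals $1$, and therefore
\[
\sum_{l_2\in\text{cyclic class }h} v_{l_2}
= \sum_{l\in\text{cyclic class }h-1} v_l.
\]
This shows that the function $h\mapsto \sum_{l\in\text{cyclic class }h} v_l$ is invariant under the shift $h\mapsto h-1$, hence constant in $h$.

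Finally, since there are exactly $\nper$ cyclic classes partitioning $V$ and $\vv^\top\vone=\sum_{l=1}^N v_l=1$ by the normalization of the leading left eigenvector, the common value of $\sum_{l\in\text{cyclic class }h} v_l$ must equal $1/\nper$, which yields the claim. I do not anticipate any real obstacle: the argument is essentially a row-stochasticity computation combined with the support pattern of $W^\top$ imposed by the cyclic-class decomposition, and no estimates or additional spectral machinery are needed.
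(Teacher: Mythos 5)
Your proof is correct and takes essentially the same route as the paper: both combine the left-eigenvector identity $\vv^\top W^\top=\vv^\top$, the normalization $W^\top\vone=\vone$, and the cyclic support pattern of $W^\top$ (equation~\eqref{eq:algPeriodic}) to show that the class sums are invariant under a shift of the class index, and then conclude from $\vv^\top\vone=1$ together with the fact that there are exactly $\nper$ classes. The only cosmetic difference is that you apply the two identities in the reverse order, obtaining the equality of the class-$h$ sum with the class-$(h-1)$ sum, whereas the paper relates class $h$ to class $h+1$.
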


\begin{proof}
By applying \eqref{eq:algPeriodic} multiple times, we obtain, for any
cyclic class $h$,
\begin{align*}
\sum_{l_1 \in \text{cyclic class }h} v_{l_1} & = 
\sum_{l_1 \in \text{cyclic class }h} v_{l_1} [\vone]_{l_1}
 = 
\sum_{l_1 \in \text{cyclic class }h} v_{l_1} [W^\top \vone]_{l_1}
\\
& = 
\sum_{l_1 \in \text{cyclic class }h} v_{l_1} \sum_{l_2 \in \text{cyclic class }h+1} 
[W^\top]_{l_1,l_2} [\vone]_{l_2}
\\
& = 
\sum_{l_2 \in \text{cyclic class }h+1} 
\Big( \sum_{l_1 \in \text{cyclic class }h} v_{l_1} 
[W^\top]_{l_1,l_2} \Big) [\vone]_{l_2}
\\
& = 
\sum_{l_2 \in \text{cyclic class }h+1} 
[\vv^\top W^\top]_{l_2} [\vone]_{l_2}
 = 
\sum_{l_2 \in \text{cyclic class }h+1} 
v_{l_2} [\vone]_{l_2}\\ &= 
\sum_{l_2 \in \text{cyclic class }h+1} 
v_{l_2} .
\end{align*}
The last part of the statement is a consequence of the normalizing
condition $\vv^\top \vone = 1$, because 
the number of cyclic classes is $\nper$.
\end{proof}

Finally, we are ready for presenting the proof of the main result of
this subsection, i.e. Theorem~\ref{teo:sp-project}.

\begin{proof}[Proof of Theorem~\ref{teo:sp-project}] We are going to prove
  the following equalities:
\begin{equation}\label{def:P1_P2_P3}
\begin{aligned}
&{\vZ}_{n}^{(1)} = \vone \vv^\top \vZ_n,
\qquad
{\vZ}_{n}^{(2)} = 
{P^{\dagger}}^{(2)} {P^{(2)}} \vZ_n = 
\sum_{j=1}^{\nper-1} \vq_{j}\vv_j^\top\vZ_n,
\\&\text{and}\qquad {\vZ}_{n}^{(3)} = 
{P^{\dagger}}^{(3)} {P^{(3)}} \vZ_n =
\sum_{j=1}^{N-\nper} \vr_{j} \vp_{j}^\top\vZ_n.
\end{aligned}
\end{equation}
Since ${\vZ}_{n}^{(1)} = \vone \vv^\top \vZ_n= \widetilde{Z}_n \vone$
by construction, for ${\vZ}_{n}^{(2)}$ we have to prove that
\[
(\vone \vv^\top 
+
{P^{\dagger}}^{(2)} {P^{(2)}} ) \vZ_n = 
{\vZ}_{n}^{(1)} + {\vZ}_{n}^{(2)}.
\]
Since by Lemma~\ref{massOfBlocks}
\[
[{\vZ}_{n}^{(1)} + {\vZ}_{n}^{(2)}]_l = 
\sum_{l_2 \sim_c l} \frac{v_{l_2}}{
\sum_{l_1 \sim_c l} v_{l_1}
} Z_{n,l_2} 
= 
\nper \sum_{l_2 \sim_c l} v_{l_2}
Z_{n,l_2},
\]
we have only to prove that  
\[
\big[ (\vone \vv^\top 
+
{P^{\dagger}}^{(2)} {P^{(2)}} ) \vZ_n \Big]_{l}
= 
\nper \sum_{l_2 \sim_c l} v_{l_2}
Z_{n,l_2} .
\]
Then, denoting by $h_1$ and $h_2$ the cyclic classes which $l_1$ and
$l_2$ belong to, respectively, we can use Lemma~\ref{lem:eigvD_per} to
obtain
\begin{align*}
v_{l_2} +
[{P^{\dagger}}^{(2)} {P^{(2)}}]_{l_1,l_2}
& = 
v_{l_2} +
\sum_{j = 1}^{\nper-1} \lambda_{1,j}^{h_1} 
v_{l_2} \lambda_{1,j}^{-h_2} 
 = 
v_{l_2} 
+
v_{l_2} 
\sum_{j = 1}^{\nper-1} \lambda_{1,j}^{h_1-h_2} 
\\ & = 
v_{l_2} 
+
v_{l_2} 
\sum_{j = 1}^{\nper-1} \lambda_{1,h_1-h_2}^j 
 = 
v_{l_2} 
\sum_{j = 0}^{\nper-1} \lambda_{1,h_1-h_2}^j.
\end{align*}
Now, $\lambda_{1,h_1-h_2}$
is a root of the unity, we have, by Lemma~\ref{lem:sumRoots},
\[
[\vone \vv^\top 
+
{P^{\dagger}}^{(2)} {P^{(2)}} ]_{l_1l_2} = 
\begin{cases}
v_{l_2} \nper & \text{if }l_1\sim_c l_2,
\\
0 & \text{otherwise},
\end{cases}
\]
and hence $\big[ (\vone \vv^\top 
+
{P^{\dagger}}^{(2)} {P^{(2)}} ) \vZ_n \Big]_{l}
= 
\nper \sum_{l_2 \sim_c l} v_{l_2}
Z_{n,l_2} $, as required.
The part of ${\vZ}_{n}^{(3)}$ is now obvious, since $I = P^{-1} P$, and hence
\begin{align*}
 {\vZ}_{n}^{(3)} & =
\vZ_n - ({\vZ}_{n}^{(1)}+{\vZ}_{n}^{(2)}) =
(I - (\vone \vv^\top 
+
{P^{\dagger}}^{(2)} {P^{(2)}} ) )
\vZ_n 
\\
& 
=
\bigg(\begin{pmatrix}
\vone
&
{P^{\dagger}}^{(2)}
&
{P^{\dagger}}^{(3)}
\end{pmatrix}
\begin{pmatrix}
\vv^\top
\\
{P^{(2)}}
\\
{P^{(3)}}
\end{pmatrix}
-
(\vone \vv^\top 
+
{P^{\dagger}}^{(2)} {P^{(2)}} ) \bigg)
\vZ_n 
 =
{P^{\dagger}}^{(3)} {P^{(3)}} \vZ_n. 
\end{align*}
As a consequence, we obtain that the three components $\vZ_n^{(i)}$,
with $i=1,2,3$ are linearly independent.  Finally, from Definition
\ref{def-sincro}, we have that the entire system almost surely
synchronizes in the limit if and only if there exists a process $(\vZ_n^*)_n$ of the
form $\vZ_n^*=Z_n^*\vone$ such that $(\vZ_n-\vZ_n^*)$ converges almost
surely to $\vzero$. Since ${\vZ}_{n}^{(1)} \in Span\{\vone\}$ and
the linear independence of the three
components, this fact is possible if and only if both the second and
the third component converge almost surely to zero.
\end{proof}


\subsection{Details for the proofs of the results stated in
  Section~\ref{sec:decomp-conv-sincro}} 

\begin{proof}[Proof of Theorem~\ref{teo:thirdpart}]

\noindent \emph{Modification of the Jordan space:}\\ We show how to
replace the Jordan block $J_{W,\lambda}$ with a new block
$J_{W,\beta,\lambda}$ such that $\vert\vert J_{W,\beta,\lambda}\vert\vert _{2,2} < 1$. To
this end, for any real $\beta\neq 0$ let us define $D_\beta = diag (1,
\beta, \beta^2, \ldots , \beta^{n_j-1}) $.  We have
\begin{multline*}
D_\beta  J_{W,\lambda} =
\begin{pmatrix}
1 & 0 & 0 & \ldots & 0 \\
0 & \beta & 0 & \ldots & 0 \\
\vdots & \vdots &\vdots & & \vdots \\
0 & 0 & 0 & \ldots & 0 \\
0 & 0 & 0 & \ldots & \beta^{n_j-1}
\end{pmatrix}
\begin{pmatrix}
\lambda & 1 & 0 & \ldots & 0 \\
0 & \lambda & 1 & \ldots & 0 \\
\vdots & \vdots &\vdots & & \vdots \\
0 & 0 & 0 & \ldots & 1 \\
0 & 0 & 0 & \ldots & \lambda
\end{pmatrix} 
\\
= 
\begin{pmatrix}
\lambda & \tfrac{1}{\beta} & 0 & \ldots & 0 \\
0 & \lambda & \tfrac{1}{\beta} & \ldots & 0 \\
\vdots & \vdots &\vdots & & \vdots \\
0 & 0 & 0 & \ldots & \tfrac{1}{\beta} \\
0 & 0 & 0 & \ldots & \lambda
\end{pmatrix}
\begin{pmatrix}
1 & 0 & 0 & \ldots & 0 \\
0 & \beta & 0 & \ldots & 0 \\
\vdots & \vdots &\vdots & & \vdots \\
0 & 0 & 0 & \ldots & 0 \\
0 & 0 & 0 & \ldots & \beta^{n_j-1}
\end{pmatrix}
=
J_{W,\beta,\lambda} 
D_\beta .
\end{multline*}
Then, using the above relations 
$ P_{\lambda} W^\top =J_{W,\lambda} P_{\lambda}$ and
$D_\beta  J_{W,\lambda}=J_{W,\beta,\lambda} D_\beta$, 
if we define $P_{\beta,\lambda} = D_\beta P_{\lambda} $, we have
\begin{equation}\label{eq:JWbeta}
P_{\beta,\lambda}W^\top = D_\beta P_{\lambda} W^\top = 
D_\beta  J_{W,\lambda} P_{\lambda}
= 
J_{W,\beta,\lambda} D_\beta P_{\lambda}
= J_{W,\beta,\lambda} P_{\beta,\lambda}.
\end{equation}
Roughly speaking, $P_{\beta,\lambda}W^\top = J_{W,\beta,\lambda}
P_{\beta,\lambda}$ means that $P_{\beta,\lambda}$ is a
$\tfrac{1}{\beta}$-Jordan base for the $\tfrac{1}{\beta}$-Jordan block
$J_{W,\beta,\lambda}$. Obviously, $\vert \tfrac{1}{\beta}\vert $ may be
arbitrary small if $\vert \beta\vert $ is big enough, so that we expect $\vert\vert 
J_{W,\beta,\lambda} \vert\vert _{2,2} $ so close to $\vert \lambda\vert $ to be strictly
smaller than $1$. To prove this fact, set $\bar{J} =
J_{W,\beta,\lambda} J_{W,\beta,\lambda}^*$, that can be easily
computed:
\[
\bar{J} = J_{W,\beta,\lambda} J_{W,\beta,\lambda}^*
= 
\begin{pmatrix}
  \vert \lambda\vert  + \tfrac{1}{\vert \beta\vert ^2} & \tfrac{\bar{\lambda}_j}{\beta} & 0 &
  \ldots & 0 & 0 & 0\\
  \tfrac{{\lambda}}{\bar{\beta}}  & \vert \lambda\vert  + \tfrac{1}{\vert \beta\vert ^2} &
  \tfrac{\bar{\lambda}_j}{\beta} & \ldots & 0 & 0 & 0 \\
\vdots & \vdots &\vdots & & \vdots & \vdots & \vdots \\
0 & 0 & 0 & \ldots & \tfrac{{\lambda}}{\bar{\beta}}  &
\vert \lambda\vert  + \tfrac{1}{\vert \beta\vert ^2} & \tfrac{\bar{\lambda}_j}{\beta} \\
0 & 0 & 0 & \ldots & 0 & \tfrac{{\lambda}}{\bar{\beta}}  & \vert \lambda\vert  
\end{pmatrix}.
\]
Then, we get 
\[
\vert\vert  J_{W,\beta,\lambda} \vert\vert _{2,2} \leq \max_{j= 1, \ldots, N} \Big\{\sum_{i=1}^N \vert [\bar{J}]_{ij}\vert \Big\}
= (\vert \lambda\vert  + \tfrac{1}{\vert \beta\vert })^2.
\]
  Now, let $1/ \breve{\beta}= \sqrt{\frac{1 +\vert \lambda\vert }{2}} -
  \vert \lambda\vert >0$ so that
\begin{equation*}
  \vert\vert J_{W,\breve{\beta},\lambda}\vert\vert _{2,2}
  \leq \frac{1 + \vert \lambda\vert }{2} < 1.
\end{equation*}
This concludes the step of the modification of the Jordan space.\\

\emph{Second part of the proof ($\Rightarrow$):}\\

\noindent  Here we present the technical details related to 
Step (i) and Step (ii) of the second part of the proof of 
Theorem~\ref{teo:thirdpart}.\\

\emph{Step (i)}\\
Take $n_0\geq N$ such
that the set $A_0=\{0< Z_{n_0,l_2}\leq Z_{n_0,l_1}< 1\}$ has strictly
positive probability (this is possible by the irreducibility of $W$
and the hypothesis $P(T_0)<1$). Now, take the set
$A=A_0\cap\{X_{n,l_1}=1,X_{n,l_2}=0, \forall n_0< n \leq n_1\}$ (where
$n_1$ will be determined more ahead), and notice that, on $A$, we have
$$
Z_{n_1,l_2} = Z_{n_0,l_2}\prod_{k=n_0}^{n_1-1}(1-r_k),
\qquad\text{and}\qquad
1-Z_{n_1,l_1} = (1-Z_{n_0,l_1})\prod_{k=n_0}^{n_1-1}(1-r_k).
$$ Then, denoting $\Delta Z_{n_1} = Z_{n_1,l_1} - Z_{n_1,l_2}$ and
$\Delta Z_{n_0} = Z_{n_0,l_1} - Z_{n_0,l_2}$, on $A$, we have that
$$
\Delta Z_{n_1} = 1 - (1-\Delta Z_{n_0})\prod_{k=n_0}^{n_1-1}(1-r_k)\geq
1 - \prod_{k=n_0}^{n_1-1}(1-r_k),
$$ which is strictly positive as $0<r_n <1$ by definition.  Now, set
$0<\epsilon< 1 - \prod_{k=n_0}^{\infty}(1-r_k)$ and fix $n_1$
sufficiently large so that $1 - \prod_{k=n_0}^{n_1-1}(1-r_k) >
\epsilon$ and $\sum_{n \geq n_1} r_n < \epsilon/3$. Since it can be
easily proved that $P(A)>0$ whenever $r_n<1$, what we have shown is
that $P(\Delta Z_{n_1}>\epsilon)>0$.  However, since for both $l=l_1,
l_2$ it holds that $\max_{n\geq n_1}\{\vert Z_{n_1,l}-Z_{n,l}\vert \}<\epsilon/3$,
we have that $P(\max_{n\geq n_1}\{\vert \Delta Z_{n}\vert \}>\epsilon)>0$.
Therefore, we cannot have that $a.s.-\lim_{n\to+\infty}
Z_{n,l_2}-Z_{n,l_1}=0$ on $A$.

\emph{Step (ii)}\\ Notice that, since $N>\nper$, there exist two
different states that belong to the same cyclic class, call them $l_1$
and $l_2$.  Hence, applying the result of Step (i) to the pair
($l_1,l_2$) within the same cyclic class, we have that
${\vZ}_{n}^{(3)}\not\to\vzero $ with a strictly positive probability,
since ${\vZ}_{n}^{(1)}$ and ${\vZ}_{n}^{(2)}$ are constant on the same
cyclic class.
\end{proof}

\begin{proof}[Proof of Corollary~\ref{th-sincro-periodic}]
Note that, if $l_1\sim_c l_2$, then by definition we have
$({Z}_{n,l_1}^{(1)}+{Z}_{n,l_1}^{(2)})=({Z}_{n,l_2}^{(1)}+{Z}_{n,l_2}^{(2)})$.
Hence, if $l_1\sim_c l_2$, we have ${Z}_{n,l_1}-{Z}_{n,l_2} =
{Z}_{n,l_1}^{(3)}-{Z}_{n,l_2}^{(3)} $, which tends to zero a.s. for
all these pairs $(l_1, l_2)$ with $l_1\sim_c l_2$ if and only if all
${Z}_{n,l_1}^{(3)}$, ${Z}_{n,l_2}^{(3)}$ tends to zero a.s., that by
Theorem~\ref{teo:thirdpart} occurs if and only if $\sum_n r_n=
+\infty$.  This corresponds to the almost sure asymptotic
synchronization within each cyclic class.  Recall that it is not
possible that
$a.s.-\lim_n{Z}_{n,l_1}^{(3)}=a.s.-\lim_n{Z}_{n,l_2}^{(3)}\neq 0$ for
all pairs $(l_1, l_2)$ with $l_1\sim_c l_2$ as this would mean that
$a.s.-\lim_n{\vZ}_{n}^{(3)}$ belongs to $Span\{\vone,
\vq_1,\dots,\vq_{\nper-1}\})$, which is impossible by
Theorem~\ref{teo:sp-project}.
\end{proof}

\begin{proof}[Proof of Theorem~\ref{teo:secondpart}] 
Here we present some calculations related to the 
terms $a_{j_1,n}$ and $C_{j_1,n}$ derived 
in the Proof of Theorem~\ref{teo:secondpart}.\\

\indent \emph{(i) Term $a_{j_1,n}$}\\
We have 
\begin{equation*}
\begin{aligned}
a_{j_1,n} &&=&\
1+r_n^2(1-\Re(\lambda_{1,j_1}))^2 + r_n^2\Im(\lambda_{1,j_1})^2
- 2r_n(1 - \Re(\lambda_{1,j_1}))\\
&&=&\  1 - 2r_n(1-r_n)(1 - \Re(\lambda_{1,j_1}))\\ 
&&=&\ 1-s_{j_1,n},
\end{aligned}
\end{equation*}
where, since $\vert \lambda_{1,j_1}\vert =1$, we can write
$s_{j_1,n}=2r_n(1-r_n)(1 - \cos(\tfrac{2 \pi}{{\nper}}j_1))$.  Notice
that, since we are considering ${\nper}\geq 2$ and $1\leq j_1\leq
\nper-1$, we have $-1\leq \cos(\tfrac{2 \pi}{{\nper}}j_1)<1$, which,
combined with $0<r_n(1-r_n)\leq 1/4$, implies $0<s_{j_1,n}\leq 1$.
\\

\indent \emph{(ii) Term $C_{j_1,n}$}\\ 
By \eqref{eq:boundM2} and
Lemma~\ref{lem:eigvD_per}, we have that $\max_{l=1,\ldots,N}
\{[\bar{\vv}_{j_1} \vv_{j_1}^\top ]_{l,l}\} =
\max_{l=1,\ldots,N}\{v_l^2\}\leq 1$, and hence we can bound $C_{j_1,n}$
as follows:
$$
0\ \leq\ C_{j_1,n}\leq r_n^2 V_n.
$$
\end{proof}

\begin{proof}[Proof of Corollary~\ref{th-sincro}]
The spectral representation given in Theorem~\ref{teo:sp-project}
states that any linear combination of ${\vZ}_{n}^{(2)} $ and
${\vZ}_{n}^{(3)}$ involves the base vectors given in $P^{-1}$ except
the vector $\vone$, which is related to ${\vZ}_{n}^{(1)} $.  Then, the
complete almost sure asymptotic synchronization coincides with the
asymptotic vanishing of the processes ${\vZ}_{n}^{(2)} $ and
${\vZ}_{n}^{(3)}$, and so it is a direct consequence of
Theorem~\ref{teo:thirdpart} and Theorem~\ref{teo:secondpart}.
\end{proof}


\subsection{Details for the proofs of the results stated in Section~\ref{sec:non-sincro}}

\begin{proof}[Proof of Proposition~\ref{th-r_n-finite-convergent}]
Fix $l$, the existence of the almost sure limit of $(Z_{n,l})_n$  
follows by the fact that, under the assumption $\sum_n r_n<+\infty$,
  the process $(Z_{n,l})_n$ is a non-negative almost
    super-martingale (see \cite{rob}). Indeed, we have
$$
    E[Z_{n+1,l}\vert \mathcal{F}_n] =Z_{n,l}-r_nZ_{n,l}+
    r_nE[X_{n+1,l}\vert \mathcal{F}_n]\leq Z_{n,l}+\xi_{n,l}
$$ where $\sum_n \xi_{n,l}=\sum_n r_n E[X_{n+1,l}\vert \mathcal{F}_n]\leq \sum_n
    r_n <+\infty$. Then, since $\widetilde{Z}_n$, and
so $\vZ^{(1)}_n$ (see~\eqref{def:Z_1}), converges almost surely, we
have that also ${\vZ}_{n}^{(2)}$ converges almost surely because by
definition ${\vZ}_{n}^{(2)}$ is a function of ${\vZ}_{n}$
(see~\eqref{def:Z_2}). Finally, the almost sure convergence of
${\vZ}_{n}^{(3)}$ follows by~\eqref{eq:decomposZ} as
${\vZ}_{n}^{(3)}={\vZ}_{n}-{\vZ}_{n}^{(1)}-{\vZ}_{n}^{(2)}$. 
The last statement of the result is proved in Step (i) of the second part
$(\Leftarrow)$ of the Proof of Theorem \ref{teo:thirdpart}.
\end{proof}

We also observe that, in the scenario $\sum_n r_n<+\infty$, for each $l$, the limit random 
variable $Z_{\infty,l}$ of the $(Z_{n,l})_n$ cannot touch the barriers 
when the process starts in $(0, 1)$. This fact follows from Lemma~\ref{lemma-tec-appendix-1} 
applied to $y_n=Z_{n,l}(\omega)$ and to $y_n=1-Z_{n,l}(\omega)$, with $\omega\in \{0<Z_{0,l}<1\}$. 
Indeed, we have $Z_{n+1,l}\geq (1-r_n)Z_{n,l}$ and $(1-Z_{n+1,l})\geq (1-r_n)(1-Z_{n,l})$.
\\

Now, we give the proof of Lemma~\ref{lem:modulusSpectral}, used in the proof of 
Theorem~\ref{th-norm}.

\begin{proof}[Proof of Lemma~\ref{lem:modulusSpectral}]
First, we show that $O_\nper$ is an orthogonal matrix.
To this end, notice that for any $j_1, j_2\in\{0,\dots,\nper-1\}$, we can write
\[
(O_\nper^* O_\nper)_{j_1j_2} =
\frac{1}{\nper}
\sum_{j = 0}^{\nper-1} 
\lambda_{1,j}^{-j_1} \lambda_{1,j_2}^{j}
=
\frac{1}{\nper}
\sum_{j = 0}^{\nper-1} 
\lambda_{1,-j_1}^{j} \lambda_{1,j_2}^{j}
=
\frac{1}{\nper}
\sum_{j = 0}^{\nper-1} 
\lambda_{1,j_2-j_1}^{j}.
\]
Then, by Lemma~\ref{lem:sumRoots} we have
\[
\frac{1}{\nper}
\sum_{j = 0}^{\nper-1} 
\lambda_{1,j_2-j_1}^{j} 
= 
\begin{cases}
1 & \text{if }j_1=j_2,
\\
0 & \text{otherwise},
\end{cases}
\]
which concludes the proof that $O_\nper$ is orthogonal.

Now, we focus on proving that $\vZ_n^{(c)} =\nper^{\frac{3}{2}}
O_\nper \vC_n$.  For this purpose, first notice that by
\eqref{eq:defZnB} and Lemma~\ref{massOfBlocks}, for any
$h_1\in\{0,\dots, \nper-1\}$ we have
$$
Z_{n,h_1}^{(c)} = \sum_{l_2 \in \text{cyclic class }h_1} \frac{v_{l_2}}{
\sum_{l \in \text{cyclic class }h_1} v_l} Z_{n,l_2}=
\nper \sum_{l_2 \in \text{cyclic class }h_1} v_{l_2} Z_{n,l_2}  ,
$$
and hence it is enough to prove that 
$$
\sqrt{\nper}[ O_{\nper}\vC_n]_{(h_1+1)} = 
\sum_{l_2 \in \text{cyclic class }h_1} v_{l_2} Z_{n,l_2}.
$$
Recalling that $\vC_n = (\begin{smallmatrix}
\vv^\top 
\\
{P^{(2)}} 
\end{smallmatrix}
) \vZ_n$, this is the same as proving that, for each $l_2\in\{1,\dots,N\}$,
$$
\sqrt{\nper}\Big[ O_{\nper}\begin{pmatrix}
\vv^\top 
\\
{P^{(2)}} 
\end{pmatrix}\Big]_{(h_1+1) l_2} = 
\begin{cases}
v_{l_2} & \text{if $h_2=h_1$},
\\
0 & \text{otherwise},
\end{cases}
$$
where with $h_2$ we have denoted the cyclic class which $l_2$ belongs to.
To this end, by
Lemma~\ref{lem:eigvD_per} and Lemma~\ref{lem:sumRoots} we obtain
\begin{align*}
\sqrt{\nper} \Big[ O_\nper 
\begin{pmatrix}
\vv^\top 
\\
{P^{(2)}} 
\end{pmatrix}\Big]_{(h_1+1)l_2}
& =
v_{l_2} +
\sum_{j_2=1}^{\nper-1}
[\sqrt{\nper} O_\nper]_{(h_1+1)(j_2+1)}  ({P^{(2)}})_{j_2l_2}
\\
& =
v_{l_2} +
\sum_{j_2=1}^{\nper-1}
\lambda_{1,h_1}^{j_2}
v_{l_2} \lambda_{1,j_2}^{-h_2}
\\ & =
v_{l_2} + v_{l_2} 
\sum_{j_2=1}^{\nper-1}
\lambda_{1,h_1}^{j_2}
\lambda_{1,-h_2}^{j_2}
\\
& =
v_{l_2} 
\sum_{j_2=0}^{\nper-1}
\lambda_{1,h_1-h_2}^{j_2}
 =
\begin{cases}
v_{l_2} & \text{if $h_2=h_1$},
\\
0 & \text{otherwise}.
\end{cases}
\end{align*}
This concludes the proof that $ \vZ_n^{(c)} =\nper^{\frac{3}{2}} O_\nper \vC_n$.

Finally, since
$O_\nper$ is an orthonormal matrix, we immediately have that $\vert\vert \vZ_n^{(c)}\vert\vert ^2
=\nper^{3} \vert\vert O_\nper \vC_n\vert\vert ^2 =\nper^{3} \vert\vert \vC_n\vert\vert ^2$, where $\vert\vert \vC_n\vert\vert ^2=
{\vC}_{n+1}^*\vC_{n+1}$.
\end{proof}

The following proofs concern 
Lemma \ref{lemma:VnTo0} and Lemma \ref{lem:subseqRn}, 
used in the proof of Theorem~\ref{teo:PeriodicCycle}. 

\begin{proof}[Proof of Lemma \ref{lemma:VnTo0}]
Since $V_n=(\boldsymbol{1} - W^\top\boldsymbol{Z}_{n})^{\top} W^\top
\boldsymbol{Z}_{n}=(\boldsymbol{1}^\top - \boldsymbol{Z}_{n}^\top
W)^{\top} W^\top \boldsymbol{Z}_{n}$, it is enough to prove that,
given a vector $\vx$, with $x_l\in[0,1]$, for any $\epsilon>0$, there
exists $\delta>0$ such that
$$
(\vone^\top-\vx^\top W)W^\top \vx < \delta
\Rightarrow
(\vone^\top-\vx^\top)\vx <  \epsilon.$$ 
In order to prove this fact, first notice that
$$(\vone^\top-\vx^\top W)W^\top \vx =
\sum_{l_2=1}^N(1-[\vx^\top W]_{l_2})[W^\top \vx]_{l_2} \geq
\max_{l_2=1,\dots,N}\big\{(1-[\vx^\top W]_{l_2})[W^\top \vx]_{l_2}\big\},$$
and since $t(1-t) \geq \frac{1}{2} \min\{t,1-t\}$ for $t\in [0,1]$,
$$\max_{l_2=1,\dots,N}\big\{(1-[\vx^\top W]_{l_2})[W^\top \vx]_{l_2}\big\} \geq
\frac{1}{2}\max_{l_2=1,\dots,N}\big\{\min\big\{(1-[\vx^\top W]_{l_2}),[W^\top \vx]_{l_2}
\big\}\big\},
$$
which implies that, for any $\delta>0$,
$$
(\vone^\top-\vx^\top W)W^\top \vx < \delta\qquad 
\Longrightarrow\qquad
\min \Big\{ 1-[\vx^\top W]_{l_2}, [W^\top \vx]_{l_2} \Big\} < 2\delta\; \forall l_2.
$$
Finally,
setting $w_{\min} =
\min\{[W]_{l_1,l_2}=w_{l_1,l_2}\colon w_{l_1,l_2}\neq 0\}>0$ and
$2\delta = w_{\min}\epsilon $, we need to prove that,
$$
\min \Big( 1-[\vx^\top W]_{l_2}, [W^\top \vx]_{l_2} \Big) < 2\delta\;
\forall l_2\qquad
\Longrightarrow\qquad
\min \{ 1-x_{l_1},  x_{l_1}\} < \epsilon\;\forall l_1,
$$
which follows by showing its contrapositive as follows:
\begin{equation*}
\begin{aligned}
x_{l_1} \geq \epsilon 
& \qquad 
\Longrightarrow
\qquad 
    [W^\top \vx]_{l_2}=\sum_{l_1}w_{l_1,l_2}x_{l_1} \geq w_{\min}\epsilon=2\delta,
\\
1-x_{l_1} \geq \epsilon 
& \qquad 
\Longrightarrow
\qquad 
1-[W^\top \vx]_{l_2}=\sum_{l_1}w_{l_1,l_2}(1-x_{l_1}) \geq w_{\min}\epsilon=
2\delta,
\end{aligned}
\end{equation*}
for all $l_2$ such that $ w_{l_1,l_2}>0$.
This concludes the proof.
\end{proof}

\begin{proof}[Proof of Lemma \ref{lem:subseqRn}]
First, we will prove that $\sum_n \delta_n =+\infty$.
To this end, assume by contradiction that $\sum_n \delta_n = M < +\infty$,
which implies
$r_n\leq 1/2$ for any $n > \tau_M$. Hence, we can write
\[
\sum_n r_n(1-r_n) = 
\sum_{n \leq \tau_M} r_n(1-r_n) +
\sum_{n > \tau_M} r_n(1-r_n) 
\geq 
\sum_{n \leq \tau_M} r_n(1-r_n) +
\frac{1}{2}\sum_{n > \tau_M} r_n = +\infty,
\]
which contradicts the assumption $\sum_n r_n(1-r_n)<+\infty$.
This concludes the first part of the proof. 

We can now define the sequence $(\epsilon_n)_n$ as follows:
for $n \geq 0$, set 
\[
\epsilon_n = 
\begin{cases}
0 & \text{if }\tau_{n+1} = \tau_{n} + 1,
\\
\sum_{p=\tau_{n} + 1}^{\tau_{n+1}-1}
r_p
& \text{if }\tau_{n+1} > \tau_{n} + 1,
\end{cases}
\]
which has the property that 
$$\sum_n \epsilon_n 
= \sum_n \mathbbm{1}_{\{\tau_{n+1} > \tau_{n} + 1\}}\sum_{p=\tau_{n} + 1}^{\tau_{n+1}-1}r_p 
= \sum_n r_n\mathbbm{1}_{\{\delta_n=0\}}.$$
First we will show that $\sum_n \epsilon_n < +\infty$.
Note that, when $\delta_n=0$, $r_n \leq1/2$ and hence $2(1-r_n)\geq 1$,
which can be used to get the following
\[
\sum_n \epsilon_n = \sum_n\mathbbm{1}_{\{\delta_n=0\}} r_n \leq
\sum_n \mathbbm{1}_{\{\delta_n=0\}}r_n 2(1-r_n) \leq 2 \sum_n r_n (1-r_n) < +\infty.
\]
Finally, we will show that $\sup_{m \in (\tau_n,\tau_{n+1}]}
  \{\vert Z_{m,l_1} - Z_{\tau_{n+1},l_1}\vert \} < \epsilon_n $ for any
  $l_1\in\{1,\ldots,N\}$.  To this end, by \eqref{interacting-2-intro}
  and the triangular inequality, we get for $m \in \{\tau_n
  +1,\ldots,\tau_{n+1} \}$ that
\[
\begin{aligned}
\vert Z_{m,l_1} - Z_{\tau_{n+1},l_1}\vert  
&&\leq& 
\sum_{p = m+1}^{\tau_{n+1}} 
\vert Z_{p,l_1} - Z_{p-1,l_1} \vert 
\leq 
\sum_{p = m+1}^{\tau_{n+1}} 
r_{p-1} \vert X_{p,l_1} - Z_{p-1,l_1} \vert \\
&&\leq& 
\sum_{p = \tau_n +2}^{\tau_{n+1}} 
r_{p-1} 
=
\epsilon_n. \qedhere
\end{aligned} 
\]
\end{proof}

Finally, we present the details of the proof of Theorem~\ref{teo:PeriodicCycle}, 
whose notation and structure was described in the main article. 

\begin{proof}[Proof of Theorem~\ref{teo:PeriodicCycle}]

\noindent \emph{Step (1)}\\ Here we prove the stationary dynamics of
$(\vZ_n)_n$ between the clock times $(\sigma_n)_n$, that is
\[
\sup_{m_1,m_2 \in \{\sigma_{n-1},\ldots,\sigma_{n}-1\}}
 \{\vert\vert  \vZ_{m_1} - \vZ_{m_2}
  \vert\vert \}\stackrel{a.s.}\longrightarrow 0.
\]
This simply follows by Lemma~\ref{lem:subseqRn}, as we have  
\begin{multline*}
\sup_{m_1,m_2 \in \{\sigma_{n-1},\ldots,\sigma_{n}-1\}} \{\vert\vert  \vZ_{m_1} - \vZ_{m_2} \vert\vert  \}
\\
\leq 
\sup_{m_1,m_2 \in \{\tau_{n-1}+1,\ldots,\tau_{n}\}} 
\Big\{
\vert\vert  \vZ_{m_1} - \vZ_{\tau_n} \vert\vert  
+
\vert\vert  \vZ_{m_2} - \vZ_{\tau_n} \vert\vert  
\Big\}
\leq 2 \sqrt{N} \epsilon_n,
\end{multline*}
that goes to $0$ almost surely.\\

\noindent \emph{Step (2)}\\
Now, $r_{\tau_n} \to 1 $ by
Lemma~\ref{lem:subseqRn} and this, together with 
the definition of $\vZ_n$ in \eqref{eq:dynamics}, immediately implies 
\begin{equation}\label{eq:XsimZ}
  \vX_{\sigma_n} - \vZ_{\sigma_n} =
  (1-r_{\tau_n}) (\vX_{\tau_n+1} - \vZ_{\tau_n})
  \stackrel{a.s.}\longrightarrow \vzero.
\end{equation}

\noindent \emph{Step (3a)}\\
Let $ (\epsilon_n)_n$ as in Lemma~\ref{lem:subseqRn}, so that
\begin{equation}\label{eq:Ztau2sigma}
\vert Z_{\tau_{n+1}}-Z_{\sigma_n}\vert  \leq \epsilon_n.
\end{equation}
Since $r_{\tau_n} \to 1$, take $n_*$
so that $m \geq n_*$ implies $r_{\tau_{m-1}} -\epsilon_{m-1} > 1/2$.
Then, when $m \geq n_*$,
by \eqref{eq:dynamics} and \eqref{eq:Ztau2sigma}, we have
\begin{equation}\label{eq:ZsincroX1}
\begin{aligned}
&X_{\sigma_{m-1},l_1} = 1 
&&\Longleftrightarrow
&&Z_{\sigma_{m-1},l_1} \geq r_{\tau_{m-1}} 
&&\Longleftrightarrow
&&Z_{\tau_{m},l_1} 
\geq 
r_{\tau_{m-1}} -\epsilon_{m-1},
\\
&X_{\sigma_{m-1},l_1} = 0 
&&\Longleftrightarrow
&&Z_{\sigma_{m-1},1} \leq 1- r_{\tau_{m-1}} 
&&\Longleftrightarrow
&&Z_{\tau_{m},l_1} 
\leq 
1 - r_{\tau_{m-1}} +\epsilon_{m-1} ,
\end{aligned}
\end{equation}
and hence, for each cyclic class $h$,
\[
\forall m \geq n_*,\qquad 
B_{X,m-1}^h(0) \subseteq B_{Z,m}^h(0)\qquad\text{and}\qquad
B_{X,m-1}^h(1) \subseteq B_{Z,m}^h(1).
\]

\noindent \emph{Step (3b)}\\
Note that, by \eqref{eq:algPeriodic} and \eqref{eq:ZsincroX1}, if $l$
belongs to the $(h-1)$-th cyclic class, then, on $B_{m-1}$ and $m\geq
n_*$, we have that $[W^\top \vZ_{\tau_m}]_{l}$ is equal to
\begin{equation}\label{eq:WZtau}
\sum_{l_1 \in \text{cyclic class }h} 
[W^\top]_{l,l_1} Z_{\tau_{m},l_1}
\quad
\begin{cases}
\geq 
r_{\tau_{m-1}} -\epsilon_{m-1}  & \text{if }Z_{\sigma_{m-1},l_1} \geq r_{\tau_{m-1}} ,
\\
\leq 
1 - r_{\tau_{m-1}} +\epsilon_{m-1} 
& \text{if }Z_{\sigma_{m-1},1} \leq 1- r_{\tau_{m-1}} ,
\end{cases}
\end{equation}
and hence, for each cyclic class $h$,
\[
\forall m \geq n_*,\qquad 
B_{Z,m}^h(0) \subseteq B_{WZ,m}^{h-1}(0)\qquad\text{and}\qquad
B_{Z,m}^h(1) \subseteq B_{WZ,m}^{h-1}(1).
\]

\noindent \emph{Step (3c)}\\
By the definition of $A_m$, it is immediate to see that, on $A_m$, we have
$B_{WZ,m}^{h-1}(0)\equiv B_{X,m}^{h-1}(0)$ and $B_{WZ,m}^{h-1}(1)\equiv B_{X,m}^{h-1}(1)$
for any cyclic class $h$.\\

\noindent \emph{Step (4a)}\\
The definition of $V_{\tau_{n_0}}$, together with 
the relation $2t(1-t) \geq \min\{t,1-t\}$ for $t\in [0,1]$,
implies that, for any $l= 1,\ldots,N$, we have 
\[
\min \big\{
[W^\top \vZ_{\tau_{n_0}}]_l , ( 1- [W^\top \vZ_{\tau_{n_0}}]_l ) 
\big\}
\leq 
2[W^\top \vZ_{\tau_{n_0}}]_l ( 1- [W^\top \vZ_{\tau_{n_0}}]_l ) 
\leq 2V_{\tau_{n_0}},
\]
and hence 
\begin{equation}\label{eq:maxVn}
\max \big\{
[W^\top \vZ_{n_0}]_l , ( 1- [W^\top \vZ_{n_0}]_l ) 
\big\} \geq ( 1 - 2V_{n_0}).
\end{equation}
The definition of $A_{n_0}$ reads
\[
P( A_{n_0}) = \prod_{l=1}^N 
\begin{cases}
[W^\top \vZ_{\tau_{n_0}}]_l & \text{if }[W^\top \vZ_{\tau_{n_0}}]_l \geq \tfrac{1}{2},
\\
1 -[W^\top \vZ_{\tau_{n_0}}]_l & \text{if }[W^\top \vZ_{\tau_{n_0}}]_l < \tfrac{1}{2},
\end{cases}
\]
and hence by \eqref{eq:maxVn}, $P( A_{n_0}) \geq (1-2V_{n_0})^N$
that goes to $1$ by Theorem~\ref{th-limit-class-0-1}.

Since $ P( A_{n_0} \cap B_{n_0}) = 
P( B_{n_0} \vert  A_{n_0})P( A_{n_0}) $, for what concerns the remaining term
$P( B_{n_0} \vert  A_{n_0})$,
we will prove the sufficient condition $P(  B_{n_0,\infty} ) \to 1$.
Recall the definition of $\liminf$ for a sequence of sets, that
reads $\{\liminf_{n\rightarrow \infty} B_n\} = \cup_{n_0} \cap_{n \geq n_0} B_n$, so that the thesis becomes
\[
\lim_{n_0 \to +\infty}P(  B_{n_0,\infty} )
= P( \cup_{m=1}^\infty B_{m,\infty}) =
P( \liminf_{n\rightarrow \infty} B_{n} ) = 1.
\]
Now, using \eqref{eq:XsimZ} in Step 2
and Theorem~\ref{teo:thirdpart}, we have
\[
\vX_{\sigma_n} - \vZ^{(C)}_{\sigma_n}
= \Big( \vZ_{\sigma_n} -
 (\vZ^{(1)}_{\sigma_n} + \vZ^{(2)}_{\sigma_n} ) \Big) + (\vX_{\sigma_n} - \vZ_{\sigma_n})
\stackrel{a.s.}\longrightarrow \vzero .
\]
Now, $ (\vZ^{(C)}_{\sigma_n} )_n $ is constant on each cyclic class,
then the same holds asymptotically for $(\vX_{\sigma_n})_n$, that
assumes values in $\{0,1\}$.  In other words
\[
P(\liminf_n B_{n} )= P(\{B_{n} , \text{eventually}\}) = 1.
\]

\noindent \emph{Step (4b)}\\
Notice that the relation $A_m \cap B_{X,m-1}^h(g) \subseteq B_{X,m}^{h-1}(g)$
proved in Step 3 implies also that $A_{m} \cap B_{m-1}\subseteq B_{m} $.
As a consequence, for any $m > \max\{n_*,n_0\}$, we have
\[
P( A_{m} \cap B_{m} \vert  A_{n_0,m}  \cap B_{n_0,m}  )
= 
P( A_{m} \vert  A_{n_0,m}  \cap B_{n_0,m}  ).
\]
Since $ A_{n_0,m} \cap B_{n_0,m} \subseteq B_{m-1}$, the definition of
$A_{m}$, together with \eqref{eq:ZsincroX1} and \eqref{eq:WZtau},
implies
\[
P( A_{m} \vert  A_{n_0,m}  \cap B_{n_0,m} ) \geq ( r_{\tau_{m-1}} -\epsilon_{m-1}   )^N 
= \Big( 1 - \big( (1-r_{\tau_{m-1}} )+\epsilon_{m-1}  \big) \Big)^N .
\]
Summing up, for $n_0 \geq n_*$,
\begin{align*}
\prod_{m=n_0+1}^{\infty} 
P( A_{m} \cap B_{m} \vert  A_{n_0,m}  \cap B_{n_0,m}  ) 
& 
=
\prod_{m=n_0+1}^{\infty}
P( A_{m} \vert  A_{n_0,m}  \cap B_{n_0,m}  ) 
\\
& \geq 
\Big(
\prod_{m=n_0+1}^{\infty}
\big[ 
1 - \big( (1-r_{\tau_{m-1}} )+\epsilon_{m-1}  \big) 
\big]
\Big)^N\,,
\end{align*}
that goes to $1$ when $n_0\to+\infty$, because $\sum_m (
(1-r_{\tau_{m-1}} )+ \epsilon_{m-1}) < +\infty$ by
Lemma~\ref{lem:subseqRn}.  \\

\noindent \emph{Step (5)}\\
Under the further request that $\sum_n(1-r_n)<+\infty$, we have that
$r_n \to 1$.  The definition of $(\tau_n)_n$ given in
Lemma~\ref{lem:subseqRn} implies that, eventually, $\sigma_{n+1}-1=
\tau_{n+1}= \tau_{n}+1 = \sigma_{n}$. Then, the stationary part of the
dynamics disappears and the thesis follows.
\end{proof}


\section{Some auxiliary results}
\label{app-aux}

We here collect some technical results.

\begin{lemma}\label{lemma-tec-appendix-1}
  If  $y_{n+1}\geq (1-a_n)y_n $ with $0\leq a_n<1$ and
  $\sum_n a_n < +\infty$, then $\liminf_{n} y_n > 0$, provided $y_0>0$.
\end{lemma}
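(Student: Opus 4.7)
The plan is to iterate the inequality and show the resulting infinite product is strictly positive.

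First, I would iterate: since $1 - a_n \geq 0$, applying the recursion $n$ times gives
\[
y_n \;\geq\; y_0 \prod_{k=0}^{n-1}(1-a_k).
\]
Thus it suffices to prove that $\prod_{k=0}^{\infty}(1-a_k) > 0$, since then $\liminf_n y_n \geq y_0 \prod_k(1-a_k) > 0$ because $y_0 > 0$.

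Second, I would establish the positivity of the infinite product. Since $\sum_k a_k < +\infty$, we have $a_k \to 0$, so there exists $k_0$ with $a_k \leq 1/2$ for all $k \geq k_0$. For such $k$, the elementary bound $-\log(1-x) \leq 2x$ on $[0,1/2]$ (or equivalently $1-x \geq e^{-2x}$) gives
\[
\sum_{k \geq k_0} \log(1-a_k) \;\geq\; -2\sum_{k \geq k_0} a_k \;>\; -\infty,
\]
so $\prod_{k \geq k_0}(1-a_k) \geq \exp\!\bigl(-2\sum_{k\geq k_0} a_k\bigr) > 0$. Multiplying by the finite (and strictly positive, since each $a_k < 1$) factor $\prod_{k=0}^{k_0-1}(1-a_k)$ yields $\prod_{k=0}^{\infty}(1-a_k) > 0$, and the result follows.

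There is essentially no obstacle: the argument is a classical fact about infinite products and the only thing to be a bit careful about is that the hypothesis $a_k < 1$ (strict) ensures each finite partial product is strictly positive, while summability of $(a_k)$ controls the tail.
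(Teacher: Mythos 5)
Your proposal is correct and follows essentially the same route as the paper's proof: iterate to get $y_n \geq y_0\prod_{k=0}^{n-1}(1-a_k)$ and use $\sum_n a_n<+\infty$ to conclude that the infinite product has a strictly positive limit. The only difference is that you spell out the standard estimate ($1-x\geq e^{-2x}$ on $[0,1/2]$) which the paper invokes implicitly as a known fact about infinite products.
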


\begin{proof}
If $\sum_n a_n<+\infty$, then $\prod_{k=0}^n (1-a_k)$ converges to a
constant $L\in (0,1]$. Since we have $y_n\geq
  y_0\prod_{k=0}^{n-1}(1-a_k)$, we get $\liminf_n y_n\geq y_0L>0$.
\end{proof}

%

The following result is a slight generalization of 
\cite[Lemma A.1]{cri-dai-lou-min}.

\begin{lemma}\label{lemma-tecP}
If $a_n\geq 0$, $a_n \leq 1$ for $n$ large enough, $\sum_n a_n =
+\infty$, $\delta_n \geq 0$, $\sum_n \delta_n < +\infty$, $b>0$,
$y_n\geq 0$ and $y_{n+1} \leq (1-a_n)^b y_n + \delta_n$, then $\lim_n
y_n = 0$.
  \end{lemma}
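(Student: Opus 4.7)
The plan is to reduce the statement to the classical case $b=1$ by iterating the recursion and exploiting the fact that, if a sequence of numbers in $[0,1]$ has product going to $0$, then so does any positive power of that product. First, I would fix $n_0$ so large that $a_n\leq 1$ for every $n\geq n_0$, and set $p_n=(1-a_n)^b\in[0,1]$, so that the hypothesis reads $y_{n+1}\leq p_n y_n +\delta_n$ for $n\geq n_0$. A preliminary remark is that the $y_n$ stay bounded: using $p_n\leq 1$, one immediately gets $y_n\leq y_{n_0}+\sum_{j\geq n_0}\delta_j<+\infty$ for all $n\geq n_0$.

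Next, I would iterate the recursion. For $n\geq n_0$ and $k\geq 1$, a straightforward induction gives
\[
y_{n+k}\ \leq\ \Big(\prod_{i=n}^{n+k-1}(1-a_i)\Big)^{b} y_n\ +\ \sum_{j=n}^{n+k-1}\Big(\prod_{i=j+1}^{n+k-1}p_i\Big)\delta_j\ \leq\ \Big(\prod_{i=n}^{n+k-1}(1-a_i)\Big)^{b} y_n\ +\ \sum_{j=n}^{n+k-1}\delta_j,
\]
where in the last inequality I used that $p_i\in[0,1]$. Since $\sum_n a_n=+\infty$ and $a_n\in[0,1]$ eventually, the standard bound $\log\prod(1-a_i)\leq -\sum a_i=-\infty$ yields $\prod_{i=n}^{n+k-1}(1-a_i)\to 0$ as $k\to\infty$ for any fixed $n$; raising to the positive power $b$ preserves this, and because $y_n$ is bounded the first summand vanishes in the limit $k\to\infty$.

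Therefore, taking $\limsup_{k\to\infty}$ on both sides, I obtain $\limsup_m y_m\leq \sum_{j\geq n}\delta_j$ for every $n\geq n_0$. The assumption $\sum_n\delta_n<+\infty$ lets me send $n\to+\infty$ and conclude $\limsup_m y_m\leq 0$, which together with $y_m\geq 0$ gives $y_m\to 0$. The only delicate point is the passage from $b=1$ to a general $b>0$, but this is harmless: the exponent $b$ plays no role beyond ensuring that $p_n\in[0,1]$ and that a vanishing product in $[0,1]$ remains vanishing after raising to power $b$, so the argument is essentially a verbatim extension of the classical $b=1$ proof.
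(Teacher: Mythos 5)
Your proof is correct and follows essentially the same route as the paper: unroll the recursion so that $y$ is bounded by a product term $\bigl(\prod(1-a_i)\bigr)^b$ (killed by $\sum_n a_n=+\infty$) plus a weighted sum of the $\delta_j$ (killed by $\sum_n\delta_n<+\infty$). The only difference is bookkeeping: you restart the iteration at a late index $n$ and bound the weights by $1$, then let $n\to\infty$, whereas the paper keeps the weights, splits the convolution sum at an intermediate index $m$, and takes the double limit $n\to\infty$ then $m\to\infty$ — both are equivalent ways of handling the tail.
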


\begin{proof}
Let $l$ be such that $a_n \leq 1$ for all $n \geq
l$. It holds
$$ y_n\leq y_l \Big( \prod_{i=l}^{n-1}(1-a_i) \Big)^b + \sum_{i=l}^{n-1}
\delta_i \Big( \prod_{j=i+1}^{n-1} (1-a_j)\Big)^b .
$$ Using the fact that $\sum_n a_n = +\infty$, it follows that
$\prod_{i=l}^{n-1}(1-a_i) \longrightarrow 0$. Moreover, for every $m
\geq l$,
\begin{align*}
\sum_{i=l}^{n-1} \delta_i \Big( \prod_{j=i+1}^{n-1} (1-a_j)\Big)^b & = 
\sum_{i=l}^{m-1} \delta_i \Big( \prod_{j=i+1}^{n-1} (1-a_j) \Big)^b +
\sum_{i=m}^{n-1} \delta_i \Big( \prod_{j=i+1}^{n-1}(1-a_j) \Big)^b  
\\ 
& \leq \Big( \prod_{j=m}^{n-1}
(1-a_j) \Big)^b \sum_{i=l}^{m-1} \delta_i + \sum_{i=m}^{+\infty}
\delta_i. 
\end{align*}

Using the fact that $ \prod_{j=m}^{n-1} (1-a_j)
\longrightarrow 0$ and that $\sum_n \delta_n < +\infty$, letting first $n
\to +\infty$ and then $m \to +\infty$ in the above formula, the
conclusion follows.
\end{proof}


\section{Complex roots of the unit and norms of complex matrices}
\label{app-norm-matrices}

\begin{lemma}\label{lem:sumRoots}
For $\nper \geq 1$ and $z \in \mathbb{Z}$, let $\lambda_{z}= \exp(
\tfrac{2\pi i}{\nper} z)$ be a $\nper$-root of the unity.  Then, we have 
\[
\sum_{h=0}^{\nper-1} \lambda_{z}^h = 
\begin{cases}
\nper & \text{if }\mod(z,\nper) = 0;
\\
0 & \text{otherwise.}
\end{cases}
\]
\end{lemma}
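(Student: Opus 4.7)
The plan is to split into two cases according to whether $z$ is a multiple of $\nper$ or not, exploiting the fact that $\lambda_z$ is an $\nper$-th root of unity by construction.

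First I would handle the case $\mod(z,\nper)=0$. In this case, writing $z=k\nper$ for some $k\in\mathbb{Z}$, one has $\lambda_z = \exp(2\pi i k) = 1$, so every term in the sum equals $1$ and the total equals $\nper$.

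Next I would treat the case $\mod(z,\nper)\neq 0$, where $\lambda_z\neq 1$, so the standard geometric series formula applies:
\[
\sum_{h=0}^{\nper-1} \lambda_z^h \;=\; \frac{\lambda_z^{\nper} - 1}{\lambda_z - 1}.
\]
The key observation is that $\lambda_z^{\nper} = \exp(2\pi i z) = 1$ since $z$ is an integer, so the numerator vanishes while the denominator is nonzero, yielding $0$.

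There is no real obstacle here; the only ``subtlety'' is justifying $\lambda_z\neq 1$ in the second case, which follows directly from the fact that $\exp(2\pi i z/\nper)=1$ if and only if $z/\nper\in\mathbb{Z}$, i.e. if and only if $\mod(z,\nper)=0$. The lemma is a classical computation with roots of unity and the two short case analyses suffice.
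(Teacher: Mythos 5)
Your proof is correct and follows essentially the same route as the paper's: the case $\mod(z,\nper)=0$ gives $\lambda_z=1$ and the sum $\nper$, while in the other case the geometric series formula combined with $\lambda_z^{\nper}=1$ and $\lambda_z\neq 1$ yields $0$. Your explicit justification that $\lambda_z\neq 1$ exactly when $\mod(z,\nper)\neq 0$ is a small additional detail the paper leaves implicit, but the argument is the same.
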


\begin{proof}
Let $\bar{z}=\mod(z,\nper)$, so that $\bar{z}\in\{0,\dots,\nper-1\}$
and $\lambda_z=\exp(\tfrac{2\pi i}{\nper} z)=\exp(\tfrac{2\pi
  i}{\nper} \bar{z})$.  For $\bar{z}=0$, it is trivial that
$\lambda_z=1$ and so $\sum_{h=0}^{\nper-1} \lambda_{z}^h=\nper$.  For
$\bar{z}\geq 1$, since $\lambda_z\neq1$ and $\lambda_z^{\nper}=1$,
from the formula of the geometric we have that
$$
\sum_{h=0}^{\nper-1} \lambda_{z}^h = \frac{1-\lambda_{z}^\nper}{1-\lambda_{z}} = 0.
$$\qedhere
\end{proof}

Now, recall that the $(p,q)$-operator norm $\vert\vert  A\vert\vert _{p,q}$ of a complex
matrix $A\in \mathbb{C}^{M\times N}$ is defined as (note that, in the
present paper, we write $\vert\vert \cdot\vert\vert $ instead of $\vert\vert \cdot\vert\vert _2$)
\[
\vert\vert  A\vert\vert _{p,q} = 
\sup_{\vx \neq \vzero} 
\frac{\vert\vert  A \vx \vert\vert _{q} }{\vert\vert  \vx \vert\vert _{p} }.
\]
We underline the following property of $\vert\vert  A\vert\vert _{2,2}$:
\begin{itemize}
\item for any $a_1,a_2 \in \mathbb{C}$ and $A_1,A_2\in \mathbb{C}^{M\times N}$
$
\vert\vert  a_1A_1 + a_2A_2 \vert\vert _{2,2} \leq 
\vert a_1\vert \vert\vert  A_1 \vert\vert _{2,2} + \vert a_2\vert \vert\vert  A_2 \vert\vert _{2,2} 
$;
\item
by definition, $\vert\vert  A\vert\vert _{2,2}$ is the \emph{spectral norm} of $A$, that
is well known to be the square root of the largest eigenvalue of the
matrix $A A^*$ (or $A^* A$), where $A^*$ is the conjugate transpose of
$A$:
\[
\vert\vert  A\vert\vert _{2,2} = \sigma_{\max}( A)
=\sqrt{\lambda_{\max}(A A^*)} = \sqrt{\lambda_{\max}(A^*A)};
\] 
\item the H\"{o}lder's inequality for matrices reads
\[
\vert\vert  A \vert\vert _{2,2}^2 \leq \vert\vert  A \vert\vert _{1,1} \vert\vert  A \vert\vert _{\infty,\infty} =
\Big( \max_{j= 1, \ldots, N} \Big\{\sum_{i=1}^N \vert a_{ij}\vert \Big\}\Big) 
\Big( \max_{i= 1, \ldots, M} \Big\{\sum_{j=1}^M \vert a_{ij}\vert \Big\}\Big) .
\]
\end{itemize}
Note that, for a self-adjoint square matrix $ \bar{A} $, the singular
values are the absolute values of the eigenvalues.  Then, if $ \bar{A}
= A A^*$, we have
\begin{multline*}
\vert\vert  A\vert\vert _{2,2}^2 = \lambda_{\max}(A A^*) = \sigma_{\max}( \bar{A} ) = 
\vert\vert  \bar{A} \vert\vert _{2,2}  
\\
\leq \Big( \max_{j= 1, \ldots, N} \Big\{\sum_{i=1}^N \vert \bar{a}_{ij}\vert \Big\}\Big) 
\Big( \max_{i= 1, \ldots, M} \Big\{\sum_{j=1}^M \vert \bar{a}_{ij}\vert \Big\}\Big) =
\Big( \max_{j= 1, \ldots, N} \Big\{\sum_{i=1}^N \vert \bar{a}_{ij}\vert \Big\}\Big)^2,
\end{multline*}
that implies $\vert\vert  A\vert\vert _{2,2} \leq \max_{j= 1, \ldots, N} \{\sum_{i=1}^N \vert \bar{a}_{ij}\vert \}$.
\\


\end{document}